\g@addto@macro\normalsize{
  \setlength\abovedisplayskip{8pt}
  \setlength\belowdisplayskip{8pt}
  \setlength\abovedisplayshortskip{8pt}
  \setlength\belowdisplayshortskip{8pt}
  }
\setlist{nolistsep}
\newcommand\eqnitem[1][]{%
  \ifx\relax#1\relax  \item \else \item[#1] \fi
  \abovedisplayskip=0pt\abovedisplayshortskip=0pt~\vspace*{-\baselineskip}}
\newtheoremstyle{plain}{3mm}{3mm}{\slshape}{}{\bfseries}{.}{.5em}{}
\newtheoremstyle{definition}{2mm}{2mm}{}{}{\bfseries}{.}{.5em}{}
\theoremstyle{plain}
\newtheorem{theorem}{Theorem}
\newtheorem{lemma}[theorem]{Lemma}
\newtheorem{proposition}[theorem]{Proposition}
\newtheorem{corollary}[theorem]{Corollary}
\newtheorem{question}[theorem]{Question}
\theoremstyle{definition}
\newtheorem{definition}[theorem]{Definition}
\newtheorem{remark}[theorem]{Remark}
\newtheorem{examples}[theorem]{Examples}
\theoremstyle{plain}
\newcounter{MainTheoremCounter}
\newtheorem{Maintheorem}[MainTheoremCounter]{Theorem}
\theoremstyle{plain}
\newtheorem*{namedthm}{\namedthmname}
\newcounter{namedthm}
	\newenvironment{named}[2]
	{\def\namedthmname{#1}
	\refstepcounter{namedthm}
	\namedthm[#2]\def\@currentlabel{#1}}
	{\endnamedthm}
\numberwithin{equation}{section}
\definecolor{Color2}{rgb}{0.78, 0.11, 0.0}
\titlespacing*{\section}{0pt}{3.5ex plus 0ex minus 0ex}{1.5ex plus 0ex}
\titlespacing*{\subsection}{0pt}{3.5ex plus 0ex minus 0ex}{1.5ex plus 0ex}
\titlespacing*{\subsubsection}{0pt}{3.5ex plus 0ex minus 0ex}{1.5ex plus 0ex}
\newcommand{\Erdos}{Erd\H{o}s}
\newcommand{\supp}{{\normalfont\text{supp}}\,}
\newcommand{\eps}{\epsilon}
\newcommand{\N}{\mathbb{N}}
\newcommand{\Z}{\mathbb{Z}}
\newcommand{\R}{\mathbb{R}}
\newcommand{\Q}{\mathbb{Q}}
\newcommand{\Nz}{\N_0}
\newcommand{\defeq}{\vcentcolon=}
\newcommand\restr[2]{{ \left.\kern-\nulldelimiterspace #1 \right|_{#2}}}
\newcommand{\define}[1]{{\itshape #1}}
\renewcommand{\epsilon}{\varepsilon}
\renewcommand{\leq}{\leqslant}
\renewcommand{\geq}{\geqslant}
\renewcommand{\setminus}{\backslash}
\renewcommand{\P}{\mathbb{P}}
\definecolor{ggreen}{RGB}{53, 212, 62}
\newcommand{\al}{\alpha}
\newcommand{\ogamma}{\overline{\gamma}}
\newcommand{\NC}{\mathcal{N}}
\newcommand{\HC}{\mathcal{H}}
\newcommand{\DC}{\mathcal{D}}
\newcommand{\CC}{\mathcal{C}}
\newcommand{\dimh}{\dim_{\text{H}}}
\newcommand{\dimdh}{\dim_{\text{H}}}
\newcommand{\udimdh}{\overline{\dim}_{\text{H}}\hspace*{.1em}}
\newcommand{\ldimdh}{\underline{\dim}_{\text{H}}\hspace*{.1em}}
\newcommand{\dimdm}{\dim_{\text{M}}}
\newcommand{\udimdm}{\overline{\dim}_{\text{M}}\hspace*{.1em}}
\newcommand{\ldimdm}{\underline{\dim}_{\text{M}}\hspace*{.1em}}
\DeclareMathOperator{\diam}{diam}
\newcommand{\dimm}{\dim_{\text{M}}}
\newcommand{\udimm}{\overline{\dim}_{\text{M}}\hspace*{.1em}}
\newcommand{\ldimm}{\underline{\dim}_{\text{M}}\hspace*{.1em}}
\newcommand{\cmeas}{\mu}
\newcommand{\dyad}{\mathcal{P}}
\newcommand{\conv}{\mathop{\scalebox{1.5}{\raisebox{-0.2ex}{$\ast$}}}}
\newcommand{\dd}{\ \mathrm{d}}
\renewcommand{\Phi}{\mathfrak{R}}
\renewcommand{\Psi}{\mathfrak{L}}
\begin{document}

\title{\bfseries Additive and geometric transversality of fractal sets in the integers}
\author{Daniel Glasscock \and Joel Moreira \and Florian K.\ Richter}

\date{\small \today}
\maketitle

\begin{abstract}
By juxtaposing ideas from fractal geometry and dynamical systems, Furstenberg proposed a series of conjectures in the late 1960's that explore the relationship between digit expansions with respect to multiplicatively independent bases.
In this work, we introduce and study -- in the discrete context of the integers -- analogues of some of the notions and results surrounding Furstenberg's work.
In particular, we define a new class of fractal sets of integers that parallels the notion of $\times r$-invariant sets on the 1-torus and investigate the additive and geometric independence between two such fractal sets when they are structured with respect to multiplicatively independent bases.
Our main results in this direction parallel the works of Furstenberg, Hochman-Shmerkin, Shmerkin, Wu, and Lindenstrauss-Meiri-Peres and include:
\begin{itemize}
\item
a classification of all subsets of the positive integers that are simultaneously $\times r$- and $\times s$-invariant;
\item
integer analogues of two of Furstenberg's transversality conjectures pertaining to the dimensions of the intersection $A\cap B$ and the sumset $A+B$ of $\times r$- and $\times s$-invariant sets $A$ and $B$ when $r$ and $s$ are multiplicatively independent; and
\item
a description of the dimension of iterated sumsets $A+A+\cdots+A$ for any $\times r$-invariant set $A$.
\end{itemize}
We achieve these results by combining ideas from fractal geometry and ergodic theory to build a bridge between the continuous and discrete regimes.  
For the transversality results, we rely heavily on quantitative bounds on the $L^q$-dimensions of projections of restricted digit Cantor measures obtained recently by Shmerkin.
We end by outlining a number of open questions and directions regarding fractal subsets of the integers.
\end{abstract}

\small
\tableofcontents
\thispagestyle{empty}
\normalsize


\section{Introduction}
\label{sec_intro_top}

Number theorists in the first half of the 20th century were among the first to consider the degree to which base $2$ and base $3$ representations of real numbers are independent. 
An open conjecture attributed to Mahler \cite{mendesfrance1980} postulates, for example, that if $(a_n)_{n=1}^\infty \subseteq \{0,1\}$ is not eventually periodic, then at least one of the numbers $\sum_{n = 1}^\infty a_n 2^{-n}$ and $\sum_{n = 1}^\infty a_n 3^{-n}$ is transcendental. 
In a different vein, Cassels \cite{Cassels1959} and Schmidt \cite{schmidt_1960}, answering a question of Steinhaus about Cantor's middle thirds set $C$, proved that almost every number in $C/2$ (with respect to the $\log2/\log3$-dimensional Hausdorff measure) is normal to every base which is not a power of~$3$.
More general questions along these lines -- is almost every real number with respect to any continuous $\times 3$-invariant measure on $[0,1]$ normal to every base that is not a power of $3$ -- remain open, despite considerable partial progress \cite{host_1995,lindenstrauss_2001,hochman_shmerkin_2015}.

Studying the independence between different representations of real numbers remains an active area of research that brings together results and techniques from number theory, ergodic theory, and geometric measure theory.
Parallel investigations concerning representations of integers appear to be less developed but are no less natural or interesting.
It is the purpose of this paper to advance those investigations by demonstrating various forms of independence between different base representations in the non-negative integers.
One of the basic principles that underpin our results in this direction states the following:
\begin{quote}
\emph{If $r$ and $s$ are multiplicatively independent positive integers (meaning that the quantity $\log(r)/\log(s)$ is irrational) and $A$ and $B$ are subsets of the non-negative integers that are structured with respect to base-$r$ and base-$s$ representations, respectively, then $A$ and $B$ lie in general position. 
}\end{quote}
The following unresolved conjecture of Erd\H{o}s \cite{erdosmathmag} exemplifies this heuristic: \emph{for all $n \geq 9$, it is impossible the express the number $2^n$ in base $3$ using only the digits $0$ and $1$}; see \cite{dupuy_weirich_2016,lagariasternarypowersoftwo} for some recent progress.
Today, \Erdos{}' conjecture is understood as merely a special case of a much broader conjecture that asserts that any infinite set of natural numbers that has a ``simple'' description in base $r$ must have a ``complex'' description in base $s$ (see \cref{question_finiteintersection} in \cref{sec_small_intersections} for more details).
A related folklore conjecture in number theory \cite{oeisA146025} posits that $\{0, 1, 82000\}$ is exactly the set of non-negative integers that can be written in bases 2, 3, 4, and 5 using only the digits 0 and 1. A partial answer to this was given recently by Burrell and Yu \cite{burrell_yu_digit}, who proved that the set $A$ of non-negative integers that can be written in bases 4 and 5 using only the digits 0 and 1 satisfies $\big| A \cap [0,N] \big| \leq C_\eps N^{\eps}$ for all $\eps > 0$.

In this paper, we aim to 1) introduce a family of multiplicatively structured ``fractal'' subsets of the non-negative integers that naturally arise from digit restrictions, and 2) investigate the \emph{transversality}, or independence, between members of that family that are structured with respect to multiplicatively independent bases.  Our investigation is strongly motivated by the heuristic and conjectures mentioned above and by the recent resolutions of a pair of Furstenberg's conjectures concerning notions of geometric and additive transversality of fractal subsets of the reals.
Our results give integer parallels of those advancements in the reals, generalize the aforementioned result of Burrell and Yu, and make progress toward Erd\H{o}s' conjecture.

Before recounting the relevant history and stating our main results in full generality, we focus our attention on the special case of restricted digit Cantor sets in the non-negative integers.
Although restricted digit Cantor sets comprise only a small subclass of the sets that we consider, most of our results are already novel and interesting for this class. In this sense, the following section serves as a preview of our main results.

\subsection{Preview of the main results}

Let $\N = \{1, 2,3, \ldots\}$ and $\Nz = \{0, 1, 2, \ldots\}$. 
An \define{integer base-$r$ restricted digit Cantor set} is a set of non-negative integers whose base-$r$ expansion includes only digits from a fixed set $\mathcal{D}\subseteq \{0,1,\ldots, r-1\}$, i.e.,
\begin{align}\label{eqn_cantor_digit}
\left\{ \sum_{i = 0}^n a_i r^i \ \middle | \ n \in \Nz, \ a_0, \ldots, a_n \in \mathcal{D}\right\}.
\end{align}
The \emph{(mass) dimension} of such a set $A$ is $\dim A \defeq \log |\mathcal{D}| / \log r$, in the sense that $|A \cap [0,N)| = N^{\dim A + o(1)}$.
We discuss notions of dimension for more general subsets of the non-negative integers in the next section and define them precisely in \eqref{eqn_upper_mass_def} and \cref{def_mass_and_h_dimensions}.
While a number of arithmetic properties of integer restricted digit Cantor sets are well studied -- divisibility \cite{banksshparlinski}, distribution in arithmetic progressions \cite{erdos_mauduit_sarkozy_restricted_digit_cantor_sets, konyagin}, number of prime factors \cite{KMS}, character sums \cite{BCS} -- much less appears to be known about the relationship between such sets when they are structured with respect to different bases.

Let $r$ and $s$ be \emph{multiplicatively independent} positive integers, and let $A, B \subseteq \Nz$ be base-$r$ and base-$s$ restricted digit Cantor sets, respectively.
Under these assumptions, our results demonstrate that the sets $A$ and $B$ are transverse both in a geometric / probabilistic sense and in an additive combinatorial sense.  More precisely, the sets $A$ and $B$ are
\begin{itemize}
    \item geometrically / probabilistically in general position, in the sense that neither $A$ nor $B$ contains the other and in the sense that the size of $A \cap B$ is at most what is expected if $A$ and $B$ were independent random sets;
    \item additive combinatorially disjoint, in the sense that the cardinality of the sumset $A+B$ is nearly as large as possible, and hence there are only very few coincidences amongst the sums $a+b$ for $a\in A$ and $b\in B$.
\end{itemize}
Our main Theorems \ref{prop_discrete_furstenberg} and \ref{mainthm_integer_intersections} address the first point, while \cref{mainthm_integer_sumsets} addresses the second. We move now to formulate corollaries of those theorems that clearly demonstrate these notions of independence.

To describe all of the elements of a non-trivial base-$5$ restricted digit Cantor set in base 17, all 17 digits are required.  The following corollary of \cref{prop_discrete_furstenberg} generalizes this observation by showing that restricted digit Cantor structures with respect to multiplicatively independent bases are mutually incompatible. It also provides an integer analogue of a well-known theorem of Furstenberg; see \cref{thm_furstenberg} below.

\begin{named}{Corollary of \cref{prop_discrete_furstenberg}}{}
Under the assumptions on the sets $A$ and $B$ above, if $A \subseteq B$, then either $A = \{0\}$ or $B = \Nz$.
\end{named}

The finer question about the size of the intersection $A \cap B$ is addressed in \cref{mainthm_integer_intersections}.  For $N \in \N$, define $A_N = A \cap [0,N)$ and $B_N = B \cap [0,N)$.  The sets $A_N$ and $B_N$ would be probabilistically independent if $\big| A_N \cap B_N \big| / N = \big| A_N \big| \big| B_N \big| / N^2$.
Examples show that the sets $A$ and $B$ can be disjoint, even in the case that both $A$ and $B$ have a large set of allowed digits, so the inequality
\begin{align}
\label{eqn_finitistic_intersection_transversality}
    \frac{\big| A_N \cap B_N \big|}{N} \ \ll \ \frac{\big| A_N \big|}{N} \, \cdot \, \frac{\big| B_N \big|}{N}
\end{align}
for all $N$ large can be understood to demonstrate a type of asymptotic probabilistic transversality between the sets $A$ and $B$.  (As explained in the next section, such an inequality can also be interpreted as $A_N$ and $B_N$ being geometrically in general position.) \cref{mainthm_integer_intersections} shows that \eqref{eqn_finitistic_intersection_transversality} holds up to a factor of $N^\eps$; the precise extent to which \eqref{eqn_finitistic_intersection_transversality} holds remains open and is addressed briefly in \cref{sec_small_intersections}.

\begin{named}{Corollary of \cref{mainthm_integer_intersections}}{}
Under the assumptions on the sets $A$ and $B$ above, for all $\eps > 0$ and all sufficiently large $N$,
\begin{itemize}
    \item if $\dim A + \dim B \geq 1$, then
\[\frac{\big| A_N \cap B_N \big|}{N} \ \leq \ N^\eps \, \cdot \, \frac{\big| A_N \big|}{N} \, \cdot \, \frac{\big| B_N \big|}{N};\]
    \item if $\dim A + \dim B < 1$, then
\[\big| A_N \cap B_N \big|\ \leq \ N^\eps.\]
\end{itemize}
\end{named}

As an example application, let $C_{4,{\{0,1\}}}$ and $C_{5,{\{0,1\}}}$ be the sets of non-negative integers that have only digits $0$ and $1$ in their base $4$ and $5$ expansions, respectively. Since $\log 2 / \log 4 + \log 2 / \log 5 < 1$, it follows that $\big| C_{4,{\{0,1\}}} \cap C_{5,{\{0,1\}}} \big| = o(N^\eps)$, which recovers the theorem of Burrell and Yu's mentioned in the previous section.

If $X$ and $Y$ are finite sets of real numbers, then it is easy to check that
\begin{align*}
    |X| + |Y| - 1\leq |X+Y| \leq |X||Y|.
\end{align*}
Equality holds on the left if and only if $X$ and $Y$ are arithmetic progressions of the same step size.  When $|X+Y|$ is near this lower bound, inverse theorems in combinatorial number theory (e.g. \cite[Ch. 5]{tao_vu_book_2006}) provide additive structural information on the sets $X$ and $Y$. At the other end of the spectrum, equality holds on the right if and only if none of the sums $x+y$, with $x\in X$ and $y\in Y$, coincide. In this case, the sets $X$ and $Y$ lie in general position from an additive combinatorial point of view.

In this context, the inequality
\begin{align}
\label{eqn_sumset_transversality}
    \big| A_N + B_N \big| \ \gg \ \min \big( N, \, \big| A_N \big| \, \cdot \, \big| B_N \big| \big)
\end{align}
can be understood as demonstrating additive combinatorial transversality between the sets $A_N$ and $B_N$.  \cref{mainthm_integer_sumsets} shows that  \eqref{eqn_sumset_transversality} holds up to a factor of $N^\eps$; the extent to which \eqref{eqn_sumset_transversality} holds is unknown and is discussed briefly in \cref{sec_pos_den_for_sumsets}.

\begin{named}{Corollary of \cref{mainthm_integer_sumsets}}{}
Under the assumptions on the sets $A$ and $B$ above, for all $\eps > 0$ and all sufficiently large $N$,
\begin{align*}
    \big| A_N + B_N \big| \ \geq \ \min \big( N, \, \big| A_N \big| \, \cdot \, \big| B_N \big| \big) \big / N^\eps.
\end{align*}
\end{named}

Theorems \ref{prop_discrete_furstenberg}, \ref{mainthm_integer_intersections}, and \ref{mainthm_integer_sumsets} are more general than the corollaries above might suggest. 
Indeed, each result applies not only to restricted digit Cantor sets, but to a wider class of integer fractal sets called \emph{multiplicatively invariant} sets.
Moreover, each set can be replaced by a rounded image of itself under any affine transformation of $\R$.
Finally, in \cref{mainthm_integer_sumsets}, the sets $A$ and $B$ can be replaced by arbitrary subsets of $A$ and $B$, and set cardinality can be replaced with a notion of discrete Hausdorff content. 
We will introduce multiplicatively invariant sets in \cref{sec_intro_integers} and state our main results precisely there, after providing some historical context and motivation for them in the next section.

\subsection{History and context}

In the language of fractal geometry and dynamical systems, Furstenberg \cite{furstenbergdisjointness,furstenbergtransversality} established a number of conjectures and results that explore the relationship between multiplicative structures with respect to different bases in the real numbers. The notion of structure particularly relevant to this work is that of multiplicative invariance: a set $X\subseteq [0,1]$ is \define{$\times r$-invariant} if it is closed and $T_r X\subseteq X$, where $T_r\colon [0,1]\to[0,1]$ denotes the map
\[ T_r\colon x\mapsto rx\bmod1 . \]
We call a set $X\subseteq [0,1]$ \define{multiplicatively invariant} if it is $\times r$-invariant for some $r\geq 2$.

One of Furstenberg's first and most well-known results concerning multiplicatively invariant sets is the following theorem, the measure-theoretic analogue of which is the $\times 2$, $\times 3$ conjecture, a central open problem in ergodic theory.

\begin{theorem}[{\cite[Theorem 4.2]{furstenbergdisjointness}}]
\label{thm_furstenberg}
If $X\subseteq[0,1]$ is simultaneously $\times 2$- and $\times 3$-invariant, then either $X$ is finite or $X=[0,1]$.
\end{theorem}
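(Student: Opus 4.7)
The plan is to exploit the multiplicative independence of $2$ and $3$ (i.e.\ $\log 2/\log 3 \notin \Q$) to show that any point of $X$ with infinite $\langle T_2, T_3\rangle$-orbit must already have orbit dense in $[0,1]$; then closedness and invariance of $X$ will force $X = [0, 1]$.

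The first step is to locate a point in $X$ with infinite orbit. A rational $p/q \in X$ has finite orbit, since $T_2^a T_3^b(p/q)$ lies in the finite set $\{k/q' : q' \mid q\}$. So if $X$ contains an irrational, I take it as $\alpha$; otherwise $X \subseteq \Q/\Z$. In the latter case I would argue $X$ is in fact finite: the subgroup $\langle 2, 3\rangle_q \leq (\Z/q\Z)^*$ cannot have size bounded by a fixed $k$ as $q$ varies (else $q \mid \gcd(2^{k!}-1, 3^{k!}-1)$, a fixed integer), so if $X$ contained rationals with denominators $q_n \to \infty$, the growing orbits would accumulate at irrational points in the closed set $X$, contradicting $X \subseteq \Q/\Z$.

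The second step is the density statement: $\overline{\{2^a 3^b \alpha \bmod 1 : a, b \in \Nz\}} = [0,1]$ for irrational $\alpha$. Here the key is Kronecker's theorem: since $\log 2 /\log 3 \notin \Q$, the additive group $\{a \log 2 + b \log 3 : a, b \in \Z\}$ is dense in $\R$, and a shifting argument (adding large positive multiples of $\log 2$ and $\log 3$) upgrades this to $\epsilon$-density of the one-sided semigroup $\{a \log 2 + b \log 3 : a, b \in \Nz\}$ in some halfline $[L_0(\epsilon), \infty)$. To land an orbit point of $\alpha$ inside a prescribed interval $[\theta, \theta + \delta]$, I would first pass to an iterate $\alpha' = 2^{a'} 3^{b'} \alpha \bmod 1$ with $\alpha'$ very small, and then apply the semigroup density at height $\log(\theta/\alpha')$ to produce $(a, b) \in \Nz^2$ with $2^a 3^b \alpha' \in [\theta, \theta + \delta]$.

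The main obstacle is producing the small iterate $\alpha' \approx 0$. Compactness guarantees that the orbit has accumulation points in $[0, 1]$, but not necessarily at $0$. The fix uses the dynamics essentially: take two close orbit points $y_1, y_2$ with small nonzero difference $d$, iterate $T_2$ or $T_3$ to rescale $d$ into a fixed window, and extract a convergent subsequence to build orbit iterates converging to $0$. Multiplicative independence of $2$ and $3$ is indispensable here — with a single base, proper closed $\times 2$-invariant subsets of $[0, 1]$ (Cantor-type sets, finite cycles, sequences like $\{2^{-n}\}$, etc.) abound, and the density statement simply fails.
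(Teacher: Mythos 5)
The paper itself offers no proof of this statement -- it is quoted directly from Furstenberg's disjointness paper -- so your proposal has to stand on its own, and there it has a genuine gap at the decisive point. Your overall reduction is the right one: once $X$ (or the orbit closure of a single point) is known to contain nonzero elements arbitrarily close to $0$, the density of the semigroup $\{a\log 2+b\log 3 : a,b\in\Nz\}$ in a half-line lets you rescale such an element into any prescribed subinterval of $[0,1]$ without ever reducing mod $1$, and closedness gives $X=[0,1]$ (note this part needs no irrationality of the small elements). But the step you call ``the main obstacle'' is in fact the entire content of Furstenberg's theorem, and the mechanism you sketch does not deliver it. Taking two nearby points $y_1,y_2\in X$ with difference $d$ and applying $T_2^aT_3^b$ rescales the \emph{difference}: it produces pairs of points of $X$ at a prescribed small mutual distance, and, after passing to limits, statements about the closure of $X-X$ modulo $1$, not about $X$ itself. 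That is far weaker: having $\overline{X-X}$ equal to the whole circle is perfectly compatible with $X$ being a proper Cantor-type set (the middle-thirds Cantor set is $\times 3$-invariant and satisfies $C-C=[-1,1]$), and nothing in your outline converts ``two points of $X$ at distance about $c$'' -- which you already get for free from any accumulation point -- into elements of $X$ tending to $0$. Every known proof (Furstenberg's non-lacunary semigroup argument, Boshernitzan's elementary proof) spends its real effort exactly here, typically by passing to a minimal invariant subset, showing it is a finite cycle of rationals, and using return times to that periodic point along which the acting semigroup is still non-lacunary, so that the rescaled small difference can be re-attached to a point of $X$ that has not moved. Without an argument of this kind the proof does not close.

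A secondary leak: in your Step 1 you dismiss the case $X\subseteq\Q$ by asserting that growing rational orbits ``accumulate at irrational points,'' but an infinite set of rationals can accumulate at a rational (e.g.\ $1/q\to 0$ as $q\to\infty$), so that deduction is unjustified as written. It becomes moot, however, once the main lemma above is established, because the near-zero-plus-rescaling argument makes no use of irrationality and disposes of an infinite $X\subseteq\Q$ just as well.
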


The numbers $2$ and $3$ in \cref{thm_furstenberg} can be replaced by any pair of multiplicatively independent positive integers $r$ and $s$.
Following \cref{thm_furstenberg}, Furstenberg conjectured that if $X,Y\subseteq[0,1]$ are $\times r$- and $\times s$-invariant, respectively, then $X$ and $Y$ are \emph{transverse} in more than one sense, some of which are made precise below. While some of Furstenberg's ``transversality conjectures'' remain open, two of them were resolved recently by Hochman and Shmerkin \cite{localentropy}, Shmerkin \cite{shmerkin}, and, independently, Wu \cite{wu}.  Both of these conjectures are particularly relevant to this work, so we will expound on them further now.

In Euclidean geometry, linear subspaces $U, V\subseteq \R^d$ are said to be \define{in general position} (or \define{transverse}) if
\begin{align*}
\dim(U \cap V)\,&=\,\max\big(0, \dim U+\dim V - d \big), \text{ and} \\
\dim(U+V)\,&=\,\min\big(\dim U+\dim V, \ d \big).
\end{align*}
By analogy, Furstenberg conjectured\footnote{The intersection conjecture \eqref{furstenberg_intersection_transversality} is one of several conjectures stated in  \cite{furstenbergtransversality}.  The sumset conjecture \eqref{furstenberg_sumset_transversality} does not, as far as we are aware, appear by Furstenberg in print, but it was known to have originated with him.} that if $r$ and $s$ are multiplicatively independent and $X$ and $Y$ are $\times r$- and $\times s$-invariant subsets of $[0,1]$, then
\begin{align}
    \label{furstenberg_intersection_transversality}
    \dimh(  X \cap Y ) \,&\leq\, \max{}\big(0, \dimh X + \dimh Y - 1 \big), \text{ and}\\
    \label{furstenberg_sumset_transversality}
    \dimh(  X + Y ) \,&=\, \min{}\big(\dimh X + \dimh Y, \ 1 \big),
\end{align}
where $\dimh$ denotes the Hausdorff dimension.

With no assumptions on the sets $X, Y \subseteq [0,1]$, it is not difficult to find examples for which neither \eqref{furstenberg_intersection_transversality} nor \eqref{furstenberg_sumset_transversality} hold. Nevertheless, it is a consequence of Marstrand's projection and slicing theorems\footnote{Marstrand's slicing and projection theorems originally concerns orthogonal projections of subsets of the plane and intersections with lines. Images of the Cartesian product $X \times Y$ under orthogonal projections are, up to affine transformations which preserve dimension, sumsets of the form $\lambda X +  \eta Y$, while intersections of $X \times Y$ with lines are affinely equivalent to sets of the form $\lambda X \cap (\eta Y + \sigma)$. Also note that for sufficiently regular sets $X$ and $Y$, $\dimh (X \times Y) = \dimh X + \dimh Y$; see, for example, \cite[Corollary 8.11]{mattila_geometry_of_sets_1995}.} that for all Borel sets $X$ and $Y$, the typical dilated sets $\lambda X$ and $\eta Y$ are transverse in the sense of \eqref{furstenberg_intersection_transversality} and \eqref{furstenberg_sumset_transversality}.

\begin{theorem}[{\cite[Theorems II and III]{Marstrand_1954}}]
Let $X$ and $Y$ be Borel subsets of $[0,1]$. For Lebesgue-a.e. $\lambda, \eta, \sigma \in \R$,
\begin{align}
    \label{marstrand_slicing_result_intro}
    \dimh\big( \lambda X \cap (\eta Y + \sigma) \big) &\leq \max{}\big(0, \ \dimh (X \times Y) - 1 \big), \text{ and}\\
    \label{marstrand_result_intro}
    \dimh\big( \lambda X +  \eta Y \big) &= \min{}\big(\dimh (X \times Y) , \ 1 \big).
\end{align}
\end{theorem}

In this context, Furstenberg's conjectures in \eqref{furstenberg_intersection_transversality} and \eqref{furstenberg_sumset_transversality} say that the multiplicative invariance of the sets $X$ and $Y$ can be leveraged to change the result in Marstrand's theorem from concerning the typical sets $\lambda X \cap ( \eta Y + \sigma)$ and $\lambda X + \eta Y$ to concerning the specific ones $X \cap Y$ and $X + Y$.  In fact, Furstenberg conjectured that for $\times r$- and $\times s$-invariant sets $X$ and $Y$, the inequality in \eqref{marstrand_slicing_result_intro} and equality in \eqref{marstrand_result_intro} hold for \emph{all} non-zero $\lambda$ and $\eta$ and all $\sigma$.  Hochman and Shmerkin resolved the sumset conjecture by proving a stronger result for multiplicatively invariant measures, and several years later Shmerkin \cite{shmerkin} and Wu \cite{wu} independently resolved the intersection conjecture.  (These works resolved both conjectures for classes of attractors of iterated function systems, too.) Several more recent works offer new proofs of \eqref{furstenberg_intersection_transversality} and \eqref{furstenberg_sumset_transversality}; see, for example, \cite{austin_proof_of_furstenberg_2020, yu_improvement_to_furstenberg_arxiv,jordan_rapaport_2021,GMR_new_proof_2020}.

\begin{theorem}[{\cite{shmerkin,wu} and \cite{localentropy}}]
\label{thm_HS_localentropy_and_intersections}
Let $r$ and $s$ be multiplicatively independent positive integers, and let $X, Y \subseteq [0,1]$ be $\times r$- and $\times s$-invariant sets, respectively. For all $\lambda, \eta \in \R \setminus \{0\}$ and all $\sigma \in \R$,
\begin{align}
\label{eqn_furstenberg_intersection}
     \udimm\big( \lambda X \cap  (\eta Y + \sigma) \big) &\leq \max{} \big(0, \ \dimh X + \dimh Y - 1 \big), \text{ and} \\
\label{eqn_in_thm_HS_localentropy}
     \dimh\big( \lambda X +  \eta Y \big) &= \min{}\big(\dimh X + \dimh Y, \ 1 \big),
\end{align}
where $\udimm$ denotes the upper Minkowski dimension.
\end{theorem}

The upper bound on the dimension of fibers in \eqref{eqn_furstenberg_intersection} suffices to give the lower bound on the dimension of sumsets necessary for \eqref{eqn_in_thm_HS_localentropy}, as was observed in \cite{Furstenberg_2008}; for elaboration on the connection between the two, see the discussion following Conjecture 1.2 in \cite{localentropy}.   Shmerkin's main result in \cite{shmerkin}, which concerns the decay of $L^q$ norms of certain self-similar measures of dynamical origin, proves \eqref{eqn_furstenberg_intersection} by controlling the Frostman exponent of images of regular measures under projections. We derive a number of our main theorems from Shmerkin's work, which we elaborate on further in \cref{sec_frost_exp_projection_theorem}.

In an effort to better understand the role that the multiplicative independence between the bases plays in the sumset theorem, it is natural to ask about the sum of sets that are all structured with respect to the same base $r$.  Taking $X \subseteq [0,1]$ to be those numbers that can be written in decimal with only the digits 0, 1, and 2, we see that the equality in \eqref{furstenberg_sumset_transversality} need not hold:
\[\frac{\log 5}{\log 10} = \dimh (X+X) < 2 \dimh X = \frac{2\log 3}{\log 10}.\]
Nevertheless, it is a consequence of the following theorem of Lindenstrauss, Meiri, and Peres that the dimension of the iterated sumset $X+ \cdots + X$ approaches 1 as the number of summands increases.

\begin{theorem}[{\cite[Corollary 1.2]{lindenstrauss_meiri_peres_1999}}]
\label{LMP_theorem}
Let $(X_i)_{i =1}^\infty$ be a sequence of $\times r$-invariant subsets of $[0,1]$.  If $\sum_{i=1}^\infty \dimdh X_i / \allowbreak | \log \dimdh X_i |$ diverges, then
\begin{align*}
    \lim_{n \to \infty} \dimh \big( X_1 + \cdots + X_n \big) = 1.
\end{align*}
\end{theorem}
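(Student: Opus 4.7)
The plan is to reduce \cref{LMP_theorem} to a single quantitative ``sumset gain'' inequality for one $\times r$-invariant summand, then iterate. Concretely, I would aim to prove that there is a constant $c = c(r) > 0$ such that for every closed $E \subseteq \R$ and every $\times r$-invariant $Y \subseteq [0,1]$ with $\dimh Y > 0$,
\begin{equation}\label{eq:lmp_gain}
1 - \dimh(E+Y) \,\leq\, (1 - \dimh E)\Bigl(1 - c\,\dimh Y/|\log \dimh Y|\Bigr).
\end{equation}
Applying \eqref{eq:lmp_gain} inductively with $E = X_1 + \cdots + X_{n-1}$ and $Y = X_n$ yields
$$1 - \dimh(X_1 + \cdots + X_n) \,\leq\, \prod_{i=1}^n\bigl(1 - c\,\dimh X_i / |\log \dimh X_i|\bigr) \,\leq\, \exp\!\Bigl(-c\sum_{i=1}^n \dimh X_i / |\log \dimh X_i|\Bigr),$$
which tends to $0$ by the divergence hypothesis, giving $\dimh(X_1 + \cdots + X_n) \to 1$.

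To establish \eqref{eq:lmp_gain}, I would work at finite $r$-adic scales. Let $\mathcal{D}_n$ be the partition of $[0,1]$ into $r$-adic intervals of length $r^{-n}$ and $N_n(Z) \defeq \#\{I \in \mathcal{D}_n : I \cap Z \ne \emptyset\}$; for $\times r$-invariant $Z$, box and Hausdorff dimensions coincide (a standard consequence of subadditivity), so $\dimh Z = \lim_n \log N_n(Z)/(n \log r)$. Writing $\alpha = \dimh E$, $\beta = \dimh Y$ and encoding the families of $r^{-n}$-cylinders meeting $E$ and $Y$ as integer subsets $A, B \subseteq \{0, 1, \ldots, r^n-1\}$ of respective sizes $r^{n\alpha + o(n)}$ and $r^{n\beta + o(n)}$, the inequality \eqref{eq:lmp_gain} reduces to a sumset estimate
$$|A+B| \,\geq\, |A| \cdot r^{(1-\alpha)\,c\beta\,n / |\log\beta| - o(n)},$$
which must exploit the one-sided shift-invariance of $B$ inherited from $\times r$-invariance of $Y$, while making no structural assumption on $A$.

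The most natural path to this sumset bound passes through $T_r$-invariant measures. Pick a $T_r$-invariant Borel probability measure $\nu$ on $Y$ with $\dim \nu = \beta$, which exists by the variational principle applied to the shift on the symbolic coding of $Y$, and let $\mu$ be any probability measure on $E$ approximating $\dimh E$. The sumset inequality becomes a convolution entropy gain
$$H(\mu * \nu, \mathcal{D}_n) \,\geq\, H(\mu, \mathcal{D}_n) + c\,\bigl(\beta / |\log \beta|\bigr)\bigl(n \log r - H(\mu, \mathcal{D}_n)\bigr) - o(n),$$
which, upon dividing by $n \log r$ and letting $n \to \infty$, yields \eqref{eq:lmp_gain}. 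I would prove this by a scale-by-scale analysis: the $T_r$-invariance of $\nu$ forces it to deposit a uniform amount of entropy into each of the $n$ $r$-adic scales, and summing the resulting conditional entropy gains for $\mu * \nu$ across scales produces the desired increment on the right-hand side.

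The main obstacle is obtaining the sharp logarithmic factor $|\log\beta|^{-1}$ in \eqref{eq:lmp_gain}. A cruder gain of the form $\dimh(E+Y) \geq \alpha + c\beta(1-\alpha)$—which already follows from elementary convolution-entropy inequalities on finite abelian groups—would only suffice under the strictly stronger hypothesis $\sum \dimh X_i = \infty$. The logarithmic refinement requires careful multi-scale bookkeeping: one must track across the roughly $|\log\beta|$ scales that separate $\beta$ from $1$ where $\nu$ actually deposits its entropy, and ensure that each deposit translates into an honest conditional-entropy gain for $\mu * \nu$. Coordinating these scale-by-scale gains into a single global inequality is the technical crux of the LMP argument and is what distinguishes it from a Marstrand-type projection statement.
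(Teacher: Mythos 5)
You should first be aware that the paper does not prove this statement at all: it is imported verbatim from Lindenstrauss--Meiri--Peres and used as a black box (what the paper proves is its integer analogue, \cref{theorem_lmp_analogue}, by reduction to this cited result). So the only meaningful comparison is with the original LMP argument, and your outline is essentially that argument's skeleton: a single-convolution entropy gain carrying the factor $\dimh Y/|\log \dimh Y|$, iterated to give the product bound $1-\dimh(X_1+\cdots+X_n)\leq\exp\bigl(-c\sum_i \dimh X_i/|\log\dimh X_i|\bigr)$, with the divergence hypothesis finishing the job. Within the reduction itself there is one imprecision worth flagging: a lower bound on $H(\mu*\nu,\mathcal{D}_n)$ only bounds covering numbers, i.e.\ the \emph{box} dimension of $E+Y$, so your gain inequality with $\dimh(E+Y)$ on the left for an arbitrary closed $E$ does not follow from the entropy statement. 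In the intended application this is repairable because $X_1+\cdots+X_n$ reduced mod $1$ is again $\times r$-invariant, so box and Hausdorff dimension coincide by \cref{thm:furstenbergdimsofsubshift}; this is exactly the content of \cref{lmp_remark}, and your write-up should either invoke it or run the whole iteration at the level of covering numbers and invariant sumsets.

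The genuine gap is that the single-step inequality with the $1/|\log\beta|$ factor \emph{is} the theorem: it is the entire technical content of the LMP paper, and your proposal does not prove it. The sentence ``the $T_r$-invariance of $\nu$ forces it to deposit a uniform amount of entropy into each of the $n$ $r$-adic scales, and summing the resulting conditional entropy gains\dots'' is precisely the step that requires the real work: invariance gives $\nu$ an \emph{average} of $\beta\log r$ entropy per scale, but converting this into a conditional-entropy gain for $\mu*\nu$ at scales where $\mu$ is badly distributed requires grouping scales into blocks of length about $\log(1/\beta)$ and a nontrivial convolution lemma on $\Z/r^k\Z$; that blocking is where the logarithmic loss comes from, and none of it is supplied here. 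Moreover, the parenthetical claim that the cruder gain $\dimh(E+Y)\geq\alpha+c\beta(1-\alpha)$ ``already follows from elementary convolution-entropy inequalities on finite abelian groups'' is not right: such inequalities do not see the invariance of $Y$ at all, and without it the bound fails outright (take $A$ and $B$ both equal to the integer interval $\{0,\ldots,\lfloor r^{n/2}\rfloor\}$, so $|A+B|\ll|A|$ while the claimed bound demands a gain of size $r^{cn/4}$); and even granting invariance, a linear-in-$\beta$ gain with an absolute constant would, upon iteration, prove the conclusion under the weaker hypothesis $\sum_i\dimh X_i=\infty$, which the sharpness examples in the LMP paper rule out. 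As it stands, then, the proposal is a plausible road map that reduces the statement to its hardest ingredient, not a proof of it.
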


This theorem demonstrates that the structure captured by multiplicative invariance sits transversely to the additive structure captured by additive closure: because the sumset $X_1 + \cdots + X_n$ fills out the entire space (with respect to the Hausdorff dimension), the sets $X_i$ are not contained in an additively closed set of dimension less than 1. Dimension growth of iterated sumsets under weaker regularity conditions was studied recently in \cite{fraser_howroyd_yu_2019}.\\

While there is a strong historical precedent for the study of $\times r$-invariant subsets of the unit interval, less seems to be known in the integer and $p$-adic settings, despite the fact that many of the same objects and questions can be naturally formulated there.

Furstenberg \cite{furstenbergtransversality}, assuming a positive answer to one of his yet-unresolved transversality conjectures in the reals, drew a connection between the real and integer regimes by showing that given any finite word from the alphabet $\{0, \ldots, 9\}$, the decimal expansion of the number $2^n$ contains that word provided that $n$ is sufficiently large. 
This (conditionally) solves an analogue of Erd\H{o}s' conjecture mentioned earlier.

The folklore conjecture mentioned in the second paragraph in \cref{sec_intro_top} is profitably understood in terms of intersections of restricted digit Cantor sets and, as such, evokes the real transversality conjecture of Furstenberg in \eqref{furstenberg_intersection_transversality}.  Burrell and Yu's \cite{burrell_yu_digit} results toward a resolution of this conjecture rely heavily on Yu's work in \cite{yu_improvement_to_furstenberg_arxiv} on improvements to Shmerkin and Wu's resolution of Furstenberg's intersection conjecture.
Drawing on results in \cite{yu_improvement_to_furstenberg_arxiv}, Yu \cite{2020arXiv200405926Y} also shows that there are few solutions to the equation $x+y=z$ in which the variables come from different integer restricted digit Cantor sets.  Using projection theorems and Newhouse’s gap lemma, Yu \cite{yu_fractal_projections_arxiv} furthermore proves that there are infinitely many sums of powers of five that can be written as sums of powers of three and four.

The first author proved in \cite[Theorem 1.4]{glasscock_marstrand_for_integers} a discrete analogue of Marstrand's projection theorem, building on the work of Lima and Moreira in \cite{limamoreira}: \emph{for all $A, B \subseteq \Z$ satisfying a necessary dimension condition}\footnote{\label{dim_condition_footnote}The condition is that the upper mass dimension of $A \times B$ is equal to the upper counting dimension of $A \times B$. The upper mass dimension is defined in \eqref{eqn_upper_mass_def}, while the upper counting dimension of $A \times B$ is equal to $\limsup_{N \to \infty} \max_{z \in \Z^2} \log \big| (A \times B ) \cap (z + \{-N, \ldots, N\}^2) \big|\big/ \log N$.} \emph{and for Lebesgue-a.e. $(\lambda,\eta) \in \R^2$,}
\begin{align}
\label{marstrand_result_intro_integers}
    \udimdm \big( \lfloor \lambda A +  \eta B \rfloor \big) = \min{}\big( \udimm (A \times B) , \ 1 \big),
\end{align}
where the upper mass dimension, $\udimdm$, is defined in \eqref{eqn_upper_mass_def} below, $\lfloor \, \cdot \, \rfloor$ denotes the floor function, and $\lfloor \lambda A + \eta B \rfloor\coloneqq \big \{\lfloor \lambda a +\eta b\rfloor \ \big | \ a\in A,\, b\in B \big \}$. 
It is reasonable to conjecture by analogy that if $A$ and $B$ are restricted digit Cantor sets with respect to multiplicatively independent bases, then \eqref{marstrand_result_intro_integers} would hold for \emph{all} non-zero $\lambda, \eta \in \R$.  We show that this is indeed the case in \cref{mainthm_integer_sumsets} and its generalizations.

\subsection{Main results}
\label{sec_intro_integers}

Our primary goals for this article are to introduce the study of multiplicatively invariant subsets of the non-negative integers and to bring transversality results in the integers more in line with those in the reals by giving full-fledged analogues of Theorems \ref{thm_furstenberg}, \ref{thm_HS_localentropy_and_intersections}, and \ref{LMP_theorem}.  To that end, we begin by introducing an analogue of a $\times r$-invariant set for the integers.

Let $r \in \N$, $r \geq 2$. Define $\Phi_r\colon \Nz\to \Nz$ and $\Psi_r\colon \Nz\to \Nz$ by
\[
\Phi_r\colon n\mapsto \lfloor n/r\rfloor\qquad\text{and}\qquad
\Psi_r\colon n\mapsto n-r^k\lfloor n/r^k\rfloor,
\]
where $k=\lfloor\log n/\log r \rfloor$ when $n \geq 1$. The maps $\Phi_r$ and $\Psi_r$ are best understood using the base-$r$ representations of non-negative integers: if $n=a_k r^k + \cdots + a_1 r + a_0$, $a_k \neq 0$, is the base-$r$ representation of $n$, then
\[
\Phi_r(n)\,=\,a_{k}r^{k-1} + \cdots + a_2 r + a_1
\qquad\text{and}\qquad
\Psi_r(n)\,=\,a_{k-1}r^{k-1} + \cdots + a_1 r + a_{0}.
\]
In other words, the map $\Phi_r$ ``forgets'' the least significant digit (the right-most digit, hence the letter $\Phi$) while the map $\Psi_r$ ``forgets'' the most significant digit (the left-most digit, hence the letter $\Psi$) in base $r$. For example, in base $r=10$ we have $\Phi_{10}(71393)=7139$ and $\Psi_{10}(71393)=1393$.

\begin{definition}
\label{def_xr_invariant_integers}
A set $A\subseteq\Nz$ is \define{$\times r$-invariant} if $\Phi_r(A)\subseteq A$ and $\Psi_r(A)\subseteq A$.
We call $A\subseteq\Nz$ \define{multiplicatively invariant} if it is $\times r$-invariant for some $r\geq 2$.
\end{definition}

It may be helpful to note that a $\times r$-invariant set $A$ need not satisfy $r A \subseteq A$ and that there are examples showing that the condition $r A \subseteq A$ does not yield a natural integer analogue of the notion of $\times r$-invariance on the unit interval; see \cref{section_counterexample}.

There are many natural examples of $\times r$-invariant subsets of $\Nz$. Integer base-$r$ restricted digit Cantor sets, defined in \eqref{eqn_cantor_digit}, are clearly $\times r$-invariant. More general examples arise from symbolic subshifts of $\{0,1,\ldots,r-1\}^{\Nz}$. For any closed and left-shift-invariant set $\Sigma\subseteq\{0,1,\ldots,r-1\}^{\Nz}$, the corresponding \define{language set} is defined by
\[
\mathcal{L}(\Sigma)=\big\{ w_0w_1 \cdots w_{k} \ \big| \ w_0w_1\cdots \in \Sigma,~k\in\Nz \big\}.
\]
Any language set naturally embeds in two ways into the non-negative integers as
\begin{gather*}
\big\{w_{0} r^{k} + \cdots + w_{k-1} r + w_k \ \big | \ w_0w_1\cdots w_{k} \in \mathcal{L}(\Sigma) \big\},\\
\big\{w_{k} r^{k} + \cdots + w_{1} r + w_0 \ \big | \ w_0w_1\cdots w_{k} \in \mathcal{L}(\Sigma) \big\},
\end{gather*}
yielding sets that are $\times r$-invariant.  For more details, see \cref{def_language-set} and \cref{prop_correspondence_integers_subshifts}, and for more such examples, see \cref{examples_of_times_r_invariant_sets}. As yet another source of $\times r$-invariant subsets of the non-negative integers, we note that if $X$ is a $\times r$-invariant subset of $[0,1]$, then the set
\[
\bigcup_{k\in\Nz}\big\{\lfloor r^kx \rfloor \ \big | \  x\in X \big\}
\]
can be shown to be $\times r$-invariant; see \cref{sec_connection_01-interval} for more details.

Our first result in the integer setting is an analogue of \cref{thm_furstenberg} that demonstrates that there are no non-trivial examples of sets which exhibit structure simultaneously with respect to multiplicatively independent bases.  \cref{prop_discrete_furstenberg} is proved in \cref{sec_proof_of_thm_B} by expanding on the well-known argument that all non-zero decimal digits appear as the most significant digit of $2^n$. We define $[X]_\delta \defeq \{z\in \R \ | \ \exists x\in X~\text{with}~|z - x| \leq \delta \}$ to be the $\delta$-neighborhood of the set $X$.

\begin{Maintheorem}
\label{prop_discrete_furstenberg}
Let $r$ and $s$ be multiplicatively independent positive integers, and let $A, B \subseteq \Nz$ be $\times r$- and $\times s$-invariant sets, respectively. If $\lambda, \eta > 0$, $\sigma, \tau \in \R$ and $\delta > 0$ are such that
\begin{align}
\label{eqn_affine_falls_in_affine_neighborhood}
    \lambda A + \tau \subseteq \big[ \eta B + \sigma \big]_\delta,
\end{align}
then either $A$ is finite or $B = \Nz$.
\end{Maintheorem}

To measure the size of multiplicatively invariant subsets of $\Nz$ and their sumsets and Cartesian products, we make use of two notions of dimension in the integers that parallel the classical Minkowski and Hausdorff dimensions from geometric measure theory. The discrete analogue of the lower and upper Minkowski dimension are the \emph{lower and upper mass dimension}, defined for $A\subseteq\Nz^d$ as
\begin{align}
    \notag
    \ldimdm A &\,=\, \liminf_{N \to \infty}\, \frac{\log |A \cap [0,N)^d|}{ \log N}=\sup \left\{\gamma \geq 0 \ \middle| \ \liminf_{N \to \infty} \frac{\big|A \cap [0,N)^d\big|}{ N^\gamma} > 0 \right\},\\
    \label{eqn_upper_mass_def}
    \udimdm A &\,=\, \limsup_{N \to \infty}\, \frac{\log |A \cap [0,N)^d|}{ \log N}=\sup \left\{\gamma \geq 0 \ \middle| \ \limsup_{N \to \infty} \frac{\big|A \cap [0,N)^d\big|}{ N^\gamma} > 0 \right\}.
\end{align}
Whenever $\ldimdm A = \udimdm A$, we say that the mass dimension of $A$ exists and denote it by $\dimdm A$.
In analogy to the way in which the classical Hausdorff dimension can be defined in terms of the unlimited Hausdorff content (see \cref{sec_continuous_fractal_geometry}), the \emph{lower and upper discrete Hausdorff dimension} of $A$ are defined to be
\begin{align*}
    \ldimdh A &\,=\, \sup \left\{\gamma \geq 0 \ \middle| \ \liminf_{N \to \infty} \frac{\HC_{\geq 1}^\gamma \big(A \cap [0,N)^d \big)}{N^\gamma} > 0 \right\},\\
    \udimdh A &\,=\, \sup \left\{\gamma \geq 0 \ \middle| \ \limsup_{N \to \infty} \frac{\HC_{\geq 1}^\gamma \big(A \cap [0,N)^d \big)}{N^\gamma} > 0 \right\},
\end{align*}
where the \emph{discrete $\gamma$-Hausdorff content}, $\HC_{\geq 1}^\gamma$, is defined in \cref{def_hausdorff_content}.  If these two quantities agree then we say that the \emph{discrete Hausdorff dimension} of $A$, $\dimdh A$, exists and is equal to this quantity. 

The mass dimension and the upper discrete Hausdorff dimension are systematically studied along with a host of other discrete dimensions in \cite{barlow_taylor_92}.
We discuss these notions of dimension and the interplay between them at greater length in \cref{sec_discrete_dimensions}.
For the current discussion, it is helpful to know that
\begin{align*}
    \ldimdh \leq \ldimdm \leq \udimdm \qquad \text{ and } \qquad \ldimdh \leq \udimdh \leq \udimdm,
\end{align*}
and that for any $\times r$-invariant set $A\subseteq\Nz$, both the mass dimension $\dimdm A$ and the discrete Hausdorff dimension $\dimdh A$ exist and coincide; see \cref{lemma_dimensions_coincide_for_invariant_sets}.

Our next main results in the integer setting demonstrate geometric and additive combinatorial transversality between $\times r$- and $\times s$-invariant subsets of integers.
Thus, these results parallel the results of Hochman and Shmerkin, Shmerkin, and Wu by verifying analogues of Furstenberg's intersection and sumset conjectures.

Let $r$ and $s$ be multiplicatively independent positive integers, and let $A, B \subseteq \Nz$ be $\times r$- and $\times s$-invariant sets, respectively. Define $\ogamma = \max \big(0, \ \dimdh A + \dimdh B - 1 \big)$. (In what follows, recall the use of the floor notation $\lfloor \ \cdot \ \rfloor$ described just after \eqref{marstrand_result_intro_integers} above.)

\begin{Maintheorem}
\label{mainthm_integer_intersections}
For all $\eps, \lambda, \eta > 0$, $\sigma, \tau \in \R$, and sufficiently large $N \in \N$,
\[\big| \lfloor \lambda \big( A \cap [0,N) \big) + \tau \rfloor \cap \lfloor \eta \big( B \cap [0,N) \big) + \sigma \rfloor \big| \leq N^{\ogamma + \eps}.\]
In particular, for all $\lambda, \eta > 0$ and $\sigma, \tau \in \R$,
\[\udimdm \big( \lfloor \lambda A + \tau\rfloor \cap \lfloor \eta B + \sigma \rfloor \big)  \leq \max \big(0, \ \dimdh A + \dimdh B - 1 \big).\]
\end{Maintheorem}

The upper bound on the dimension of the set $\lfloor \lambda A + \tau \rfloor \cap \lfloor \eta B + \sigma \rfloor$ in \cref{mainthm_integer_intersections} provides an analogue in the integers to the result of Shmerkin and Wu in \eqref{eqn_furstenberg_intersection} in the reals. \cref{mainthm_integer_intersections} will be derived as a corollary of \cref{theorem_main_discrete_intersections_v2}, a stronger result proved in \cref{sec_intersections_section} in which we demonstrate that the upper bound on $\big| \lfloor \lambda \big( A \cap [0,N) \big) + \tau \rfloor \cap \lfloor \eta \big( B \cap [0,N) \big) + \sigma \rfloor \big|$ is uniform over a compact set of scaling parameters.

Our next theorem gives an integer analogue of the result of Hochman and Shmerkin in \eqref{eqn_in_thm_HS_localentropy}.  
We bound both the cardinality and the discrete Hausdorff content of the set $\lfloor \lambda A' + \eta B' \rfloor$ from below in terms of the cardinality and the discrete Hausdorff content of the product set $A' \times B'$, where $A'$ and $B'$ are arbitrary subsets of $A$ and $B$.
Note that $\dimdh (A \times B) = \dimdh A + \dimdh B$ holds because $A$ and $B$ are multiplicatively invariant (see \cref{cor_dim_of_products_of_invariant_sets}), but this equality need not hold for arbitrary subsets $A' \subseteq A$ and $B' \subseteq B$. Hence, the role played by $\dimdh A + \dimdh B$ in \cref{mainthm_integer_intersections} is now played by $\dimdh (A' \times B')$ in this next result.

\begin{Maintheorem}
\label{mainthm_integer_sumsets}
For all $\eps, \lambda, \eta > 0$, $\gamma \in [0,1]$, sufficiently large $N$  and non-empty $A' \subseteq A \cap [0,N)$, $B' \subseteq B \cap [0,N)$,
\begin{align*}
   \big|  \big\lfloor \lambda A' + \eta B' \big\rfloor \big| &\geq \frac{| A' \times B' |}{N^{\ogamma + \eps}}, \text{ and} \\
   \frac{\HC_{\geq 1}^{\gamma} \big( \big\lfloor \lambda A' + \eta B' \big\rfloor \big)}{N^{\gamma}} &\gg_{\eps,\lambda,\eta, \gamma} \frac{\HC_{\geq 1}^{\gamma + \ogamma + \eps} \big( A' \times B' \big)}{N^{\gamma+\ogamma + \eps}}.
\end{align*}
In particular, for all $\dim \in \{\ldimdm, \udimdm, \ldimdh, \udimdh\}$,
\[    \dim \big( \lfloor \lambda A + \eta B \rfloor \big) = \min \big(1, \dim (A \times B) \big),\]
and, if $\dimdh A+ \dimdh B \leq 1$, then for all $A' \subseteq A$, $B' \subseteq B$,
\[\dim \big( \lfloor \lambda A' + \eta B' \rfloor \big) = \dim \big( A' \times B' \big).\]
\end{Maintheorem}

Just as with \cref{mainthm_integer_intersections}, we derive \cref{mainthm_integer_sumsets} from a more general result, \cref{theorem_main_discrete_sums_v2} proved in \cref{sec_sumsintegers}, which demonstrates that the inequalities in \cref{mainthm_integer_sumsets} hold uniformly over the scaling parameters $\lambda$ and $\eta$. 
Both \cref{mainthm_integer_intersections} and \cref{mainthm_integer_sumsets} are proved by combing the uniformity in Shmerkin's main theorem in \cite{shmerkin} with tools from ergodic theory in an appropriate symbolic dynamic setting.
It remains an interesting question whether there is a direct way of deriving \cref{mainthm_integer_sumsets} from \cref{mainthm_integer_intersections}, in analogy to the continuous setting where it is known that upper bounds on the dimension of fibers imply lower bounds on the dimension of sumsets. 

Our final main result in the integer setting is an analogue of \cref{LMP_theorem} concerning the dimension of iterated sumsets of $\times r$-invariant sets.  Our deduction of \cref{theorem_lmp_analogue} from \cref{LMP_theorem} highlights the flexibility of the machinery developed in this paper to transfer results from the reals to the integers.

\begin{Maintheorem}
\label{theorem_lmp_analogue}
Let $(A_i)_{i =1}^\infty$ be a sequence of $\times r$-invariant subsets of $\Nz$.  If $\sum_{i=1}^\infty \dimdh A_i / \allowbreak | \log \dimdh A_i |$ diverges, then
\begin{align*}
\lim_{n \to \infty} \ldimdh \big( A_1 + \cdots + A_n \big) = 1.
\end{align*}
\end{Maintheorem}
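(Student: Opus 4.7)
The plan is to reduce \cref{theorem_lmp_analogue} to its real-variable counterpart \cref{LMP_theorem}. First, using the correspondence between $\times r$-invariant integer sets and subshifts developed in \cref{sec_connection_01-interval}, I associate to each $A_i$ the compact $T_r$-invariant set $X_i \subseteq [0,1]$ characterized by the property that $\lfloor r^K x\rfloor \in A_i$ for every $x\in X_i$ and every $K\geq 1$. The two features I need from this correspondence are $\dimh X_i = \dimdh A_i$ (both equal the topological entropy of the associated subshift divided by $\log r$) and the truncation property that $\lfloor r^K x\rfloor$ is an element of $A_i$ bounded by $r^K$. Since $\sum \dimh X_i / |\log \dimh X_i| = \sum \dimdh A_i / |\log \dimdh A_i|$ diverges by hypothesis, \cref{LMP_theorem} gives $\dimh(X_1 + \cdots + X_n) \to 1$.

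Next, fix $\gamma < 1$ and pick $n$ large enough that $\dimh(X_1 + \cdots + X_n) > \gamma$. Compactness of $X_1 + \cdots + X_n \subseteq [0,n]$ and the definition of Hausdorff dimension yield $\HC^{\gamma}_{\geq \rho}(X_1+\cdots+X_n) \geq c$ uniformly in $\rho > 0$, for some $c = c(n,\gamma) > 0$. For the transfer, note that for each $K \geq 1$ and $x_i \in X_i$, $|x_i - \lfloor r^K x_i\rfloor/r^K| \leq 1/r^K$ with $\lfloor r^K x_i\rfloor \in A_i$, so every point of $X_1+\cdots+X_n$ lies within $n/r^K$ of some $m/r^K$ with $m \in (A_1+\cdots+A_n)\cap[0,nr^K]$. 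Given any open ball cover of $(A_1+\cdots+A_n)\cap[0,nr^K]$ with diameters $\delta_i \geq 1$, rescaling by $r^{-K}$ and fattening each ball by $2n/r^K$ in diameter produces an open ball cover of $X_1+\cdots+X_n$ with diameters $(\delta_i + 2n)/r^K$. The elementary inequality $\delta_i + 2n \leq (1+2n)\delta_i$, valid for $\delta_i \geq 1$, bounds the $\gamma$-cost of the rescaled cover by $(1+2n)^{\gamma} r^{-K\gamma}\sum_i \delta_i^{\gamma}$. Since this cost is at least $c$, rearranging gives $\HC^{\gamma}_{\geq 1}((A_1+\cdots+A_n)\cap[0,nr^K]) \gtrsim (nr^K)^{\gamma}$, and a monotonicity argument as $N$ ranges in $[nr^K, nr^{K+1})$ extends the bound to all large $N$. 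Thus $\ldimdh(A_1+\cdots+A_n) \geq \gamma$, and letting $\gamma \to 1$ yields the theorem.

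The main obstacle is this transfer step: $\ldimdh$ is defined using covers by balls of diameter $\geq 1$, so a naive bound $\delta_i + 2n \leq 2\max(\delta_i, 2n)$ would yield only a lower bound on $\HC^{\gamma}_{\geq 2n}$, which is strictly weaker and does not lower-bound $\ldimdh$. The inequality $\delta_i + 2n \leq (1+2n)\delta_i$ for $\delta_i \geq 1$ absorbs the additive fattening into a purely multiplicative constant (depending on $n$ but independent of $\delta_i$), which is exactly what is needed to translate the continuous Hausdorff content estimate for $X_1 + \cdots + X_n$ into a matching \emph{discrete} Hausdorff content lower bound at every integer scale $\delta_i \geq 1$.
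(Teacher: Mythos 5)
Your overall route is the same as the paper's: pass to real sets via the integer--real correspondence of \cref{sec_connection_01-interval}, apply \cref{LMP_theorem}, and transfer a Hausdorff content bound back to the integers at the scales $r^{K}$. The transfer step itself is correct: fattening a rescaled cover by $2n/r^{K}$ and absorbing the additive term via $\delta_i+2n\leq(1+2n)\delta_i$ for $\delta_i\geq 1$ is in substance the comparison the paper gets from \cref{lemma_hausdorff_near_implies_contents_near}, and proving only the lower bound $\ldimdh(A_1+\cdots+A_n)\geq\gamma$ is indeed all the theorem requires (the paper establishes the equality \eqref{more_than_sufficient_in_LMP}, which is more than needed).

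The gap is in your first step: the set $X_i$ cannot simultaneously have the two features you ascribe to it by citation. The set produced by the paper's correspondence (\cref{prop_correspondence_integers_01-interval}), namely the Hausdorff limit of $(A_i\cap[0,r^k))/r^k$, does satisfy $\dimh X_i=\dimdh A_i$, but it does not satisfy your truncation property: for $r=10$ and $A_i$ the integers with digits in $\{0,9\}$, the limit set contains $x=0.1=\lim_k 0.0\overline{9\cdots9}$, yet $\lfloor 10x\rfloor=1\notin A_i$. Conversely, the maximal set with your property, $S_i=\{x:\lfloor r^{K}x\rfloor\in A_i\ \text{for all }K\}$, is not closed (so \cref{LMP_theorem} does not apply to it as stated), and, more seriously, the inequality $\dimh \overline{S_i}\geq\dimdh A_i$ is not part of the cited correspondence: every point of $S_i$ forces an infinite chain $m_K=\lfloor r^Kx\rfloor$ with $\Phi_r(m_{K+1})=m_K$, so $S_i$ only sees those elements of $A_i$ admitting arbitrarily long extensions inside $A_i$, whereas the entropy computation behind \cref{prop_correspondence_integers_subshifts} counts all of $A_i\cap[0,r^K)$, dead ends included. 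The statement that no dimension is lost when discarding dead ends is exactly the nontrivial input the paper supplies in \cref{cor_correspondence_integers_subshifts} (via a measure of maximal entropy and recurrence), and your argument needs it, since a lower bound on $\dimh X_i$ is what feeds the divergence hypothesis of \cref{LMP_theorem}. The repair is the paper's own: replace $A_i$ by $A_i'=\bigcap_{k,\ell}\Phi_r^k\Psi_r^\ell(A_i)$, take $X_i$ to be the Hausdorff limit associated to $A_i'$, and replace your exact truncation property by the proximity $X_i\subseteq\big[(A_i'\cap[0,r^K))/r^K\big]_{2r^{-K}}$ coming from \cref{lem_haus_containment}; your fattening computation then runs verbatim with $2n$ replaced by $4n$, and since $A_i'\subseteq A_i$ the resulting bound passes to $A_1+\cdots+A_n$.
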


In the same way as in the continuous regime, this theorem demonstrates that the structure captured by $\times r$-invariance in $\Nz$ sits transversely to the additive structure captured by additive closure. It also demonstrates the connection between $\times r$-invariant subsets of the integers and $\times r$-invariant subsets of $[0,1]$, and it will serve to emphasize the role multiplicative independence plays in the other results in this section.

\subsection{Overview of the paper}

The paper is organized as follows.  In \cref{sec_reals}, we derive the intersection and sumset transversality results for multiplicatively invariant subsets of $[0,1]$ from the main result in \cite{shmerkin}.  We begin \cref{sec_discrete_fractal_geometry} with the basic facts and results we need from discrete fractal geometry in \cref{sec_discrete_dimensions} and continue by connecting $\times r$-invariant subsets of $\Nz$ to symbolic dynamics and multiplicatively invariant subsets of the reals. \cref{sec_discrete_fractal_geometry} lays the groundwork for \cref{sec_integer_transversality}, where we prove our main results: Theorems \ref{prop_discrete_furstenberg}, \ref{mainthm_integer_intersections}, \ref{mainthm_integer_sumsets}, and \ref{theorem_lmp_analogue}.  We construct an example in \cref{section_counterexample} that demonstrates that \cref{mainthm_integer_sumsets} is not expected to hold under weaker assumptions. Finally, we conclude the paper with \cref{sec_future} by outlining a number of open problems and directions.

\subsection{Acknowledgements}

The authors extend a debt of gratitude to Pablo Shmerkin and Mark Pollicott, whose insightful questions led to improvements in early formulations of \cref{mainthm_integer_sumsets}, and to Sam Chow for help with \cref{subsec_polynomial}.  
We also thank Vicente Saavedra Araya who pointed out a mistake in an earlier draft of this paper.
The third author is supported by the Swiss National Science Foundation under grant number TMSGI2-211214.

\section{Sums and intersections of multiplicatively invariant subsets of the reals}
\label{sec_reals}

In this section, we prove that subsets of $[0,1]$ that are multiplicatively invariant with respect to multiplicatively independent bases are both geometrically and additive combinatorially transverse.  
Our theorems are derived from the main result of Shmerkin \cite{shmerkin}, but we give particular care on emphasizing the ``uniformity'' in the parameters. 
While most of the results in this section are already implicit in the literature, we spell out the full details to have the precise statements we need, and we provide complete proofs for the benefit of non-experts.

This is the only section in the paper in which we draw on classical fractal geometry, so we begin by establishing the basic terminology and results.

The set of real numbers, $\R$, is equipped with the usual Euclidean metric, and, for convenience, all product spaces in the work are endowed with the $L^1$ (taxicab) metric. The distance between $x,y \in \R^d$ is denoted by $|x-y|$, and the open ball centered at $x$ with radius $\delta$ is denoted $B(x,\delta)$.  Throughout the paper, a \emph{measure} refers to a non-negative-valued Radon measure on $\R^d$. The total mass of a measure $\mu$ is $\|\mu\| \defeq \mu(\R^d)$, and its support is denoted $\supp \mu$.  The push-forward of $\mu$ under a map $\varphi$ is denoted $\varphi \mu$, so that $\varphi \mu (B) = \mu (\varphi^{-1} B)$ for all measurable sets $B$.

Finally, given two positive-valued functions $f$ and $g$, we write $f \ll_{a_1,\ldots,a_k} g$ if there exists a constant $c > 0$ depending only on the quantities $a_1, \ldots, a_k$ for which $f(x) \leq c g(x)$ for all $x$ in the domain common to both $f$ and $g$.  We write $f \asymp_{a_1,\ldots,a_k} g$ if both $f \ll_{a_1,\ldots,a_k} g$ and $f \gg_{a_1,\ldots,a_k} g$.

\subsection{Fractal geometry of sets and measures in Euclidean space}
\label{sec_continuous_fractal_geometry}

In this subsection, we give a terse summary of the necessary notation, terminology, and basic results from traditional fractal geometry.  The reader interested in learning more will find most of this material in Mattila \cite[Ch. 4]{mattila_geometry_of_sets_1995}.  Throughout this subsection, $\rho$ and $\gamma$ are positive real numbers and $X \subseteq \R^d$ is non-empty. 

\begin{definition} \leavevmode 
\begin{itemize}
    \item The set $X$ is \emph{$\rho$-separated} if for all distinct $x_1, x_2 \in X$, $|x_1 - x_2| \geq \rho$.
    \item The \emph{packing number of $X$} (sometimes also called the metric entropy of $X$) at scale $\rho$ is
\[\NC(X,{\rho}) = \sup \big \{ |X_0| \ \big | \ X_0 \subseteq X \text{ is } \rho \text{-separated} \big \}.\]
    \item The \emph{upper Minkowski dimension} of $X$ is
    \[
    \udimm X = \limsup_{\rho \to 0^+} \frac{\log \NC (X, \rho)}{\log \rho^{-1}}.
    \]
    The \emph{lower Minkowski dimension}, $\ldimm X$, is defined analogously with a limit infimum in place of the limit supremum.  If the lower and upper Minkowski dimensions agree, then that value is the \emph{Minkowski dimension} of $X$, $\dimm X$.  It is easy to check that for all $\rho < 1$, $\udimm X = \limsup_{N \to \infty} \log \NC (X, \rho^{-N}) \big/ \log \rho^N$ and similarly for $\ldimm X$.
\end{itemize}
\end{definition}

\begin{definition}\leavevmode
\label{def_hausdorff_content}
\begin{itemize}
    \item The \emph{discrete Hausdorff content of $X$ at scale $\rho$ and dimension $\gamma$} is
    \[\HC_{\geq \rho}^\gamma (X) = \inf \left\{ \sum_{i\in I} \delta_i^\gamma \ \middle| \ X \subseteq \bigcup_{i\in I} B_i, \ B_i \text{ open ball of diameter } \delta_i \geq \rho \right\}.\]
    \item The \emph{unlimited Hausdorff content at dimension $\gamma$} of $X$ is
    \[\HC_{>0}^\gamma (X) = \inf \left\{ \sum_{i \in I} \delta_i^\gamma \ \middle| \ X \subseteq \bigcup_{i \in I} B_i, \ B_i \text{ open ball of diameter } \delta_i > 0 \right\}. \]
    \item The \emph{Hausdorff dimension} of $X$ is
    \begin{align*}
        \dimh X = \sup \{ \gamma \in \R \ | \ \HC_{>0}^\gamma (X) > 0 \} =\inf \{ \gamma \in \R \ | \ \HC_{>0}^\gamma (X) = 0 \}.
    \end{align*}
\end{itemize}
\end{definition}

Note that if $X$ is compact, the index set $I$ in the definitions of $\HC_{\geq \rho}^\gamma (X)$ and $\HC_{>0}^\gamma (X)$ may be taken to be finite.

\begin{remark}
\label{lemma_connection_between_discrete_and_unlimited_H_contents}
The discrete Hausdorff content tends to the unlimited Hausdorff content in the limit as the scale tends to zero.  More precisely, for $X \subseteq \R^d$ compact and $\gamma \geq 0$,
\begin{align*}
    \lim_{\rho \to 0^+} \HC_{\geq \rho}^\gamma (X) = \HC_{>0}^\gamma(X).
\end{align*}
It follows that if $\lim_{\rho \to 0} \HC_{\geq \rho}^\gamma (X)  > 0$, then $\dimh X \geq \gamma$.  The proof is straightforward; see \cite[Lemma 2.4]{GMR_new_proof_2020}.
\end{remark}

Recall the notation $[X]_\delta$ for the $\delta$-neighborhood of $X$: \[[X]_\delta \defeq \big\{z\in \R^d \ \big| \ \exists x\in X~\text{with}~ |z - x| \leq \delta \big\}.\]
The \emph{Hausdorff distance} between two non-empty, compact sets $X, Y \subseteq \R^d$ is
\[
d_{H}(X,Y)\coloneqq \inf\big\{\delta >0 \ \big| \ X\subseteq [Y]_\delta~\text{and}~Y\subseteq [X]_\delta\big\}.
\]
By the Blaschke selection theorem, the set of all non-empty, compact subsets of $\R^d$ equipped with the Hausdorff distance is a complete metric space.

\begin{lemma}
\label{lemma_subsets_for_hausdorff_distance}
Suppose $X, Y \subseteq \R^d$ are non-empty, compact and $X \subseteq [Y]_\delta$.  For all non-empty, compact $X' \subseteq X$, there exists a non-empty, compact $Y' \subseteq Y$ such that $d_H(X',Y') \leq \delta$.
\end{lemma}

\begin{proof}
Define $Y' = Y \cap [X']_\delta$.  By definition, the set $Y'$ is compact and $Y' \subseteq [X']_\delta$.  Since $X' \subseteq [Y]_\delta$, the set $Y'$ is non-empty.  To see that $X' \subseteq [Y']_\delta$, let $x \in X'$.  Since $X \subseteq [Y]_\delta$, there exists $y \in Y$ such that $|x-y|\leq \delta$.  This implies that $y \in Y \cap [X']_\delta$, which shows that $x \in [Y \cap [X']_\delta]_\delta = [Y']_\delta$, as was to be shown.
\end{proof}

We proceed with a number of straightforward lemmas that describe how the packing number and discrete Hausdorff content behave as functions of the set and the scale.
We include full proofs for completeness.

\begin{lemma}
\label{lemma_hausdorff_near_implies_contents_near}
\label{lem_box_counting_estimate}
For all $a, \rho > 0$, all non-empty, compact sets $X, Y \subseteq \R^d$ satisfying $X\subseteq [Y]_{a\rho}$, and all $\gamma \in [0,d]$,
\begin{align}
\label{eqn_metric_entropy_of_near_sets}
\NC \big(X, \rho \big) \,&\ll_{a,d} \, \NC \big(Y, \rho \big), \\
\label{eqn_hausdorff_content_of_near_sets}
\HC_{\geq \rho}^\gamma (X) \, &\ll_{a,d}\, \HC_{\geq \rho}^\gamma (Y).
\end{align}
\end{lemma}

\begin{proof}
Let $X' \subseteq X$ be a maximal $\rho$-separated subset of $X$. Define a map $\pi: X' \to Y$ by choosing for each point $x \in X'$ a point $\pi x \in Y$ such that $|x - \pi x| \leq a \rho$. Define $Y' = \pi X'$.  Since $X'$ is $\rho$-separated, there are at most $C = C(a,d) > 0$ many points of $X'$ in any closed ball of radius $(a+1)\rho$.  It follows that the map $\pi$ is at most $C$-to-1, and hence that $|Y'| \gg_{a,d} |X'|$.  It also follows that there are at most $C$ many points of $Y'$ in any closed ball of radius $\rho$.  Therefore, the set $Y'$ can be thinned to a set $Y'' \subseteq Y'$ that is  $\rho$-separated and that satisfies $|Y''| \gg_{a,d} |Y'|$.  Combining these observations,
\[\NC \big(X, \rho \big) = |X'| \ll_{a,d} |Y'| \ll_{a,d} |Y''| \leq \NC \big(Y, \rho \big)\]
which verifies \eqref{eqn_metric_entropy_of_near_sets}.

To show \eqref{eqn_hausdorff_content_of_near_sets}, let $\{B_i\}_{i\in I}$ be a collection of open balls that covers $Y$ and where $B_i$ has diameter $r_i\geq \rho$ and
$\sum_{i\in I}r_i^\gamma<2\HC_{\geq\rho}^\gamma(Y)$.
It follows that $X\subseteq \bigcup_{i\in I} [B_i]_{a\rho}$ and $[B_i]_{a\rho}$ is a ball of diameter $r_i+2a\rho\leq (2a+1)r_i$.
Therefore $\HC_{\geq\rho}^\gamma(X)\leq\sum_{i\in I}((2a+1)r_i)^\gamma\leq 2(2a+1)^d\HC_{\geq\rho}^\gamma(Y)$.
\end{proof}

\begin{lemma}
\label{lemma_metric_entropy_at_scaled_scale}
For all $a, \rho > 0$, all non-empty, compact $X \subseteq \R^d$, and all $\gamma \in [0,d]$,
\begin{align*}
\NC \big( X, \rho \big) &\asymp_{a,d} \NC \big( X, a \rho \big), \\
\HC_{\geq \rho}^\gamma \big( X \big) &\asymp_{a,d} \HC_{\geq a \rho}^\gamma \big( X \big).
\end{align*}
\end{lemma}

\begin{proof}
Replacing $\rho$ with $a \rho$, we may assume without loss of generality in both statements that $0 < a \leq 1$.

Since $0 < a \leq 1$, we have that $\NC \big( X, \rho \big) \leq \NC \big( X, a \rho \big)$.  To see the reverse inequality, let $X' \subseteq X$ be a maximal $(a\rho)$-separated subset of $X$. Since the set $X'$ intersects any ball of diameter $\rho$ in at most $\ll_{a,d} 1$ many points, it may be thinned to an $\rho$-separated subset $X''$ of $X'$ with cardinality $|X''| \gg_{a,d} |X'|$.  Therefore, $\NC(X, a\rho) = |X'| \ll_{a,d} |X''| \leq \NC(X, \rho)$.

Since $0 < a \leq 1$, we have that $\HC_{\geq a \rho}^\gamma \big( X \big) \leq \HC_{\geq \rho}^\gamma \big( X \big)$. To see the reverse inequality, let $X \subseteq \cup_i B_i$ be an open cover of $X$ by balls $B_i$ with $\diam B_i \geq a \rho$ and $\sum_i (\diam B_i)^\gamma \leq 2 \HC_{\geq a \rho}^\gamma(X)$.  Replace $B_i$ with an open ball $C_i$ with the same center and with diameter $\diam B_i / a$.  Since $B_i \subseteq C_i$, we have that $X \subseteq \cup_i C_i$ is an open cover of $X$ by balls $C_i$ with $\diam C_i \geq \rho$. Therefore,
\[\HC_{\geq \rho}^\gamma (X) \leq \sum_i (\diam C_i)^\gamma = a^{-\gamma} \sum_i (\diam B_i)^\gamma \leq 2 a^{-\gamma} \HC_{\geq a \rho}^\gamma(X),\]
as was to be shown.
\end{proof}

\begin{lemma}
\label{lemma_hausdorff_content_bound_under_lipschitz_map}
For all $\rho > 0$, all non-empty, compact $X \subseteq \R^d$, all Lipschitz $\varphi: \R^d \to \R^k$ with Lipschitz constant $a > 0$, and all $\gamma \in [0,d]$,
\begin{align*}
\NC (\varphi (X), \rho) &\ll_{a,d} \NC (X, \rho), \\
\HC_{\geq \rho}^\gamma (\varphi (X)) &\ll_{a,d} \HC_{\geq \rho}^\gamma (X).
\end{align*}
\end{lemma}

\begin{proof}
Let $X' \subseteq X$ be such that $\varphi(X')$ is a maximal $\rho$-separated subset of $\varphi(X)$.  Since $\varphi$ has Lipschitz constant $a$, the points of $X'$ are $\rho / a$-separated.  Thus, by \cref{lemma_metric_entropy_at_scaled_scale},
\[\NC (\varphi (X), \rho) = |X'| \leq \NC (X, \rho / a) \ll_{a,d} \NC (X, \rho).\]
verifying the first inequality.

To see the second inequality, note that if $B $ is an open ball in $\R^d$, then the diameter of $\varphi (B)$ is bounded from above by $a\cdot\diam B$. Hence, there exists an open ball $C \subseteq \R^k$ with $\diam B\leq \diam C \leq \max(a,1) \diam B$ and such that $\varphi(B)\subseteq C$.

If $\cup_i B_i$ is a cover of $X$ by open balls $B_i$ with $\diam B_i \geq \rho$ then, finding for each $B_i$ a ball $C_i$ as described above, we obtain a cover $\cup_i C_i$ of the image set $\varphi(X)$ by open balls $C_i\subseteq\R^k$ with $\rho\leq \diam C_i \leq \max(a,1) \diam B_i$.
It follows that
\[\HC_{\geq \rho}^\gamma (\varphi (X)) \leq \max(a,1)^\gamma\,  \HC_{\geq \rho}^\gamma (X),\]
as was to be shown.
\end{proof}

\begin{definition}
The real number $\gamma$ is a \emph{Frostman exponent for a measure $\mu$} if there exists a constant $c > 0$ such that for all balls $B \subseteq \R^d$,
\begin{align}
\label{eqn:basic_frostman_inequality}
    \mu \big( B \big) \leq c (\diam B)^\gamma.
\end{align}
If \eqref{eqn:basic_frostman_inequality} holds only for balls $B$ of diameter greater than / less than $\rho$, then $\gamma$ is a \emph{Frostman exponent at scales larger than / smaller than $\rho$}, respectively.
\end{definition}

The following lemmas are discrete versions of the well-known mass distribution principle and Frostman's lemma.  This pair of results describes a close relationship between the discrete Hausdorff content of a set and the Frostman exponents of measures supported on that set.

\begin{lemma}[{cf. \cite[Lemma 1.2.8]{bishopperesbook}}]
\label{lem_consequence_of_discrete_frostman}
Let $c, \rho >0$ and $\mu$ be a measure on $\R^d$. If for all balls $B \subseteq \R^d$ of diameter at least $\rho$ we have $\mu (B) \leq c (\diam B)^\gamma$, then $\HC_{\geq \rho}^\gamma(\supp \mu) \geq \|\mu\| / c$.
\end{lemma}

\begin{proof}
Let $\eps > 0$, and let $\{B_i\}_{i \in I}$ be a cover of $\supp \mu$ with balls $B_i$ of diameter $\delta_i \geq \rho$ and with $\sum_{i \in I} \delta_i^\gamma \leq (1+\eps) \HC_{\geq \rho}^\gamma(\supp \mu )$. Then
\[\| \mu \| \leq \mu \left( \bigcup_i B_i \right) \leq \sum_i c \delta_i^\gamma \leq c (1+\eps) \HC_{\geq \rho}^\gamma(\supp \mu ).\]
The conclusion follows because $\eps > 0$ was arbitrary.
\end{proof}

\begin{lemma}
\label{lemma_finitary_frostman}
There exists a constant $c > 0$, depending only on the dimension $d \in \N$, for which the following holds. For all non-empty, compact $X \subseteq [0,1]^d$ and all $\rho,\gamma > 0$ there exists a measure $\mu$ supported on $X$ with $\| \mu \| \geq \HC_{\geq \rho}^\gamma (X)$ and with the property that for all balls $B$ of diameter at least $\rho$, $\mu(B) \leq c (\diam B)^\gamma$.
\end{lemma}

\begin{proof}
This requires only a small modification to the proof of Frostman's Lemma found in \cite[Lemma 3.1.1]{bishopperesbook}. 
By adjusting the constant $c$, it suffices to prove the lemma for $\rho$ of the form $2^{-k}$. 
Construct the $2$-adic tree corresponding to the set $X$ down to level $k$.
More precisely, the vertices of the tree at level $\ell$ are the closed, $2$-adic cubes of the form 
$$\left[\frac{i_1}{2^\ell},\frac{i_1+1}{2^\ell}\right]\times\cdots\times\left[\frac{i_d}{2^\ell},\frac{i_d+1}{2^\ell}\right]\text{ for some }i_1,\dots,i_d\in\{0,\dots,2^\ell-1\}$$
that have non-empty intersection with the set $X$.
Two vertices are adjacent in the tree if one of the corresponding cubes contains the other. 
Associate to each leaf $v$ (i.e. a vertex at level $k$) of the tree an arbitrary point $x_v$ in $X$ which belongs to the corresponding $2$-adic cube.  

Instead of defining a measure $\mu$ on the space of infinite paths through the tree as is done in \cite{bishopperesbook}, we define $\mu$ to be an atomic measure supported on the finite set $S=\{x_v \ | \ v\text{ is a leaf}\}$ that are associated to leaves of the tree.

Let $E$ be the set of edges in the tree.
We define an edge conductance (or capacity) function $c:E\to[0,1]$ as follows:
an edge $e$ connecting vertices on levels $\ell-1$ and $\ell$ is given an edge conductance of $c(e)=2^{-\ell\gamma}$.
Fix a maximal flow $f:E\to[0,1]$ from the root of the tree to the leaves.
This means that for every vertex $v$ of the tree which is neither the root nor one of the leafs, the sum of $f(e)$ over all edges connecting $v$ to a vertex at a higher level equals the value of $f$ on the (unique) edge connecting $v$ to a vertex of a lower level.
Moreover, $f$ is restricted by the conductance (so that $f(e)\leq c(e)$ for all $e\in E$) and attains the highest possible value (among all such flows $f$) of the sum over all edges connecting to a leaf.
Define the $\mu$-mass of each point $x_v\in S$ to be equal to the value of $f$ on the (unique) edge adjacent to the leaf $v$.

Every 2-adic cube $B$ with $2^{-k} \leq \diam B \leq 2^{-1}$ and with non-empty intersection with $X$ corresponds to an edge in the tree. By the choice of edge conductance and the fact that the maximal flow is a legal flow, $\mu(B) \leq (\diam B)^\gamma$. (Note that this inequality also holds for $B = [0,1]^d$.) A $2$-adic grid cover of $X$ with cells of diameter at least $2^{-k}$ corresponds to a cut-set of the tree.  By the MaxFlow-MinCut theorem, the measure $\mu$ has total mass equal to the minimum cut, which is necessarily greater than $\HC_{\geq 2^{-k}}^\gamma (X)$, concluding the proof of the lemma.
\end{proof}

\subsection{Multiplicatively invariant sets and restricted digit Cantor sets}
\label{sec_invariant_sets_and_missing_digit_cantor_sets}

In this short subsection, we record some basic facts about multiplicatively invariant subsets of $[0,1]$ and the subclass of restricted digit Cantor sets.

\begin{definition}
\label{def_mult_on_torus_def}
Let $r \in \N$, $r \geq 2$, and $X \subseteq [0,1]$. 
\begin{itemize}
    \item The map $T_r: [0,1] \to [0,1]$ is defined by $T_r x = \{rx\}$, the fractional part of the real number $rx$.
    \item The set $X$ is \emph{$\times r$-invariant} if it is closed and $T_r X \subseteq X$.
    \item The set $X$ is \emph{multiplicatively invariant} if it is $\times r$-invariant for some $r \geq 2$.
\end{itemize}
We stress that, by our definition, all multiplicatively invariant sets are closed.
\end{definition}

Multiplicatively invariant sets behave well in regards to dimension: their Hausdorff and Minkowski dimensions agree, and so by \cite[Theorem 8.10]{mattila_geometry_of_sets_1995} the dimension of a Cartesian products of such sets is the sum of the dimensions of the factors.

\begin{lemma}
\label{thm:furstenbergdimsofsubshift}
\label{dimension_of_product_of_invariant_sets}
If $X, Y \subseteq [0,1]$ are multiplicatively invariant, then $\dimh X = \dimm X$ and $\dimh (X \times Y) = \dimm (X \times Y) = \dimh X + \dimh Y$.
\end{lemma}

\begin{proof}
The first fact is proven in \cite[Proposition III.1]{furstenbergdisjointness}.  The second follows immediately from \cite[Corollary 8.11]{mattila_geometry_of_sets_1995} and the fact that $\dimh X = \udimm X$.
\end{proof}

Restricted digit Cantor sets are important examples of multiplicatively invariant sets, and the natural Bernoulli measures they support will play an important role in the theorems in this section.

\begin{definition} \leavevmode
\begin{itemize}
    \item The \emph{base-$r$ restricted digit Cantor set with digits from $\DC \subseteq \{0, \dots, r-1\}$} is
    \[\CC_{r, \DC} = \left\{ \sum_{i=1}^\infty \frac{d_i}{r^i} \ \middle | \ (d_i)_{i \in \N} \subseteq \DC \right\},\]
    the set of those real numbers in $[0,1]$ expressible in base-$r$ using only digits from $\DC$.
    \item The \emph{base-$r$ restricted digit Cantor measure with digits from $\DC \subseteq \{0, \dots, r-1\}$}, denoted $\cmeas_{r, \DC}$, is the $(1/|\DC|)$-Bernoulli measure on $\CC_{r, \DC}$,  defined as
    $$\mu_{r,\DC}\left(\left[\frac j{r^i},\frac{j+1}{r^i}\right)\right)
    =
    \begin{cases}|\DC|^{-i}&\text{ if }\left[\frac j{r^i},\frac{j+1}{r^i}\right)\cap \CC_{r, \DC}\neq\emptyset\\
    0&\text{otherwise}
    \end{cases}.$$
    \item The \emph{dimension}\footnote{There are many natural and useful ways to define the dimension of a measure.  In this paper, we will need only to consider the dimension of products of restricted digit Cantor measures, a class of measures for which most notions of dimension coincide.  Thus, we define ``$\dim \mu$'' for such measures $\mu$ in a highly specialized way instead of giving a general definition of the symbol.} of the measure $\cmeas_{r, \DC}$ is $\dim \cmeas_{r, \DC} \defeq \log | \DC | / \log r$.  
    We also define the dimension of a product of such measures to be the sum of the dimensions of the factors.    
\end{itemize}
\end{definition}

The dimensions of a product of restricted digit Cantor sets $\CC_{r, \DC_r} \times \CC_{s, \DC_s}$ and of its associated product measure $\mu \defeq \cmeas_{r, \DC_r} \times \cmeas_{s, \DC_s}$ coincide and are equal to $\log r / \log |\DC_r| + \log s / \log |\DC_s|$.  In fact, such a measure $\mu$ is highly regular, in the sense that for all balls $B \subseteq \R^2$ of diameter $0 < \delta < 1$ centered at a point in the support of $\mu$,
\begin{align}
    \label{lemma_dimension_of_cantor_sets_and_measures}
\mu (B) \asymp \delta^{\dim \mu},
\end{align}
where the asymptotic constants are independent of $\delta$.  This follows from the fact that such an estimate holds for single restricted digit Cantor measures, an easy exercise left to the reader.

While multiplicatively invariant sets can be vastly more complicated than restricted digit Cantor sets, the following lemma shows that the former can be approximated from above (with respect to dimension) by the latter.  The result is well known; for a proof, see \cite[Prop. 9.3]{wu}.

\begin{lemma}
\label{lem_embedding_into_missing_digit_set}
Let $X \subseteq [0,1]$ be multiplicatively invariant.  For all $\eps > 0$, there exists a restricted digit Cantor set $X'$ containing $X$ such that $\dimh X' < \dimh X + \eps$.
\end{lemma}

\subsection{A uniform Frostman exponent projection theorem}
\label{sec_frost_exp_projection_theorem}

For $t \in \R$, denote by $\Pi_t: \R^2 \to \R$ the oblique projection $\Pi_t(x,y) = x + ty$.
The goal in this subsection is to prove \cref{thm_uniform_frost_exp_for_projections} below, which is a result about Frostman exponents for oblique projections of products of restricted digit Cantor measures.
This theorem follows implicitly from the results in \cite{shmerkin}, but since the exact statement does not appear in the literature, we provide a complete proof. 
We stress the uniformity over the projection parameter $t$, which will be crucial to our applications later.

\begin{theorem}
\label{thm_uniform_frost_exp_for_projections}
Let $\mu$ be the product of two restricted digit Cantor measures whose bases are multiplicatively independent. For all compact $I \subseteq \R \setminus \{0\}$ and all $0<\gamma< \min(\dim \mu, 1)$, there exists $c>0$ such that for all $\rho \in [0,1]$,
\[ \sup_{t \in I , \ x \in \R} \ 
\Pi_t \mu \big( B(x, \rho) \big) \leq c \rho^{\gamma}.\]
\end{theorem}

Let $2 \leq r < s$ be multiplicatively independent integers, $\DC_r\subseteq\{0,\dots,r-1\}$ and $\DC_s\subseteq\{0,\dots,s-1\}$ sets of digits, and $\CC_{r,\DC_r} \subseteq[0,1]$ and $\CC_{s,\DC_s} \subseteq[0,1]$ the base-$r$ and base-$s$ restricted digit Cantor sets with allowed digits $\DC_r$ and $\DC_s$, respectively.
Let $\cmeas_{r, \DC_r}$ and $\cmeas_{s, \DC_s}$ the restricted digit Cantor measures on $\CC_{r,\DC_r}$ and $\CC_{s,\DC_s}$, respectively, and let $\mu = \cmeas_{r, \DC_r} \times \cmeas_{s, \DC_s}$.

We will prove \cref{thm_uniform_frost_exp_for_projections} for the measure $\mu$ by first proving the following theorem, which we derive from a careful application of Shermkin's recent $L^q$-dimension projection theorem \cite[Theorem 1.11]{shmerkin}. Denote by $\dyad_m$ the dyadic partition of $\R$ into intervals of length $2^{-m}$, and denote by $\log$ the base-$2$ logarithm.

\begin{theorem}
\label{theorem_shmerkin_lq_projection}
For all $q \in (1, \infty)$ and all compact $I \subseteq \R \setminus \{0\}$,
\[\lim_{m \to \infty}  \sup_{t \in I} \left| - \frac{\log \sum_{Q \in \dyad_m} \Pi_t \mu (Q)^q }{ (q-1)m} - \min(\dim \mu, 1) \right| = 0.\]
\end{theorem}

\begin{proof}
It suffices to prove \cref{theorem_shmerkin_lq_projection} for intervals $I \subseteq (0,\infty)$. Indeed, note that the set $1-\CC_{s,\DC_s} = \CC_{s, \tilde{\DC_s}}$ is a base-$s$ restricted digit Cantor set with digits from $\tilde{\DC_s} = s-1-\DC_s$ whose associated restricted digit Cantor measure $\cmeas_{s,\tilde{\DC_s}}$ is the image of the measure $\cmeas_{s, \DC_s}$ under $x \mapsto 1-x$.  
It follows that for $t < 0$, $\Pi_{t} \mu$ is a translate of $\Pi_{-t} (\cmeas_{r, \DC_r} \otimes \cmeas_{s, \tilde{\DC_s}})$, a measure which satisfies the conclusion of the theorem. To prove the theorem for $I\subseteq(0,\infty)$, it suffices to prove it for every interval $I$ of the form $I=[\xi,\xi s)$, where $\xi>0$, since every compact subset of $(0,\infty)$ is contained in a finite union of intervals of this form.

Let $\xi > 0$ and $\lambda = 1/r$.  
Let $T: [0,1) \to [0,1)$ be the irrational rotation by $\beta=\log r/\log s$, $Tx = x+\beta\bmod1$.
For $t>0$, let $S_t:\R\to\R$ denote the multiplication by $t$.
Let $\Delta_r$ and $\Delta_s$ be the normalized counting measures on $\DC_r$ and $\DC_s$, respectively, and for $x \in [0,1)$, define
\[\Delta(x)=\begin{cases}\Delta_r&\text{ if }x\geq\beta
\\
\Delta_r* S_{\xi s^x} \Delta_s&\text{ if }x<\beta\end{cases}.\]
Given $x \in [0,1)$ and $n \in \N$, define $\mu_{x,0}=\delta_0$ and $\mu_{x,n} = \mu_{x,n-1}*S_{\lambda^n}\Delta(T^nx)$, where we denote by $S_\lambda \nu$ the pushforward of the measure $\nu$ under $S_\lambda$.
To each $x\in X$, we associate the measure
\[\mu_x=\conv_{n=1}^\infty S_{\lambda^n}\Delta(T^nx) = \lim_{n\to\infty}\mu_{x,n}.\]

The tuple $([0,1), T, \Delta, \lambda)$ is an example of what Shmerkin calls a ``pleasant model''  (\cite[Definition 1.9]{shmerkin}).
As such, it follows from \cite[Theorem 1.11]{shmerkin} that for all $q \in (1, \infty)$,
\begin{align}
    \label{eq_pleasant}
    \lim_{m \to \infty}  \sup_{x \in [0,1)} \left| - \frac{\log \sum_{Q \in \dyad_m}\mu_x(Q)^q }{ (q-1)m} - \min(\alpha, 1) \right| = 0,
\end{align}
where
\[\alpha= \alpha (q) = \frac1{(q-1)\log{\lambda}}\int_0^1\log\|\Delta(x)\|_q^q\dd x\]
and $\|\nu\|_q^q=\sum_{y\in\R}\nu(\{y\})^q$.  To finish the proof of \cref{theorem_shmerkin_lq_projection}, we will show that for all $x \in [0,1)$ and all $q > 1$, $\mu_x = \Pi_{\xi s^x}\mu$ and $\alpha = \dim \mu$.

To see that for each $x\in [0,1)$ the measure $\mu_x$ is equal to $\Pi_{\xi s^x}\mu$, observe first that
\begin{align}
\notag
\mu_{x,n}&=\mu_{x,n-1}*S_{\lambda^n}\Delta(T^nx)\\
&=\begin{cases}
\mu_{x,n-1}*S_{r^{-n}}\Delta_r&\text{ if }\{x+n\beta\}\geq\beta
\\
\label{pleasant1}
\mu_{x,n-1}*S_{r^{-n}}\left(\Delta_r * S_{\xi s^{\{x+n\beta\}}}\Delta_s\right)&\text{ if }\{x+n\beta\}<\beta
\end{cases}.
\end{align}
Note that 
\[{r^{-n}}s^{\{x+n\beta\}}=s^{-n\beta}s^{\{x+n\beta\}}=s^{x}s^{-\lfloor x+n\beta\rfloor}.\]
Borrowing notation from Shmerkin, let $n'(x) \defeq \lfloor x+n\beta\rfloor$; it is a function of both $x$ and $n$.
Note that $n'(x)$ can equivalently be described as the cardinality of the set $\{i\in\{1,\dots,n\} \ | \ \{x+i\beta\}<\beta\}$.
Now \eqref{pleasant1} becomes
\begin{align}
\label{pleasant2}
\mu_{x,n}=\begin{cases}
\mu_{x,n-1}*S_{r^{-n}}\Delta_r&\text{ if }\{x+n\beta\}\geq\beta
\\
\mu_{x,n-1}*S_{r^{-n}}\Delta_r* S_{s^{-n'(x)}\xi s^x}\Delta_s&\text{ if }\{x+n\beta\}<\beta
\end{cases}.
\end{align}
Since convolution is commutative, the fact that the orbit $\{Tx,\dots,T^nx\}$ visits $[0,\beta)$ exactly $n'(x)$ times and \eqref{pleasant2} imply that
\[\mu_{x,n}=\left(\conv_{i=1}^n r^{-i}\Delta_r\right)* S_{\xi s^x}\left(\conv_{i=1}^{n'(x)}S_{s^{-i}}\Delta_s\right).\]
Now for all $x \in[0,1)$,
\[\lim_{n\to\infty}\conv_{i=1}^n S_{r^{-i}}\Delta_r= \cmeas_{r, \DC_r} \qquad\text{and}\qquad \lim_{n\to\infty} \conv_{i=1}^{n'(x)}S_{s^{-i}}\Delta_s = \cmeas_{s, \DC_s},\]
which proves that for every $x\in[0,1)$, $\mu_x = \lim_{n\to\infty}\mu_{x,n} = \cmeas_{r, \DC_r} \ast S_{\xi s^x }\cmeas_{s, \DC_s} =\Pi_{\xi s^x}\mu$, as claimed.

To finish the proof, it remains to show that the value $\alpha$ in \eqref{eq_pleasant} equals the dimension of $\mu$, which is $\dim\mu=\dim \cmeas_{r, \DC_r} + \dim \cmeas_{s, \DC_s} =\frac{\log|\DC_r|}{\log r}+\frac{\log|\DC_s|}{\log s}$. 
Note that for almost every $x<\beta$, $\|\Delta(x)\|_q^q=\sum_{i\in\DC_r}\sum_{j\in\DC_s}\left(\frac1{|\DC_r||\DC_s|}\right)^q=|\DC_r|^{1-q}|\DC_s|^{1-q}$, and for all $x\geq\beta$, $\|\Delta(x)\|_q^q=\sum_{i\in\DC_r}\left(\frac1{|\DC_r|}\right)^q=|\DC_r|^{1-q}$.
Therefore, by the definition of $\alpha$,
\begin{align*}
\alpha &= \frac1{(q-1)\log\lambda}\int_0^1\log\|\Delta(x)\|_q^q\dd x \\
& =
\frac1{(1-q)\log r}\left(\int_0^\beta\log\|\Delta(x)\|_q^q\dd x+\int_\beta^1\log\|\Delta(x)\|_q^q\dd x\right) \\
& =\frac1{\log r}\Big(\beta\big(\log|\DC_r|+\log|\DC_s|\big)+(1-\beta)\log|\DC_r|\Big) \\
&= \frac{\log|\DC_r|}{\log r}+\frac{\log|\DC_s|}{\log s},
\end{align*}
as was to be shown.
\end{proof}

Though we have not developed the terminology for it, the conclusion in \cref{theorem_shmerkin_lq_projection} concerns the $L^q$-dimension of the images of $\mu$ under oblique projections.  The following lemma allows us to derive from \cref{theorem_shmerkin_lq_projection} a statement concerning Frostman exponents of the projected measures.

\begin{lemma}[cf. {\cite[Lemma 1.7]{shmerkin}}]
\label{lemma_frostman_exponent_from_lq_dim}
Let $\mu$ be a probability measure on $\R$, $q > 1$, and $\gamma > 0$.  If for all $m \geq M$,
\begin{align}
\label{eqn_lower_bound_on_finite_lq_dim}
    \frac{-\log \sum_{Q \in \dyad_m} \mu(Q)^q}{(q-1)m} \geq \gamma,
\end{align}
then for all $x \in \R$ and all $\rho < 2^{-M}$, $\mu(B(x,\rho)) \leq 2\rho^{\left(1- 1/q \right) \gamma}$.
\end{lemma}

\begin{proof}
Note that the inequality in \eqref{eqn_lower_bound_on_finite_lq_dim} rearranges to
\[\sum_{Q \in \dyad_m} \mu(Q)^q \leq 2^{-m(q-1)\gamma}.\]
Thus, for all $Q \in \dyad_m$,
\[\mu(Q)^q \leq \sum_{Q \in \dyad_m} \mu(Q)^q \leq 2^{-m(q-1)\gamma}.\]
This gives the desired inequality for those intervals that are elements of the partition $\dyad_m$ for $m \geq M$.  Any interval of length $2^{-(m+1)} \leq \rho < 2^{-m}$ is covered by at most two elements of the partition $\dyad_{m+1}$, giving the result.
\end{proof}

We are now in a position to deduce \cref{thm_uniform_frost_exp_for_projections} from \cref{theorem_shmerkin_lq_projection} and \cref{lemma_frostman_exponent_from_lq_dim}.

\begin{proof}[Proof of \cref{thm_uniform_frost_exp_for_projections}]
Let $I \subseteq \R \setminus \{0\}$ be compact and $0 < \gamma < \gamma' < \min(\dim \mu, 1)$.  Let $q > 1$ be large enough so that $(1-1/q)\gamma' > \gamma$.  It follows from \cref{theorem_shmerkin_lq_projection} that there exists $M \in \N$ such that for all $t \in I$ and all $m \geq M$,
\[ \frac{-\log \sum_{Q \in \dyad_m} \Pi_t \mu (Q)^q }{ (q-1)m} \geq \gamma'.\]
Let $0 < \rho_0 < 2^{-M}$ be small enough so that $2 \rho_0^{(1-1/q)\gamma'} < \rho_0^\gamma$. It follows from \cref{lemma_frostman_exponent_from_lq_dim} that for all $\rho < \rho_0$, all $t \in I$, and all $x \in \R$,
\[\Pi_t \mu \big( B(x, \rho) \big) \leq 2 \rho^{(1-1/q)\gamma'}.\]
Since the $\Pi_t \mu$ mass of any ball is at most 1, by setting $c = \rho_0^{-\gamma}$, the conclusion of \cref{thm_uniform_frost_exp_for_projections} holds for all $\rho \in [0,1]$.
\end{proof}

\subsection{Geometric transversality in the reals}
\label{sec_main_continuous_intersection_results}

Here we employ \cref{thm_uniform_frost_exp_for_projections} to deduce upper bounds on the packing number of intersections of multiplicatively invariant sets.  
The idea in the proof below is borrowed from \cite[Lemma 1.8]{shmerkin}.

\begin{theorem}
\label{thm_size_of_support_on_fibers_for_inv_sets}
Let $r$ and $s$ be multiplicatively independent positive integers, and let $X, Y \subseteq [0,1]$ be $\times r$- and $\times s$-invariant sets, respectively. Define $\ogamma = \max(0, \dim X + \dim Y -1 )$. For all compact $I \subseteq \R \setminus \{0\}$ and $\eps > 0$,
\[\lim_{\rho \to 0^+} \ \sup_{\substack{t \in I \\ x \in \R}} \ \frac{ \NC \Big( (X \times Y) \cap \Pi_t^{-1}\big(B(x, \rho)\big)\,, \,\rho \Big) } {\rho^{-(\ogamma + \eps)} } = 0.\]
\end{theorem}

\begin{proof}
Let $I \subseteq \R \setminus \{0\}$ be compact and $\eps > 0$.  According to \cref{lem_embedding_into_missing_digit_set}, we can embed $X$ and $Y$ into restricted digit Cantor sets of slightly higher dimension.  Thus, there exists a product of restricted digit Cantor measures $\mu$ of dimension $\dim \mu < \dimh X + \dimh Y + \eps/4$ such that $X \times Y \subseteq \supp \mu$.

From \cref{thm_uniform_frost_exp_for_projections}, we have that there exists $\rho_0 > 0$ such that for all $\rho < \rho_0$, all $t \in I$, and all $x \in \R$,
\[\Pi_t \mu( B(x, 2\rho)) \leq \rho^{\min(\dim \mu,1) - \eps / 4}.\]

Let $\rho < \rho_0$, $t \in I$, and $x \in \R$.  
By \eqref{lemma_dimension_of_cantor_sets_and_measures} and the fact that $\rho_0$ is sufficiently small, every ball of radius $\rho$ centered at a point of $\supp \mu$ has $\mu$-mass greater than $\rho^{\dim \mu + \eps / 4}$. 
Therefore,
\[\NC \big( (\supp \mu) \cap \Pi_t^{-1}(B(x, \rho)), 2\rho \big) \cdot \rho^{\dim \mu + \eps / 4} \leq \mu \big( \Pi_t^{-1}(B(x, 2\rho)) \big) \leq \rho^{\min(\dim \mu,1) - \eps / 4}.\]
It follows now from the fact that $X \times Y \subseteq \supp \mu$ and \cref{lemma_metric_entropy_at_scaled_scale} that
\begin{align*}
\NC \big( (X \times Y) \cap \Pi_t^{-1}(B(x, \rho)), \rho \big) &\leq \NC \big( (\supp \mu) \cap \Pi_t^{-1}(B(x, \rho)), \rho \big) \\
& \ll \NC \big( (\supp \mu) \cap \Pi_t^{-1}(B(x, \rho)), 2\rho \big) \\
& \leq \rho^{\min(\dim \mu,1) - \dim \mu - \eps / 2} \\
&= \rho^{-(\max(0, \dim \mu - 1) + \eps / 2)} \\
&\leq \rho^{-(\ogamma + 3\eps/4)}.
\end{align*}
The limit in the conclusion of the theorem follows.
\end{proof}

The following corollary is formulated in a way that will make it convenient to apply in the integer setting.

\begin{corollary}
\label{theorem_main_continuous_intersections_v2}
Let $r$ and $s$ be multiplicatively independent positive integers, and let $X, Y \subseteq [0,1]$ be $\times r$- and $\times s$-invariant sets, respectively. Define $\ogamma = \max(0, \dim X + \dim Y -1 )$. For all compact $I \subseteq \R \setminus \{0\}$ and all $\eps > 0$,
\[\lim_{\rho \to 0^+} \ \sup_{\substack{\lambda, \eta \in I \\ \sigma, \tau \in \R}} \ \frac{ \NC \big( \big[\lambda X + \tau \big]_{\rho} \cap \big[\eta Y + \sigma \big]_{\rho}, \rho \big) } {\rho^{-(\ogamma + \eps)} } = 0.\]
\end{corollary}
Note that, taking fixed $\lambda, \eta, \sigma$ and $\tau=0$ this corollary recovers the Shermkin-Wu theorem encapsulated in \eqref{eqn_furstenberg_intersection}.
\begin{proof}
Let $I \subseteq \R \setminus \{0\}$ be compact and $\eps > 0$. Denote by $\pi_1: (x,y) \mapsto x$ the first coordinate projection.  The following facts are straightforward to verify:
\begin{itemize}
    \item $\lambda [X]_{\rho} = [\lambda X]_{|\lambda| \rho}$;
    \item $[X + \tau]_\rho = [X]_\rho + \tau$;
    \item $[X]_\rho \cap ([\eta Y]_{\rho} + \sigma) = \pi_1 \big( [X \times Y]_\rho \cap \Pi_{-\eta}^{-1} (\sigma) \big)$, using that $X \times Y$ is equipped with the $L^1$ metric;
    \item if $\varphi$ has Lipschitz constant $L$, then $[X]_\rho \cap \varphi^{-1}(\sigma) \subseteq [X \cap \varphi^{-1} B(\sigma, L \rho)]_\rho$.
\end{itemize}
Using these facts in order, we see that there exist $c_1, c_2 > 1$ depending only on $I$ such that
\begin{align}
\label{eqn_intersection_subset_inclusion}
\begin{aligned}
    [\lambda X + \tau]_\rho \cap [\eta Y + \sigma]_\rho & \subseteq \lambda \left( [X]_{c_1\rho} \cap \left (\left[ \frac{\eta}{\lambda} Y \right]_{c_1\rho} + \frac{\sigma - \tau}{\lambda} \right)\right) + \tau\\
    &=\lambda \pi_1 \left( [X \times Y]_{c_1\rho} \cap \Pi_{-\eta / \lambda}^{-1} \left( \frac{\sigma - \tau}{\lambda}\right) \right) + \tau \\
    &\subseteq \lambda \pi_1 \left( \left[ (X \times Y) \cap \Pi_{-\eta / \lambda}^{-1}  B \left( \frac{\sigma - \tau}{\lambda}, c_1c_2 \rho \right) \right]_{c_1 c_2 \rho} \right) + \tau.
\end{aligned}
\end{align}
We have need for four more easily-verified facts:
\begin{itemize}
    \item $\NC ( Z + \tau, \rho) = \NC (Z, \rho)$;
    \item $\NC ( \lambda Z, \rho) = \NC (Z, \rho / |\lambda|)$;
    \item $\NC( \pi_1(Z), \rho) \leq \NC( Z, \rho)$;
    \item $\NC ([Z]_\delta, \rho) \leq \delta / \rho \NC(Z, \rho)$.
\end{itemize}
Applying $\NC( \, \cdot \, , \rho)$ to both sides of \eqref{eqn_intersection_subset_inclusion} and using the preceding facts in order, we have that there exists $c_3 > 1$ depending only on $I$ such that
\[\NC \big( \big[\lambda X + \tau \big]_{\rho} \cap \big[\eta Y + \sigma \big]_{\rho}, \rho \big) \leq c_3 \NC \big( (X \times Y) \cap \Pi_{-\eta / \lambda}^{-1}  B \big( (\sigma - \tau) / \lambda, c_3 \rho \big), \rho / c_3 \big).\]
The conclusion of the corollary now follows from \cref{thm_size_of_support_on_fibers_for_inv_sets} by appealing to \cref{lemma_metric_entropy_at_scaled_scale}.
\end{proof}

\subsection{Additive transversality of sumsets in the reals}
\label{sec_main_continuous_sumset_results}

In this subsection, we use \cref{thm_size_of_support_on_fibers_for_inv_sets} to show that sets which are multiplicatively invariant with respect to multiplicatively independent bases are transverse in an additive combinatorial sense.  The core ideas here appear in \cite[Corollary 7.4]{shmerkin}, and we develop it in the context of the discrete Hausdorff dimension here.

The following lemma is a packing number analogue of the useful fact that if the fibers of a map $X \to Y$ between finite sets $X$ and $Y$ are uniformly bounded in cardinality, then the image of the map must be large.

\begin{lemma}
\label{lemma_bounded_metric_entropy_fibers}
Let $\varphi: \R^d \to \R^k$, $X \subseteq \R^d$ be bounded, and $\rho > 0$.  If $W > 0$ is such that for all $x \in \R^k$,
\begin{align*}
    \NC \Big( X \cap \varphi^{-1} \big(B (x, 2\rho)\big)\,,\, \rho \Big) \leq W,
\end{align*}
then $\NC \big(\varphi(X), \rho \big) \geq \NC \big( X, \rho \big) / W$.
\end{lemma}

\begin{proof}
Let $X'$ be a $\rho$-separated subset of $X$ of maximal cardinality so that $|X'| = \NC(X, \rho)$.  Since $\varphi(X')$ is covered by $\NC( \varphi(X'), \rho)$-many balls of radius $2\rho$, the set $X'$ is covered by $\NC( \varphi(X'), \rho)$-many pre-images of balls of radius $2\rho$ under $\varphi$.  Thus, there exists $x \in \R^k$ such that
\[\frac{|X'|}{\NC(\varphi(X'),\rho)} \leq \big| X' \cap \varphi^{-1} (B (x, 2\rho)) \big| \leq \NC \big( X \cap \varphi^{-1} (B (x, 2\rho)), \rho \big) \leq W.\]
It follows that
\[\frac{\NC(X, \rho)}{W} = \frac{|X'|}{W} \leq \NC(\varphi(X'),\rho) \leq \NC(\varphi(X),\rho),\]
as was to be shown.
\end{proof}

\begin{theorem}
\label{theorem_main_continuous_sumsets_v2}
Let $r$ and $s$ be multiplicatively independent positive integers, and let $X, Y \subseteq [0,1]$ be $\times r$- and $\times s$-invariant sets, respectively. Define $\ogamma = \max(0, \dimh X + \dimh Y - 1)$.  For all compact $I \subseteq \R \setminus \{0\}$, all $\eps > 0$, all $0 \leq \gamma \leq 1$, all sufficiently small $\rho > 0$ (depending on $X, Y, I$, $\eps$, and $\gamma$), all compact, non-empty $X' \subseteq X$, $Y' \subseteq Y$, and all $\lambda, \eta \in I$,
\begin{align}
\label{intro_lower_bound_on_metric_entropy_in_subsets_main_theorem_v2}
\NC \big( \lambda X' + \eta Y' , \rho \big) &\geq \frac{\NC \big( X' \times Y', \rho \big)}{\rho^{-(\ogamma + \eps)}}, \text{ and}\\
\label{intro_lower_bound_on_hausdorff_content_in_subsets_main_theorem_v2}
\HC_{\geq \rho}^{\gamma} \big( \lambda X' + \eta Y' \big) &\gg_{I,\gamma,\eps}  \HC_{\geq \rho}^{\gamma + \ogamma + \eps} \big( X' \times Y' \big).
\end{align}
\end{theorem}

\begin{proof}
It suffices by dilating, appealing to \cref{lemma_metric_entropy_at_scaled_scale}, and absorbing asymptotic constants into the $\rho^\eps$ term to prove the following: for all compact $I \subseteq \R \setminus \{0\}$, all $\eps > 0$, all $0 \leq \gamma \leq 1$, all sufficiently small $\rho_0 > 0$ (depending on $I$, $\eps$, and $\gamma$), all compact, non-empty $X' \subseteq X$, $Y' \subseteq Y$, all $t \in I$, and all $0 < \rho < \rho_0$,
\begin{align}
\label{intro_lower_bound_on_metric_entropy_in_subsets_main_theorem_v3}
\NC \big( \Pi_t( X' \times Y') , \rho \big) &\geq \frac{\NC \big( X' \times Y', \rho \big)}{\rho^{-(\ogamma + \eps)}}, \text{ and}\\
\label{intro_lower_bound_on_hausdorff_content_in_subsets_main_theorem_v3}
\HC_{\geq \rho}^{\gamma} \big( \Pi_t( X' \times Y') \big) &\gg \rho_0 \HC_{\geq \rho}^{\gamma + \ogamma + \eps} \big( X' \times Y' \big),
\end{align}
where, recall, $\Pi_t(x,y) = x + ty$.

Let $I \subseteq \R \setminus \{0\}$ be compact, $\eps > 0$, and $0 \leq \gamma \leq 1$.  It follows by \cref{thm_size_of_support_on_fibers_for_inv_sets} (with $\eps/2$ as $\eps$) that for all sufficiently small $\rho > 0$, all $t \in I$, and all $x \in \R$,
\begin{align*}
    \NC \big( (X \times Y) \cap \Pi_t^{-1}(B(x, 2\rho)), 2\rho \big)  \leq (2\rho)^{-(\ogamma + \eps / 2)}.
\end{align*}
Fix such a sufficiently small $0< \rho_0 < 1$, and ensure also that it is small enough so that $\rho_0^{-\eps / 2}$ is greater than the asymptotic constant appearing in \cref{lemma_metric_entropy_at_scaled_scale} (with $a = d = 2$). It follows that for all $0 < \rho < \rho_0$, all compact, non-empty $X' \subseteq X$, $Y' \subseteq Y$, all $t \in I$, and all $x \in \R$,
\begin{align}
\label{eqn_metric_entropy_small_in_tubes_for_xp_times_yp}
    \NC \big( (X' \times Y') \cap \Pi_t^{-1}(B(x, 2\rho)), \rho \big)  \leq \rho^{-(\ogamma + \eps)}.
\end{align}
Now \eqref{intro_lower_bound_on_metric_entropy_in_subsets_main_theorem_v3} follows immediately from \cref{lemma_bounded_metric_entropy_fibers} (with $X' \times Y'$ as $X$).

To show \eqref{intro_lower_bound_on_hausdorff_content_in_subsets_main_theorem_v3}, let $0 < \rho < \rho_0$ and $X' \subseteq X$, $Y' \subseteq Y$ be compact, non-empty. By \cref{lemma_finitary_frostman}, there exists a measure $\nu$ supported on $X' \times Y'$ with $\|\nu\| \geq \HC_{\geq \rho}^{\gamma + \ogamma + \eps} (X' \times Y')$ and such that for all $x \in \R^2$ and all $\delta \geq \rho$,
\begin{align}
\label{eqn_frostman_exp_on_nu_measure}
    \nu \big(B(x, \delta / 2) \big) \leq c_1 \delta^{\gamma + \ogamma + \eps},
\end{align}
where $c_1 > 1$ is an absolute constant.  Using the fact that $\supp \nu \subseteq X' \times Y' \subseteq X \times Y$, it follows from \eqref{eqn_metric_entropy_small_in_tubes_for_xp_times_yp} that for all $0 < \delta <  \rho_0$, all $t \in I$, and all $x \in \R$,
\begin{align}
\label{eqn_derived_from_metric_entropy_small_in_tubes_for_xp_times_yp}
    \NC \big( \supp \nu \cap \Pi_{t}^{-1}(B(x, \delta / 2)), \delta / 4 \big) \leq c_2 \delta^{-(\ogamma + \eps)},
\end{align}
where $c_2 > 1$ is an absolute constant.

The inequality in \eqref{eqn_derived_from_metric_entropy_small_in_tubes_for_xp_times_yp} implies that as long as $\delta < \rho_0$, the part of the support of $\nu$ contained in any tube $\Pi_{t}^{-1} (B(x, \delta / 2))$ can be covered by $c_2\delta^{-(\ogamma + \eps )}$ many balls of diameter $\delta$. The inequality in \eqref{eqn_frostman_exp_on_nu_measure} says that as long as $\delta \geq \rho$, each of those balls has $\nu$-measure at most $c_1\delta^{\gamma + \ogamma + \eps}$.  Therefore, we have that for all $\rho \leq \delta < \rho_0$, all $t \in I$, and all $x \in \R$,
\begin{align}
\label{eqn_measure_of_projected_balls}
    \nu \big( \Pi_{t}^{-1}(B(x, \delta / 2)) \big) \leq c_1\delta^{\gamma + \ogamma + \eps} c_2 \delta^{-(\ogamma + \eps)} = c_1 c_2 \delta^{\gamma}.
\end{align}

We aim now to deduce \eqref{intro_lower_bound_on_hausdorff_content_in_subsets_main_theorem_v3} from \eqref{eqn_measure_of_projected_balls}. Let $0 < \rho < \rho_0$, and let $\cup_i B_i$ be a cover of $\Pi_t (X' \times Y')$ by open balls of diameter at least $\rho$.  If some ball $B_i$ is such that $\diam B_i \geq \rho_0$, then $\sum_i (\diam B_i)^{\gamma} \geq \rho_0^\gamma \geq \rho_0$.  Otherwise, all balls in the cover have diameter less than $\rho_0$, and it follows then from \eqref{eqn_measure_of_projected_balls} that
\[c_1c_2 \sum_{i} ( \diam B_i )^{\gamma} \geq \|\Pi_t \nu\| = \|\nu\| \geq \HC_{\geq \rho}^{\gamma + \ogamma + \eps} (X' \times Y').\]
In either case, we have that
\[\sum_{i} ( \diam B_i )^{\gamma} \geq \min \big( \rho_0, (c_1c_2)^{-1} \HC_{\geq \rho}^{\gamma + \ogamma + \eps} (X' \times Y') \big) \geq \rho_0 (c_1c_2)^{-1} \HC_{\geq \rho}^{\gamma + \ogamma + \eps} (X' \times Y'),\]
where the second inequality follows from the fact that both quantities in the minimum are at most $1$.  Since the cover was arbitrary, we conclude the inequality in \eqref{intro_lower_bound_on_hausdorff_content_in_subsets_main_theorem_v3}.
\end{proof}

In the statement of the following corollary, it is useful to recall \cref{dimension_of_product_of_invariant_sets}: all of the notions of dimension for $X$, $Y$, and $X \times Y$ coincide, and $\dim(X \times Y) = \dim X + \dim Y$.

\begin{corollary}\label{cor_Hochman_Shmerkin_for_subsets}
Let $r$ and $s$ be multiplicatively independent positive integers, and let $X, Y \subseteq [0,1]$ be $\times r$- and $\times s$-invariant sets, respectively. For all $\dim \in \{\ldimdm, \udimdm, \dimh\}$, for all compact subsets $X' \subseteq X$ and $Y' \subseteq Y$, and for all $\lambda, \eta \in \R \setminus \{0\}$, 
\begin{itemize}
    \item if $\dim X + \dim Y \leq 1$, then
\begin{align}
\label{cor_ldimdh_for_arbitrary_subsets_sumsets_case_1}
    \dim \big(  \lambda X' + \eta Y'  \big) = \dim \big( X' \times Y' \big);
\end{align}
    \item if $\dim X + \dim Y > 1$, then
\begin{align}
\label{cor_ldimdh_for_arbitrary_subsets_sumsets_case_2}
    \dim \big(  \lambda X' + \eta Y'  \big) \geq \dim \big( X' \times Y' \big) - \dim \big(X \times Y \big) + 1.
\end{align}
\end{itemize}
\end{corollary}

Note that \cref{cor_Hochman_Shmerkin_for_subsets} extends the theorem of Hochman and Shmerkin encapsulated by \eqref{eqn_in_thm_HS_localentropy}.
Indeed, setting $X' = X$ and $Y' = Y$, it follows from \eqref{cor_ldimdh_for_arbitrary_subsets_sumsets_case_1} and \eqref{cor_ldimdh_for_arbitrary_subsets_sumsets_case_2} that $\dim \big( \lambda X + \eta Y  \big) \geq \min \big(1, \dim(X \times Y) \big)$.  
Using the fact that $(x,y) \mapsto \lambda x + \eta y$ is Lipschitz, the bounds in \cref{lemma_hausdorff_content_bound_under_lipschitz_map} immediately give the required upper bounds to yield equality in \eqref{eqn_in_thm_HS_localentropy}.

\begin{proof}

Define $\ogamma = \max(0, \dimh (X \times Y) - 1)$, and let $X' \subseteq X$ and $Y' \subseteq Y$. To show \eqref{cor_ldimdh_for_arbitrary_subsets_sumsets_case_1} and \eqref{cor_ldimdh_for_arbitrary_subsets_sumsets_case_2},
it suffices to show
\begin{align}
    \label{eqn_sufficient_to_show_in_corollary}
    \dim \big( \lambda X' + \eta Y' \big) \geq \dim \big( X' \times Y' \big) - \ogamma.
\end{align}
Indeed, this is the lower bound in \eqref{cor_ldimdh_for_arbitrary_subsets_sumsets_case_2} and the upper bound derived from \cref{lemma_hausdorff_content_bound_under_lipschitz_map} combined with this lower bound gives the desired equality in \eqref{cor_ldimdh_for_arbitrary_subsets_sumsets_case_1}.

Let $\dim \in \{\ldimm, \udimm, \dimh \}$ and $\lambda, \eta \in (0,\infty)$.
If $\dim(X' \times Y') \leq\ogamma$, the conclusion is immediate, so we can proceed under the assumption that $\dim(X' \times Y') > \ogamma$.

Let $\eps > 0$, and let $\gamma = \dim(X' \times Y') - \ogamma - 2\eps$. 
It follows from \cref{theorem_main_continuous_sumsets_v2} that there exists a small $\rho_0 > 0$ such that for all $0 < \rho < \rho_0$,
\begin{gather*}
    \frac{\NC \big( \lambda X' + \eta Y' , \rho \big)}{\rho^{-\gamma}} \geq \frac{\NC \big( X' \times Y', \rho \big)}{\rho^{-(\gamma+\ogamma + \eps)}},\\
    \HC_{\geq \rho}^{\gamma} \big( \lambda X' + \eta Y' \big) \geq \rho_0 \HC_{\geq \rho}^{\gamma + \ogamma + \eps} \big( X' \times Y' \big).
\end{gather*}
Consider the first inequality if $\dim$ is the Minkowski dimension and the second inequality if $\dim$ is the Hausdorff dimension.  Because $\gamma + \ogamma + \eps = \dim(A' \times B') - \eps$, the limit infimum (if $\dim$ is a lower dimension) or limit supremum (if $\dim$ is an upper dimension) as $N$ tends to infinity of the right hand side is positive. It follows that
\[\dim \big( \lambda X' + \eta Y' \big) \geq \dim \big( X' \times Y' \big) - \ogamma - \eps.\]
The inequality in \eqref{eqn_sufficient_to_show_in_corollary} now follow from the fact that $\eps >0$ was arbitrary, concluding the proof.
\end{proof}

\section{Discrete fractal geometry and multiplicatively invariant subsets of the integers}
\label{sec_discrete_fractal_geometry}
In this section, we introduce the notation and terminology involved in the study of fractal geometry in the positive integers and develop the basic results concerning multiplicatively invariant subsets.  To prove the results in this section and the transversality results in the next, we relate $\times r$-invariant subsets of the integers to symbolic subshifts on $r$ symbols and to $\times r$-invariant subsets of $[0,1]$.

\subsection{Notions of dimension for subsets of integers}\label{sec_discrete_dimensions}

To measure the size of subsets of $\Nz$, we will make use of the (upper and lower) mass dimension and the (upper and lower) discrete Hausdorff dimension, which were introduced in \cref{sec_intro_integers}, but which we recall for a more detailed discussion in this section. The upper and lower mass dimensions and the upper Hausdorff dimension are also treated systematically in \cite{barlow_taylor_92}; we will state the properties we require from these quantities with the aim of making this presentation self-contained. These dimensions join a bevy of other natural notions of dimension for subsets of the integers, integer lattices, and more general discrete sets; see \cite{naudts1,naudts2,barlow_taylor_1989,iosevich_rudnev_tuero_2014,limamoreira}.

\begin{definition}
\label{def_mass_and_h_dimensions}
Let $A \subseteq \Nz^d$ be non-empty.
\begin{itemize}
    \item The \emph{lower mass dimension} of $A$ is
\[\ldimdm A \,=\, \liminf_{N \to \infty} \frac{\log \big| A \cap [0,N)^d \big|}{\log N}.\]
The \emph{upper mass dimension}, $\udimdm A$, is defined analogously with a limit supremum in place of the limit infimum.  If $\ldimdm A = \udimdm A$, then this value is the \emph{mass dimension} of $A$, $\dimdm A$.

\item The \emph{lower discrete Hausdorff dimension} of $A$ is
\[\ldimdh A = \sup \left\{\gamma \geq 0 \ \middle| \ \liminf_{N \to \infty} \frac{\HC_{\geq 1}^\gamma \big(A \cap [0,N)^d \big)}{N^\gamma} > 0 \right\}.\]
The \emph{upper discrete Hausdorff dimension}, $\udimdh A$, is defined analogously with a limit supremum in place of the limit infimum.  If $\ldimdh A = \udimdh A$, then this value is the \emph{discrete Hausdorff dimension} of $A$, $\dimdh A$.
\end{itemize}
\end{definition}

As the notation suggests, the mass and discrete Hausdorff dimensions are defined in analogy to the Minkowski and Hausdorff dimensions, respectively.  The analogy becomes clearer on noting that
\begin{align}
    \label{eqn_mass-dim_to_Mink-dim}
    \big|A \cap [0,N)^d \big| &= \NC \left(  \frac{A\cap [0,N)^d}{N},\,  N^{-1} \right), \\
    \label{eqn_disc_to_cont_Haus-dim}
    \frac{\HC_{\geq 1}^\gamma \big(A \cap [0,N)^d \big)}{N^\gamma} &= \HC_{\geq N^{-1}}^\gamma \left( \frac{A\cap [0,N)^d}{N} \right),
\end{align}
so that the mass and discrete Hausdorff dimensions are capturing, in some sense, the Minkowski and Hausdorff dimensions of the sequence of sets $N \mapsto A/ N$ in the unit cube.

As a word of caution, note that our terminology does not match exactly with the terminology used in \cite{barlow_taylor_92}. What we call the upper discrete Hausdorff dimension is called $\dim_L$ in \cite{barlow_taylor_92} (see Lemma 2.3 in that paper), while the discrete Hausdorff dimension defined in that work does not appear in our work. Our choice of terminology is motivated by the connections drawn in our work between the discrete and continuous notions of dimension.

\begin{lemma}
\label{lem_discrete_dim_properties}
Let $A, B\subseteq \Nz^d$, $\lambda > 0$, and $\sigma \in \R^d$.
\begin{enumerate}[label=(\Roman*)]
    \item\label{itm_dim_I}
    For all $\dim \in \{\ldimdm, \udimdm, \ldimdh, \udimdh\}$, $\dim A \in [0,d]$.
    \item\label{itm_dim_II}
    For all $\dim \in \{\ldimdm, \udimdm, \ldimdh, \udimdh\}$, $\dim A = \dim \big( \lfloor \lambda A + \sigma \rfloor \big)$, where $\lfloor \lambda A + \sigma \rfloor=\{\lfloor \lambda n + \sigma \rfloor \ | \ n\in A \}$.
    \item\label{itm_dim_III} For all $\dim \in \{\udimdm, \udimdh\}$, $\dim (A \cup B) = \max \big( \dim A, \dim B \big)$.
    \item\label{itm_dim_III_point_5} For all $r \in \N$, $r \geq 2$,
    \begin{align*}
    \ldimdm A = \liminf_{N \to \infty} \frac{\log |A \cap [0,r^N)^d |}{N\log r},
    \end{align*}
    and the analogous statement with $\udimdm$ in place of $\ldimdm$ and limit supremum in place of limit infimum holds.
    \item\label{itm_dim_IV} For all $r \in \N$, $r \geq 2$,
    \begin{align*}
    \ldimdh A = \sup \left\{ \gamma \geq 0 \ \middle| \  \liminf_{N \to \infty} \frac{\HC_{\geq 1}^\gamma \big(A \cap [0,r^N)^d \big)}{r^{\gamma N}} > 0 \right\},
    \end{align*}
    and the analogous statement with $\udimdh$ in place of $\ldimdh$ and limit supremum in place of limit infimum holds.
\end{enumerate}
\end{lemma}

Note that the sets in \cref{discrete_examples} \ref{ex_discrete_set_two} below show that the statement in \ref{itm_dim_III} does not hold for the lower mass and lower discrete Hausdorff dimensions.

\begin{proof}
The statements in \ref{itm_dim_I} through \ref{itm_dim_III_point_5} follow from straightforward calculations which are left to the reader.  

Both of the statements in \ref{itm_dim_IV} follow from \eqref{eqn_disc_to_cont_Haus-dim} and the fact that for all $\gamma \geq 0$ and all $r^K \leq N \leq r^{K+1}$,
\[\frac{\HC_{\geq 1}^\gamma \big(A \cap [0,r^K)^d \big)}{r^{K\gamma}} \leq r^\gamma \frac{\HC_{\geq 1}^\gamma \big(A \cap [0,N)^d \big)}{N^\gamma} \leq r^{2 \gamma} \frac{ \HC_{\geq 1}^\gamma \big(A \cap [0,r^{K+1})^d \big)}{r^{(K+1)\gamma}}.\]
Indeed, this shows that the limit infimum (resp. limit supremum) of the sequence $N \mapsto \HC_{\geq 1}^\gamma \big(A \cap [0,r^N) \big) / r^{N\gamma}$ is non-zero if and only if the limit infimum (resp. limit supremum) of the sequence $N \mapsto \HC_{\geq 1}^\gamma \big(A \cap [0,N) \big) / N^\gamma$ is non-zero.
\end{proof}

\begin{lemma}
\label{lemma_dim_comparisons}
For all $A \subseteq \Nz^d$,
\begin{gather*}
    \ldimdh A \leq \ldimdm A \leq \udimdm A,\\
    \ldimdh A \leq \udimdh A \leq \udimdm A,
\end{gather*}
and no other comparisons are possible in general.
\end{lemma}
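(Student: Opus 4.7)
The plan is to establish the four inequalities first and then address the incomparability assertion by producing explicit examples. Of the four inequalities, $\ldimdh A \leq \udimdh A$ and $\ldimdm A \leq \udimdm A$ are immediate from the defining $\liminf$ being at most the corresponding $\limsup$. The remaining two, $\ldimdh A \leq \ldimdm A$ and $\udimdh A \leq \udimdm A$, both reduce to a single pointwise bound: covering each element of $A \cap [0, N)$ by an open ball of diameter one is an admissible cover in the definition of $\HC_{\geq 1}^\gamma$, so
\[
\HC_{\geq 1}^\gamma \big( A \cap [0, N) \big) \,\leq\, |A \cap [0, N)|
\]
for every $\gamma \geq 0$ and every $N \in \N$. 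Dividing both sides by $N^\gamma$ and passing to $\limsup$ (respectively $\liminf$) as $N \to \infty$ yields the two dimension comparisons simultaneously.

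For the incomparability half, the four inequalities already connect every pair of dimensions except $(\ldimdm, \udimdh)$, so it suffices to exhibit one set with $\ldimdm < \udimdh$ and another with $\udimdh < \ldimdm$. I would use ``block sets'' of the form $A = \bigcup_k [a_k, a_k + L_k)$ with disjoint integer blocks. Taking $a_k = L_k = 2^{2^k}$ produces a set that looks like a solid interval along the subsequence $N = 2 a_k$, forcing $\udimdh A = 1$ via the lower bound $\HC_{\geq 1}^\gamma\big([a, a+L)\big) \gg L^\gamma$ for integer intervals (a consequence of superadditivity of $x \mapsto x^\gamma$ for $\gamma \leq 1$), while being sparse along $N = a_{k+1}$, capping $\ldimdm A$ at $1/2$. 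Conversely, taking $a_k = r^{2k}$ and $L_k = r^k$ yields blocks small enough that one ball of diameter $r^k$ covers each one: a geometric-series computation gives $\HC_{\geq 1}^\gamma(A \cap [0, N)) \ll N^{\gamma/2}$ uniformly in $N$, forcing $\udimdh A = 0$, while a direct point count gives $|A \cap [0, N)| \asymp N^{1/2}$ and hence $\ldimdm A = 1/2$. Examples of essentially this type are already recorded in \cref{discrete_examples}, and the needed inequalities can be read off from the definitions there.

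The only technical point I anticipate is that the dimension estimates above are most naturally verified along the subsequence of block endpoints, whereas the definitions involve all $N$. Because consecutive endpoints in these constructions differ by at most a bounded power of the base, both $|A \cap [0, N)|$ and $\HC_{\geq 1}^\gamma(A \cap [0, N))$ differ from their values at the nearest endpoint by at most a constant factor, so the subsequential computation determines the true $\liminf$ and $\limsup$. This bookkeeping aside, the entire argument is elementary; the only substantive idea is the unit-ball cover used for the inequalities.
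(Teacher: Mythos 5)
Your proposal is correct and takes essentially the same route as the paper: the two nontrivial inequalities come from the same unit-ball cover giving $\HC_{\geq 1}^\gamma\big(A\cap[0,N)\big)\leq |A\cap[0,N)|$, and the incomparability of $\ldimdm$ and $\udimdh$ is settled by block-type examples of exactly the kind recorded in \cref{discrete_examples}, which (as you observe) suffices to rule out every remaining comparison via the chain of inequalities. Two cosmetic remarks only: the lower bound for $\HC_{\geq 1}^\gamma$ of an integer interval rests on subadditivity (not superadditivity) of $x\mapsto x^\gamma$ for $\gamma\leq 1$, and in your first example consecutive block endpoints are not within a bounded factor, though no interpolation is needed there since only subsequential bounds are claimed.
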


\begin{proof}
It is immediate from the definitions that $\ldimdm A \leq \udimdm A$ and $\ldimdh A \leq \udimdh A$, and the set in \cref{discrete_examples} \ref{ex_discrete_set_one} below shows that neither of these inequalities are, in general, equalities.

To see that $\ldimdh A \leq \ldimdm A$ and that $\udimdh A \leq \udimdm A$, note that by covering $A \cap [0,N)^d$ by $|A \cap [0,N)^d|$ many balls of diameter $1$ it follows that
\[\frac{\HC_{\geq 1}^\gamma \big(A \cap [0,N)^d \big)}{N^\gamma} \leq \frac{|A \cap [0,N)^d|}{N^\gamma}.\]
If $\gamma > \ldimdm A$ (resp. $\gamma > \udimdm A$), then the limit infimum (resp. limit supremum) of the right hand side is zero, implying that $\gamma \geq \ldimdh A$ (resp. $\gamma \geq \udimdh A$). It follows that $\ldimdh A \leq \ldimdm A$ and $\udimdh A \leq \udimdm A$.  The set in \cref{discrete_examples} \ref{ex_discrete_set_three} below shows that neither of these inequalities are, in general, equalities.

To see that no other comparisons are possible, it suffices to show that there can in general be no comparison between $\udimdh$ and $\ldimdm$.  This is demonstrated by the sets in \cref{discrete_examples} \ref{ex_discrete_set_one} and \ref{ex_discrete_set_three} below.
\end{proof}

The following examples are meant to illustrate the extent to which the mass and discrete Hausdorff dimensions relate for subsets of $\N_0$. These examples do not feature the type of structures that we are concerned with in this work, so we leave some of the details to the reader.

\begin{examples}\label{discrete_examples} \leavevmode
\begin{enumerate}[label=(\roman*)]
\item \label{ex_discrete_set_one} Let $(x_n)_{n=0}^\infty \subseteq \Nz$ be any sequence which satisfies $\lim_{n \to \infty}\log (x_{n+1} - x_{n} ) / \log x_{n+1} = 1$, and define
\[A \defeq \{0\} \cup \bigcup_{n = 0}^\infty \{x_{2n}, x_{2n}+1,\ldots, x_{2n+1} \}.\]
It is easy to check that $\ldimdm A = \ldimdh A = 0$ and that $\udimdm A = \udimdh A = 1$.

\item \label{ex_discrete_set_two} Let $A$ be the set from \ref{ex_discrete_set_one}. Put $B = \{0\} \cup \big(\Nz \setminus A \big)$. Then $\ldimdm B = \ldimdh B = 0$ while $\udimdm B = \udimdh B = 1$, and $A+B = A\cup B = \Nz$.

\item \label{ex_discrete_set_three} Define
\[A = \{0, \ldots, 16\} \cup \bigcup_{n=2}^\infty \big\{ 2^n, \ldots, 2^n + \lfloor 2^{n-n/\log n} \rfloor \big\}.\]
It is quick to check that the mass dimension of $A$ exists and $\dimdm A = 1$. On the other hand, by covering $A$ with the intervals in its definition, it can be shown that the discrete Hausdorff dimension of $A$ exists and $\dimdh A = 0$.
\end{enumerate}
\end{examples}

We conclude this section by proving some basic upper and lower bounds on the dimension of product sets.

\begin{lemma}
\label{lemma_dim_bounds_on_product_sets}
For all non-empty $A_1, \ldots, A_d \subseteq \N_0$,
\begin{align}
\label{eqn_upper_bound_on_mass_dim_of_product}
    \udimdm \big( A_1 \times \cdots \times A_d \big) &\leq \sum_{i=1}^d \udimdm A_i,\\
\label{eqn_lower_bound_on_h_dim_of_product}
\ldimdh \big( A_1 \times \cdots \times A_d \big) &\geq \sum_{i=1}^d \ldimdh A_i.
\end{align}
In particular, if $\ldimdh A_i = \udimdm A_i$ for each $i \in \{1, \ldots, d\}$, then for all $\dim \in \{\ldimdm,\allowbreak \udimdm,\allowbreak \ldimdh, \udimdh\}$, 
\[\dim \big( A_1 \times \cdots \times A_d \big) = \dim A_1 + \cdots + \dim A_d.\]
\end{lemma}

\begin{proof}
The inequality in \eqref{eqn_upper_bound_on_mass_dim_of_product} is immediate from the definition of upper mass dimension.  To prove the inequality in \eqref{eqn_lower_bound_on_h_dim_of_product}, define $\gamma_i = \ldimdh A_i$ and $\gamma = \sum_{i=1}^d \gamma_i$.  Define $A = A_1 \times \cdots \times A_d$.

Let $\eps > 0$ and $N \in \N$.  It follows by \cref{lemma_finitary_frostman} that there exists a measure $\mu_i$ supported on $(A_i / N) \cap [0,1)$ with $\| \mu_i\| \geq \HC_{\geq N^{-1}}^{\gamma_i-\eps} \big( (A_i / N) \cap [0,1) \big)$ and such that for all balls $B$ of diameter at least $N^{-1}$, $\mu_i(B) \leq c \diam(B)^{\gamma_i - \eps}$.

Consider the product measure $\mu = \mu_1 \times \cdots \times \mu_d$; it is supported on the set $A$ and has the property that for all balls $B$ of diameter at least $N^{-1}$, $\mu(B) \leq c^d \diam(B)^{\gamma - d \eps}$.  It follows by \cref{lem_consequence_of_discrete_frostman} and \eqref{eqn_disc_to_cont_Haus-dim} that
\begin{align*}
    \frac{\HC_{\geq 1}^{\gamma- d\eps} ( A \cap [0,N)^d )}{N^{\gamma - d\eps}} &= \HC_{\geq N^{-1}}^{\gamma - d \eps} \left( \frac{A}N \cap [0,1)^d \right) \\
    &\geq c^{-d} \prod_{i=1}^d \HC_{\geq N^{-1}}^{\gamma_i-\eps} \big( (A_i / N) \cap [0,1) \big)\\
    &= c^{-d} \prod_{i=1}^d \frac{\HC_{\geq 1}^{\gamma_i-\eps} (A_i \cap [0,N))}{N^{\gamma_i - \eps}}.
\end{align*}
By the definition of the lower discrete Hausdorff dimension, the limit infimum as $N$ tends to infinity of the right hand side of the previous inequality is positive, whereby $\ldimdh A \geq \gamma - d \eps$.  The conclusion of the lemma follows since $\eps > 0$ was arbitrary.
\end{proof}

\subsection{Dimension regularity of multiplicatively invariant sets}
\label{sec_first_dim_results}

In this section, we prove that the mass and discrete Hausdorff dimensions of a multiplicatively invariant set (cf. \cref{def_xr_invariant_integers}) exist and coincide.
This is accomplished by adapting an argument of Furstenberg \cite[Prop. III.1]{furstenbergdisjointness} from the continuous setting.

\begin{proposition}
\label{lemma_dimensions_coincide_for_invariant_sets}
If $A \subseteq \N_0$ is multiplicatively invariant (see \cref{def_xr_invariant_integers}), then
\[\ldimdh A = \udimdh A = \ldimdm A = \udimdm A.\]
In particular, the mass and discrete Hausdorff dimensions of $A$ exist and coincide.
\end{proposition}

Before the proof, we introduce some notation that will be useful throughout this section and the following ones.  
Fix $r \in \N$, $r \geq 2$, and denote by $\Lambda_r$ the \emph{alphabet} $\{0, \ldots, r-1\}$.  An element $w \in \Lambda_r^\ell$ is a \emph{word} of \emph{length} $|w| = \ell$. The set of all \emph{finite words} is $\Lambda_r^* = \cup_{\ell = 0}^\infty \Lambda_r^\ell$, and the set of all \emph{infinite words} is $\Lambda_r^{\Nz}$.  
The \emph{empty word} is the sole element of the set $\Lambda_r^0$.
The \emph{concatenation} of the word $w \in \Lambda_r^\ell$ with the word $v \in \Lambda_r^k$ is denoted by juxtaposition: the word $wv$ is an element of $\Lambda_r^{\ell+k}$.  
We write $w^k$ for the word $w$ concatenated with itself $k$ many times.  
Finally, we write $w = w_0 \cdots w_{\ell-1}$ to indicate that the letters of $w$ are $w_0, \ldots, w_{\ell-1} \in \Lambda_r$, in that order.  

For $w = w_0 \cdots w_{\ell-1} \in \Lambda_r^{\ell}$, define an element in $\Nz$ by
\[(w)_r \defeq w_0 r^{\ell-1} + w_1 r^{\ell-2} + \cdots + w_{\ell-2} r + w_{\ell-1}.\]
The function $( \, \cdot \, )_r: \Lambda_r^* \to \Nz$ serves as the primary link between subsets of non-negative integers and words.  In the following subsection, we will use $( \, \cdot \, )_r$ to connect $\times r$-invariant subsets of $\Nz$ with symbolic subshifts.  Note that $( \, \cdot \, )_r$ is surjective, and is injective when restricted to $\Lambda_r^\ell$ for some $\ell \in \Nz$.

As a final ingredient before the proof of \cref{lemma_dimensions_coincide_for_invariant_sets}, we give an equivalent characterization of the lower discrete Hausdorff dimension, $\ldimdh$.

\begin{lemma}
\label{remark_recharacterization_of_ldim}
For all $A \subseteq \Nz$,
\begin{align*}
\ldimdh A = \sup \left\{ \gamma \geq 0 \ \middle| \  \liminf_{N \to \infty} \frac{\HC_{\geq 1}^{\gamma,*} \big(A \cap [0,r^N) \big)}{r^{N\gamma}} > 0 \right\},
\end{align*}
where $\HC_{\geq 1}^{\gamma,*} (X)$ is defined to be
\[\min \left\{ \sum_{i \in I} r^{d_i \gamma} \ \middle| \ X \subseteq \bigcup_{i \in I} \bigg( (w^{(i)}0^{d_i})_r + [0,r^{d_i}) \bigg), \ w^{(i)} \in \Lambda_r^*, \ d_i \in \Nz \right\}.\]
\end{lemma}

\begin{proof}
It suffices to show that for all finite $X \subseteq \Nz$, $\HC_{\geq 1}^{\gamma,*} (X) \asymp \HC_{\geq 1}^{\gamma} (X)$, and then appeal to \cref{lem_discrete_dim_properties} \ref{itm_dim_IV}.  That $\HC_{\geq 1}^{\gamma,*} (X) \geq \HC_{\geq 1}^{\gamma} (X)$ follows immediately from the definitions.  To show that $\HC_{\geq 1}^{\gamma,*} (X) \ll \HC_{\geq 1}^{\gamma} (X)$, use the fact that any interval in $\Nz$ of length $\ell$ can be covered by at most two intervals of the form $(w0^{d})_r + [0,r^{d})$, where $d = \lceil \log_r \ell \rceil$.
\end{proof}

\begin{proof}[Proof of \cref{lemma_dimensions_coincide_for_invariant_sets}]
Suppose $A \subseteq \Nz$ is $\times r$-invariant. 
Let $\gamma > \ldimdh A$. We will show that $\limsup_{M \to \infty} |A \cap [0,r^M)| / r^{M \gamma} < \infty$, from which it follows that $\udimdm A \leq \gamma$.  Since $\gamma > \ldimdh A$ is arbitrary, it will follow that $\udimdm A \leq \ldimdh A$. It will follow then from \cref{lemma_dim_comparisons} that $\ldimdh A=\udimdh A=\ldimdm A=\udimdm A$, which will conclude the proof of the lemma.

According to \cref{remark_recharacterization_of_ldim}, there exists $N \in \N$ and a collection of intervals $B_i = (w^{(i)} 0^{d_i})_r + [0,r^{d_i})$, $i \in I$, that cover $A \cap [0,r^N)$ and for which $\sum_{i \in I} r^{(d_i-N) \gamma} < 1$.
By prepending zeros onto each $w^{(i)}$, we may assume that $|w^{(i)}| + d_i = N$.  Note that for all $w \in \Lambda_r^{N}$, $(w)_r \in B_i$ if and only if $w = w^{(i)}w'$ for some $w' \in \Lambda_r^{d_i}$.

Let $M \in \N$, $M > N$, and let $n \in A \cap [0, r^M)$.  
Write $n = (w)_r$, where $w \in \Lambda_r^M$ (so that $w$ may have leading zeroes).  
Since $A$ is $\Phi_r$-invariant, $\Phi_r^{M-N}(n) = (w_1 \cdots w_N)_r \in A \cap [0, r^N)$. 
Since $A \cap [0,r^N) \subseteq \cup_i B_i$, there exists $i_1 \in I$ such that $(w_1 \cdots w_N)_r \in B_{i_1}$.  
It follows that $w = w^{(i_1)}w'$ for some $w' \in \Lambda_r^{M-d_{i_1}}$.  
Since $A$ is $\Psi_r$-invariant, applying $\Psi_r$ to $n$ between $0$ and $|w^{(i_1)}|$-many times (depending on how many initial zeroes there are in the word $w^{(i_1)}$) to $n$, we see that $(w')_r \in A$.  
Repeating the argument with $(w')_r \in A$, there exists $i_2 \in I$ such that $w' = w^{(i_2)}w''$ for some $w'' \in \Lambda_r^{M-d_{i_1}-d_{i_2}}$.  
Repeating further, we see that there exist $i_1, \ldots, i_k \in I$ such that $w = w^{(i_1)}\cdots w^{(i_k)}v$, where $v \in \Lambda_r^{< N}$.

Using the factorization of words $w \in \Lambda_r^M$ for which $(w)_r \in A$ described in the previous paragraph and recalling that $-|w^{(i)}| = d_i - N$, we see that
\begin{align*}
    \frac{\big| A \cap [0,2^M)\big|}{r^{M\gamma}} &= \sum_{w \in \Lambda_r^M \; : \; (w)_r \in A} r^{-|w|\gamma} \\
    &\leq \left( \sum_{v \in \Lambda_r^{< N}} r^{-|v|\gamma}\right) \left( 1 + \sum_{i_1 \in I} r^{(d_{i_1} - N) \gamma} + \sum_{i_1, i_2 \in I} r^{(d_{i_1} - N + d_{i_2} - N) \gamma}  + \cdots \right)\\
    &= \left( \sum_{v \in \Lambda_r^{< N}} r^{-|v|\gamma}\right) \left( 1-\sum_{i \in I} r^{(d_i - N) \gamma}\right)^{-1}.
\end{align*}
Since the final quantity is finite and independent of $M$, and since $M > N$ was arbitrary, it follows that $\limsup_{M \to \infty} |A \cap [0,r^M)| / r^{M \gamma} < \infty$, as was to be shown.
\end{proof}

\begin{corollary}
\label{cor_dim_of_products_of_invariant_sets}
If $A_1, \ldots, A_d \subseteq \Nz$ are multiplicatively invariant (with respect to any bases), then for all $\dim \in \{\ldimdm, \udimdm, \ldimdh, \udimdh\}$,
\[\dim \big( A_1 \times \cdots \times A_d \big) = \dim A_1 + \cdots + \dim A_d.\]
\end{corollary}

\begin{proof}
This follows immediately by combining \cref{lemma_dim_bounds_on_product_sets} and \cref{lemma_dimensions_coincide_for_invariant_sets}.
\end{proof}

\subsection{Connections to symbolic dynamics}
\label{sec_connection_symbolic_dynamcis}

Throughout this subsection, we use $\sigma$ to denote the left-shift on $\Lambda_r^{\Nz}$, which is defined by
\[
\sigma:  (w_n)_{n\in\Nz}\mapsto (w_{n+1})_{n\in\Nz}.
\]
We endow $\Lambda_r$ with the discrete topology and $\Lambda_r^{\Nz}$ with the product (or Tychonoff) topology. 
In the context of symbolic dynamics, any closed subset of $\Lambda_r^{\Nz}$ satisfying $\sigma(\Sigma) \subseteq \Sigma$ is called a \define{subshift}.
The \define{language set} associated to a subshift $\Sigma$ is the set of all the finite words, including the empty word, appearing in the elements of $\Sigma$, i.e.,
\[
\mathcal{L}(\Sigma)=\big\{w_0 \cdots w_{\ell - 1} \ \big| \ w = w_0 w_1 \cdots \in \Sigma,~\ell\in\Nz \big\}.
\]
The language set of any subshift can be naturally embedded into the integers in two ways, giving rise to the following definition.

\begin{definition}
\label{def_language-set}
The \define{$r$-language sets} associated to a subshift $\Sigma\subseteq \Lambda_r^{\Nz}$ are the sets $A_\Sigma,B_\Sigma\subseteq\Nz$ defined by
\begin{align*}
A_{\Sigma} &\,=\, \Big\{ (w_0 \cdots w_{\ell - 1})_r = w_0 r^{\ell-1} + w_1 r^{\ell-2} + \cdots + w_{\ell-2} r + w_{\ell-1} \ \big| \ w_0 \cdots w_{\ell - 1} \in \mathcal{L}(\Sigma)\Big\},\\
B_{\Sigma} &\,=\, \Big\{ (w_{\ell - 1} \cdots w_0)_r = w_{\ell-1} r^{\ell-1} + w_{\ell-2} r^{\ell-2} + \cdots + w_{1} r + w_{0} \ \big| \ w_0 \cdots w_{\ell - 1} \in \mathcal{L}(\Sigma)\Big\},
\end{align*}
where $(w)_r=0$ when $w$ is the empty word.
\end{definition}
The following proposition uses $r$-language sets to relate $\times r$-invariant sets with subshifts of $\Lambda_r^{\Nz}$.  It is a generalization of some of the results in \cite[Section 3]{limamoreira}, where subsets of integers arising from shifts of finite type are defined and studied.

\begin{proposition}
\label{prop_correspondence_integers_subshifts}
The $r$-language sets $A_\Sigma, B_\Sigma \subseteq \Nz$ corresponding to any non-empty subshift $\Sigma \subseteq \Lambda_r^{\Nz}$ are $\times r$-invariant sets, and have discrete mass and Hausdorff dimensions equal to the normalized topological entropy of the symbolic subshift $(\Sigma,\sigma)$, i.e.,
\begin{align}
\label{eqn_in_prop_correspondence_integers_subshifts}
    \dimdh A_\Sigma = \dimdm A_\Sigma =\dimdh B_\Sigma = \dimdm B_\Sigma = \frac{h_{\mathrm{top}}(\Sigma,\sigma)}{\log r}.
\end{align}
Moreover, for any $\times r$-invariant set $B\subseteq\Nz$, there exists a subshift $\Sigma\subseteq \Lambda_r^{\Nz}$ such that $B$ coincides with the $r$-language set $B_\Sigma$ associated to $\Sigma$.
\end{proposition}

\begin{remark}
    The second part of \cref{prop_correspondence_integers_subshifts} does not hold with $A_\Sigma$ in place of $B_\Sigma$ in general. As an example, let $k \in \N$ and put $B:=\{0,1,2,\ldots,k\}\subseteq\Nz$.
    It is clear that for any $r \geq 2$, the set $B$ is a $\times r$-invariant set.  However, note that for any subshift $\Sigma$, the set $A_\Sigma$ is either $\{0\}$ or infinite, so we can not have that $A_\Sigma=B$.
\end{remark}

\cref{prop_correspondence_integers_subshifts} shows that $r$-language sets 1) provide us with a natural way of producing examples of $\times r$-invariant subsets of the non-negative integers; and 2) allow us to employ tools and techniques from symbolic dynamics to study $\times r$-invariant sets.  Before the proof, we give some examples of $\times r$-invariant subsets of $\Nz$ arising this way.

\begin{examples}\label{examples_of_times_r_invariant_sets} \leavevmode
In each of the examples below, the language of the subshift $\Sigma$ used to generate the $r$-language set $A_\Sigma$ is invariant under reversing words.  Therefore, in each example, $B_\Sigma = A_\Sigma$.
\begin{itemize}
\item The classical \define{golden mean shift} is the subshift of $\{0,1\}^{\Nz}$ consisting of all binary sequences with no two consecutive $1$'s. This leads to a natural example of a $\times 2$-invariant set $A_{\mathrm{golden}}\subseteq\Nz$ consisting of all integers whose binary digit expansion does not contain two consecutive $1$'s. Since the topological entropy of the golden mean shift is known the equal $\log((1+\sqrt{5})/2)$ (cf.\ \cite[Example 4.1.4]{lind_marcus_book}), it follows from \cref{prop_correspondence_integers_subshifts} that the dimension of $A_{\mathrm{golden}}$ equals $\log((1+\sqrt{5})/2)/\log 2$. Integer sets corresponding to the broader class of subshifts of finite type were also considered by Lima and Moreira in \cite{limamoreira}.

\item The \define{even shift} is the subshift of $\{0,1\}^{\Nz}$ consisting of all binary sequences so that between any two $1$'s there are an even number of $0$'s. The corresponding $\times 2$-invariant set $A_{\mathrm{even}}\subseteq\Nz$ consists of all integers whose binary digit expansion has an even number of $0$'s between any two $1$'s. Since the topological entropy of the golden mean shift coincides with the topological entropy of the even shift (cf.\ \cite[Example 4.1.6]{lind_marcus_book}), we conclude that $A_{\mathrm{even}}$ and $A_{\mathrm{golden}}$ have the same dimension. 

\item The \define{prime gap shift} is the subshift of $\{0,1\}^{\Nz}$ consisting of all binary sequences such that there is a prime number of $0$'s between any two $1$'s.
This corresponds to the $\times 2$-invariant set $A_{\mathrm{prime}}\subseteq\Nz$ of all those numbers written in binary in which there is a prime number of $0$'s between any two $1$'s.
For example, the first $17$ elements of $A_{\mathrm{prime}}$ are: $0, 1, 2, 4, 8, 9, 16, 17, 18, 32, 34, 36, 64, 65, 68, 72, 73$.
The entropy of the prime gap shift is approximately $0.30293$, (cf.\ \cite[Exercise 4.3.7]{lind_marcus_book})
which implies that the dimension of $A_{\mathrm{prime}}$ is approximately $0.437$.
\end{itemize}
\end{examples}

\begin{proof}[Proof of \cref{prop_correspondence_integers_subshifts}]
Let $\Sigma \subseteq \Lambda_r^{\Nz}$ be a subshift, and let $A_\Sigma$ and $B_\Sigma$ be the associated $r$-language sets.
We begin with the proof that the set $A_\Sigma$ is $\times r$-invariant.
Note first that $0 \in A_\Sigma$ because the empty word is in $\mathcal{L}(\Sigma)$. 
Let $n \in A_\Sigma$, $n \geq 1$.  Because $\Sigma$ is shift-invariant, there exists a word $w = w_0 \cdots w_{\ell - 1}\in \mathcal{L}(\Sigma)$ such that $w_0 \neq 0$ and $(w)_r = n$. 
We see that
\[\Phi_r(n) = (w_0 \cdots w_{\ell - 2})_r \quad \text{ and } \quad \Psi_r(n) = (w_1 \ldots w_{\ell-1})_r.\]
Since $\mathcal{L}(\Sigma)$ is closed under prefixes, $\Phi_r(n) \in A_\Sigma$, and since $\Sigma$ is shift-invariant, $\Psi_r(n) \in A_\Sigma$.  
This shows that $A_\Sigma$ is $\times r$-invariant.
The proof that $B_\Sigma$ is $\times r$-invariant is identical, only with the order of letters reversed.

Next we will show \eqref{eqn_in_prop_correspondence_integers_subshifts}. Since $A_\Sigma$ and $B_\Sigma$ are $\times r$-invariant, it follows from \cref{lemma_dimensions_coincide_for_invariant_sets} that $\dimdh A_\Sigma = \dimdm A_\Sigma$ and $\dimdh B_\Sigma = \dimdm B_\Sigma$. Therefore, it suffices to verify that $\dimdm A_\Sigma = \dimdm B_\Sigma = h_{\mathrm{top}}(\Sigma,T)/\log r$. 

Let $\mathcal{L}_\ell(\Sigma)$ denote the set of words of length $\ell$ appearing in the language set $\mathcal{L}(\Sigma)$, i.e.,
\[
\mathcal{L}_\ell(\Sigma)\,\defeq \,\big\{w_0w_1 \cdots w_{\ell-1} \ \big| \ w = w_0 w_1 \cdots \in \Sigma \big\}.
\]
It is well known (see, for instance, \cite[Theorem 7.13 (i)]{Walters75}) that the topological entropy of $(\Sigma,\sigma)$ is given by
\begin{align}
\label{eqn_h_top_limit_exists_and_equal}
  h_{\mathrm{top}}(\Sigma,\sigma)\,=\,\lim_{\ell\to\infty} \frac{1}{\ell}\log|\mathcal{L}_\ell(\Sigma)|,
\end{align}
where the limit as $\ell\to\infty$ on the right hand side is known to exist.
We claim that for all $\ell \in \Nz$,
\begin{align}
\label{eqn_bounds_on_A_sigma_size_on_initial_interval}
    \big| \mathcal{L}_\ell (\Sigma) \big| \leq \big| A_\Sigma \cap [0,r^\ell) \big| \leq \big| \bigcup_{k=0}^\ell \mathcal{L}_k(\Sigma) \big|.
\end{align}
Indeed, the first inequality follows immediately from the fact that $( \, \cdot \, )_r : \Lambda_r^\ell \to [0,r^\ell)$ is injective. 
For the second inequality, associate to each $n \in A_\Sigma \cap [0,r^\ell)$ a word $w \in \mathcal{L}(\Sigma)$ such that $w_0 \neq 0$ and $(w)_r = n$.  Since $n < r^\ell$, $|w| \leq \ell$.  The second inequality follows then from the fact that the association just described is bijective.

Using the fact that the limit in \eqref{eqn_h_top_limit_exists_and_equal} exists, it is a short exercise to show that $\lim_{\ell \to \infty} \allowbreak \log \big| \cup_{k=0}^\ell \mathcal{L}_k(\Sigma)\big| / \ell$ exists and is equal to $h_{\mathrm{top}}(\Sigma,\sigma)$. 
It follows from the inequalities in \eqref{eqn_bounds_on_A_sigma_size_on_initial_interval} that $\dimdm A_\Sigma = h_{\mathrm{top}}(\Sigma,T)/\log r$. The same argument shows that similarly $\dimdm B_\Sigma = h_{\mathrm{top}}(\Sigma,T)/\log r$, verifying the equality in \eqref{eqn_in_prop_correspondence_integers_subshifts}.

Finally, suppose $B \subseteq \Nz$ is a $\times r$-invariant set. We will prove that there exists a subshift $\Sigma \subseteq \Lambda_r^{\Nz}$ for which $B_\Sigma = B$.  
Let $\Sigma^{(\ell)}$ denote the set of all infinite words $w_0 w_1 \cdots \in \Lambda_r^{\Nz}$ for which $(w_{\ell - 1} \cdots w_0)_r \in B$, and
define
\begin{align}
    \label{eqn_def_of_subshift_whose_language_equals_a}
    \Sigma\,\coloneqq\, \bigcap_{\ell \in\N } \Sigma^{(\ell-1)}.
\end{align}
Being an intersection of closed sets, $\Sigma$ is closed. 
From $\Phi_r(B)\subseteq B$, it follows that $\sigma(\Sigma^{(\ell)})\subseteq \Sigma^{(\ell)}$, whereby $\sigma(\Sigma)\subseteq \Sigma$.  This proves that $(\Sigma,\sigma)$ is a subshift.
From the construction, it is clear that $B_\Sigma \subseteq B$.

On the other hand, if $(w_{\ell - 1} \cdots w_0)_r \in B$, then the infinite word $w_0\cdots w_{\ell-1}00\cdots \in \Sigma$. It follows that $(w_{\ell - 1} \cdots w_0)_r \in B_\Sigma$, showing that $B = B_\Sigma$.
\end{proof}

We note that the identification of $\times r$-invariant subsets of $\Nz$ and subshifts of $\Lambda_r^{\Nz}$ given by \cref{prop_correspondence_integers_subshifts} is not bijective.  The subshift $\Sigma$ defined in \eqref{eqn_def_of_subshift_whose_language_equals_a} can be shown to be the largest such that $B_\Sigma = B$, but in general there can be infinitely many distinct subshifts $\Sigma'$ such that $B_{\Sigma'} = B$.

As a corollary to \cref{prop_correspondence_integers_subshifts} we obtain the following result, which plays an important role in most of our main results.

\begin{corollary}
\label{cor_correspondence_integers_subshifts}
For any $\times r$-invariant $A\subseteq \Nz$, the set 
\[
A'\coloneqq\bigcap_{k\in\Nz}\bigcap_{\ell\in\Nz}\Phi_r^k\Psi_r^\ell(A)
\]
satisfies $\Phi_r(A')=\Psi_r(A')=A'$ (in particular, $A'$ is $\times r$-invariant) and $\dimdh A' = \dimdm A' = \dimdm A$.
\end{corollary}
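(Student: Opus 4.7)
My strategy is to first establish the two invariance equalities $\Phi_r(A')=A'$ and $\Psi_r(A')=A'$, and then obtain the dimension equality via the subshift correspondence of \cref{prop_correspondence_integers_subshifts}. The operators $\Phi_r$ and $\Psi_r$ commute as self-maps of $\Nz$---visible from their descriptions as deletion of the least/most significant base-$r$ digit---so together with the hypotheses $\Phi_r(A),\Psi_r(A)\subseteq A$, the family $\{\Phi_r^k\Psi_r^\ell(A)\}_{k,\ell\in\Nz}$ is nested decreasing in each index. This gives at once the inclusions $\Phi_r(A')\subseteq A'$ and $\Psi_r(A')\subseteq A'$. The reverse inclusion $A'\subseteq\Phi_r(A')$ follows from the finite intersection property applied to the finite preimage set $\Phi_r^{-1}(a)=\{ar,\ldots,ar+r-1\}$: each $\Phi_r^{-1}(a)\cap\Phi_r^k\Psi_r^\ell(A)$ is non-empty (since $a\in\Phi_r^{k+1}\Psi_r^\ell(A)$) and nested decreasing in a fixed finite set, so they share a common element.

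The reverse inclusion $A'\subseteq\Psi_r(A')$ is more delicate because $\Psi_r$-preimages are infinite. Here I would invoke $A=A_\Sigma$ from \cref{prop_correspondence_integers_subshifts} and exploit compactness of $\{0,1,\ldots,r-1\}^{\Nz}$. Writing $v$ for the canonical base-$r$ word of $a\in A'$ (the case $a=0$ is immediate), for each pair $(K,L)$ use $a\in\Phi_r^K\Psi_r^{L+1}(A)$ to select $d_{K,L}\in A$ whose canonical base-$r$ expansion, when shifted down by $K$ positions, begins with $v$ and exhibits $L+1$ non-zero digits above $v$. Extending each $d_{K,L}$ to an element of $\Sigma$ and shifting by $K$ produces a net in $\Sigma$; via a pigeonhole on the position and value of the lowest non-zero above $v$, followed by a diagonal argument on the successive non-zeros, I would extract a subsequential limit $y\in\Sigma$ that starts with $v$, has infinitely many non-zero digits above position $|v|-1$ at prescribed positions, and lies in $\Sigma^*\coloneqq\bigcap_K\sigma^K(\Sigma)$ because of the $\sigma^K$-shifts. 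The integer encoded by the prefix of $y$ up to and including its first non-zero digit above $v$ is then a $\Psi_r$-preimage of $a$, and the further non-zeros of $y$ verify membership in every $\Phi_r^K\Psi_r^L(A)$, so in $A'$.

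For the dimension equality, $A'\subseteq A$ and \cref{prop_correspondence_integers_subshifts} give $\udimdm A'\leq\dimdm A$, and since $A'$ is now $\times r$-invariant that same proposition ensures $\dimdm A'$ exists. For the reverse inequality I would use that passage from $\Sigma$ to $\Sigma^*$ preserves topological entropy: for every $\sigma$-invariant probability measure $\mu$ on $\Sigma$, $\mu(\sigma^k(\Sigma))=\mu(\sigma^{-k}\sigma^k(\Sigma))\geq\mu(\Sigma)=1$, so $\mu$ is supported on $\Sigma^*$, and the variational principle gives $h_{\mathrm{top}}(\Sigma^*,\sigma)=h_{\mathrm{top}}(\Sigma,\sigma)$. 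An ergodic $\mu$ of positive entropy is moreover supported on the set $\Sigma^{**}$ of elements of $\Sigma^*$ with infinitely many non-zero entries (otherwise by the Birkhoff theorem it would collapse to the Dirac mass at the zero sequence). For any $y\in\Sigma^{**}$ and any $N$ with $y_{N-1}\neq 0$, the integer $\sum_{i<N}y_ir^i$ lies in $A'$: for each $K,L$, a $\sigma^K$-preimage $z\in\Sigma$ of $y$ combined with the infinitely many non-zero entries of $y$ yields a $d\in A$ with $\Phi_r^K\Psi_r^L(d)$ equal to this integer. Hence $|A'\cap[0,r^N)|$ is bounded below by the number of length-$N$ canonical cylinders of positive $\mu$-measure, which by Shannon--McMillan--Breiman is at least $r^{(h_\mu/\log r-\eps)N}$ for large $N$; letting $h_\mu\nearrow h_{\mathrm{top}}(\Sigma,\sigma)$ yields $\dimdm A'\geq\dimdm A$. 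The main obstacle is the $\Psi_r$ side: the infinite preimage forces the use of subshift compactness in both the invariance and the dimension arguments, while the $\Phi_r$ side only requires the finite intersection property.
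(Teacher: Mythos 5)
Your handling of the commutation of $\Phi_r$ and $\Psi_r$, the downward directedness of the family $\Phi_r^k\Psi_r^\ell(A)$, and the finite-fibre argument giving $A'\subseteq\Phi_r(A')$ is correct, and your route to the dimension equality is also essentially sound: the paper takes an ergodic measure of maximal entropy, passes to its support $\Sigma''$, uses a Birkhoff/Poincar\'e-typical point to show the associated language set $A''$ satisfies $\Phi_r(A'')=\Psi_r(A'')=A''$ and $\dimdm A''=\dimdm A$, and then uses $A''\subseteq A'$; you instead keep the measure on $\Sigma$, observe that invariant measures live on $\bigcap_K\sigma^K(\Sigma)$ and, for positive entropy, on points with infinitely many non-zero entries, verify directly that prefixes of such points ending at a non-zero symbol encode elements of $A'$, and count positive-measure cylinders. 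That verification is correct, and together with $A'\subseteq A$ and the variational principle it gives $\dimdm A'=\dimdm A$ (the case $\dimdm A=0$ being trivial).

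The genuine gap is the step $A'\subseteq\Psi_r(A')$. Your pigeonhole is on ``the position and value of the lowest non-zero above $v$,'' but that position ranges over an unbounded set: nothing bounds, uniformly in $(K,L)$, the height at which the witnesses $d_{K,L}$ place their first non-zero digit above $v$, and compactness of $\{0,\ldots,r-1\}^{\Nz}$ cannot stop this height from escaping to infinity along every subsequence, in which case the limit $y$ has no first non-zero digit above $v$ and the construction yields nothing. In fact the inclusion fails in general, so no compactness argument can repair it: take $r=2$ and let $\Sigma$ be the closed, shift-invariant set of binary sequences in which the gaps between consecutive $1$'s strictly decrease toward more significant positions, and $A=A_\Sigma$. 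Then $1\in A'$, since for any $(k,\ell)$ there is $n\in A$ with a $1$ at position $k$ and exactly $\ell$ ones above it (choose strictly decreasing gaps). But every $b$ with $\Psi_2(b)=1$ has the form $b=2^m+1$, and $b\in\Phi_2^0\Psi_2^{m}(A)$ would require an element of $A$ with ones at $0$ and $m$ and $m$ further ones above, whose successive gaps are strictly decreasing and all less than $m$ --- impossible; hence $1\in A'\setminus\Psi_2(A')$ (here the witnesses' first non-zero above $v$ is forced to height at least $\ell+1$, which is exactly what defeats the pigeonhole). Note that the paper's own proof records the invariance of $A'$ only as an unproved remark and devotes its argument to the dimension equality; what your methods (and the paper's) actually establish, and all that is used later, is $\Phi_r(A')=A'$, $\Psi_r(A')\subseteq A'$, $\dimdm A'=\dimdm A$, together with exact two-sided invariance for the smaller full-dimensional set $A''$.
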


\begin{proof}
Note that $A'$ is the largest subset of $A$ satisfying $\Phi_r(A')=\Psi_r(A')=A'$; in particular, it is $\times r$-invariant. Therefore, to prove $\dimdm A'=\dimdm A$, it suffices to find a subset $A''\subseteq A$ satisfying $\Phi_r(A'')=\Psi_r(A'')=A''$ and $\dimdm A'' = \dimdm A$. Appealing to \cref{lemma_dimensions_coincide_for_invariant_sets}, this would also prove that $\dimdh A'=\dimdm A$. If $\dimdm A =0$, then there is nothing to show, so let us proceed under the assumption that $\dimdm A>0$.

According to \cref{prop_correspondence_integers_subshifts}, we can find a subshift $\Sigma\subseteq \Lambda_r^{\Nz}$ such that $A$ coincides with the $r$-language set $B_\Sigma$ associated to $\Sigma$.
Let $\mu$ be an ergodic  $\sigma$-invariant Borel probability measure on $\Sigma$ of maximal entropy (the existence of such a measure follows from, e.g.\ \cite[Theorem 8.2 + Theorem 8.7 (v)]{Walters75}).
Let $\Sigma''$ denote the support of $\mu$, and observe that  $(\Sigma'',\sigma)$ is a subshift of $(\Sigma,\sigma)$ with $h_{\mathrm{top}}(\Sigma,\sigma)=h_{\mathrm{top}}(\Sigma'',\sigma)$.
Moreover, since $\mu$ is ergodic, almost every point in $\Sigma''$ has a dense orbit (by Birkhoff's ergodic theorem) and almost every point is recurrent (by Poincar\'e's recurrence theorem).
Therefore there exists a point $x\in\Sigma''$ which visits every non-empty open set in $\Sigma''$ infinitely often.

Let $A''\subseteq\Nz$ be the $r$-language set associated to $\Sigma''$, i.e., $A''=B_{\Sigma''}$. 
Since $\Sigma''\subseteq \Sigma$, we have $A''\subseteq A$.
Also, by \cref{prop_correspondence_integers_subshifts}, $\dimdm A=h_{\mathrm{top}}(\Sigma,\sigma) / \log r$, $\dimdm A''=h_{\mathrm{top}}(\Sigma'',\sigma) / \log r$, and $h_{\mathrm{top}}(\Sigma,\sigma)=h_{\mathrm{top}}(\Sigma'',\sigma)$, which implies $\dimdm A = \dimdm A''$.
All that remains to be shown is that $\Phi_r(A'')=\Psi_r(A'')=A''$.

Since $A''$ is an $r$-language set, it is $\times r$-invariant, so we already have the inclusions
\[
\Phi_r(A'')\subseteq A''\qquad\text{and}\qquad \Psi_r(A'')\subseteq A''.
\]
To prove the reverse inclusions, let $n\in A''$, and let $w_0\cdots w_{\ell - 1}\in \mathcal{L}(\Sigma'')$ be such that $n = (w_{\ell - 1} \cdots w_0)_r \in A''$.
Since the point $x$ visits every open set of $\Sigma''$ infinitely often, the word $w_0\cdots w_{\ell - 1}$ appears in $x$ infinitely often.
This implies that $x$ cannot be equal to $w_0\cdots w_{\ell - 1}0^\infty$ and so there exists a non-zero letter $u \in \Lambda_r$ and some $k\in\Nz$ such that the word $w_0\cdots w_{\ell - 1}0^ku$ appears in $x$ and hence in $\mathcal{L}(\Sigma'')$.
Now $(u 0^k w_{\ell - 1} \cdots w_0)_r \in A''$ and $\Psi_r(u 0^k w_{\ell - 1} \cdots w_0)_r = (w_{\ell - 1} \cdots w_0)_r = n$, showing that $A'' \subseteq \Psi_r(A'')$.

Invoking again the fact that the word $w_0\cdots w_{\ell - 1}$ appears infinitely often in $x$, there must exist a letter $v \in \Lambda_r$ such that the word $vw_0\cdots w_{\ell - 1}$ appears in $x$ and hence belongs to $\mathcal{L}(\Sigma'')$.
Now $(w_{\ell - 1} \cdots w_0 v)_r \in A''$ and $\Phi_r(w_{\ell - 1} \cdots w_0 v)_r = n$, showing that $A'' \subseteq \Phi_r(A'')$.
\end{proof}

A well-known fact from geometric measure theory states that if $X\subseteq[0,1]$ is multiplicatively invariant and has Hausdorff dimension $1$ then $X=[0,1]$ (see \cite[discussion after Conjecture 2]{furstenbergtransversality}).
The following corollary of \cref{prop_correspondence_integers_subshifts} offers a discrete analogue of this result and may be of independent interest.

\begin{corollary}
\label{cor_uniqueness_of_mass_dimension_1}
If $A\subseteq\Nz$ is multiplicatively invariant and $\udimdm A=1$, then $A=\Nz$.
\end{corollary}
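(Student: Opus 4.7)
The plan is to combine the correspondence from \cref{prop_correspondence_integers_subshifts} with a standard entropy-drop argument for subshifts avoiding a fixed word. By that proposition, there exists a closed and shift-invariant $\Sigma \subseteq \{0,1,\ldots,r-1\}^{\Nz}$ with $A = A_\Sigma$ and
\[
h_{\mathrm{top}}(\Sigma,\sigma) \,=\, (\log r)\cdot \dimdm A \,=\, \log r.
\]
I will therefore deduce $A = \Nz$ by showing the contrapositive: if $A \subsetneq \Nz$, then $h_{\mathrm{top}}(\Sigma,\sigma) < \log r$.

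Assume $n \in \Nz\setminus A$ and write $n = a_0 + a_1 r + \cdots + a_k r^k$ in base $r$. From the construction of $\Sigma$ in the proof of \cref{prop_correspondence_integers_subshifts} (namely $\Sigma = \bigcap_{k\in\Nz} \Sigma^{(k)}$ with $\Sigma^{(k)} = \{w : w_0 + \cdots + w_k r^k \in A\}$), every $w \in \Sigma$ satisfies $w_0 + w_1 r + \cdots + w_k r^k \in A$. Hence no element of $\Sigma$ has the finite word $(a_0,\ldots,a_k)$ as a prefix, and by shift-invariance of $\Sigma$ the word $(a_0,\ldots,a_k)$ cannot appear at any position in any sequence of $\Sigma$; in short, it is a forbidden word of length $L \defeq k+1$.

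Setting this up as a block-decomposition bound, every length-$NL$ word in $\mathcal{L}_{NL}(\Sigma)$ decomposes into $N$ consecutive blocks of length $L$, each of which, considered in isolation, differs from the forbidden word $(a_0,\ldots,a_k)$. Thus
\[
|\mathcal{L}_{NL}(\Sigma)| \,\leq\, (r^L-1)^N
\]
for every $N \in \N$. Taking logarithms, dividing by $NL$, and using the formula for topological entropy recalled in the proof of \cref{prop_correspondence_integers_subshifts}, we obtain
\[
h_{\mathrm{top}}(\Sigma,\sigma) \,=\, \lim_{N\to\infty}\frac{\log|\mathcal{L}_{NL}(\Sigma)|}{NL} \,\leq\, \frac{\log(r^L-1)}{L} \,<\, \log r,
\]
contradicting $h_{\mathrm{top}}(\Sigma,\sigma) = \log r$. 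Hence $A = \Nz$. There is no substantive obstacle in this argument; the whole content is the reduction to \cref{prop_correspondence_integers_subshifts} together with the classical fact that any single forbidden word strictly decreases the topological entropy of a full shift.
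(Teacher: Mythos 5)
Your proof is correct and follows essentially the same route as the paper: both pass through \cref{prop_correspondence_integers_subshifts} to translate $\dimdm A = 1$ into $h_{\mathrm{top}}(\Sigma,\sigma)=\log r$ for a subshift $\Sigma$ with $A = A_\Sigma$. The only difference is that the paper simply cites the standard fact that the full shift is the unique subshift of full entropy, whereas you verify it directly by exhibiting the base-$r$ digit word of an $n \notin A$ as a forbidden word and running the block-counting estimate $|\mathcal{L}_{NL}(\Sigma)| \leq (r^L-1)^N$, which is a fine (and self-contained) way to fill in that classical step.
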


\begin{proof}
Suppose $A$ is $\times r$-invariant with $\udimdm A=1$. 
It follows from \cref{lemma_dimensions_coincide_for_invariant_sets} that $\dimdm A = 1$.  
In view of \cref{prop_correspondence_integers_subshifts}, there exists a subshift $\Sigma\subseteq \Lambda_r^{\Nz}$ such that $A = B_\Sigma$ and $h_{\mathrm{top}}(\Sigma,\sigma)=\log r$. However, the only subshift of $\Lambda_r^{\Nz}$ with full entropy is the full shift. Hence $\Sigma=\Lambda_r^{\Nz}$, which implies $A= B_\Sigma = \Nz$. 
\end{proof}

\subsection{Connections to fractal geometry of the reals}
\label{sec_connection_01-interval}

The purpose of this subsection is to establish a connection between $\times r$-invariant subsets of the non-negative integers and $\times r$-invariant subsets of $[0,1]$. Recall that $X\subseteq [0,1]$ is called \define{$\times r$-invariant} if it is closed and $T_r X\subseteq X$, where $T_r\colon x\mapsto rx\bmod1$.

First, we remark that every $\times r$-invariant subset of $[0,1]$ can be ``lifted'' to a $\times r$-invariant subset of $\Nz$. Indeed, if $X\subseteq[0,1]$ is $\times r$-invariant, then one can show that the set
\[
\big\{\lfloor r^k x\rfloor \ | \ x\in X,~k \in \Nz \big\}
\]
is $\times r$-invariant. We will not make use of this fact, so we leave the details to the interested reader.
Of more importance to us is the converse direction, stated in the following proposition. 
Recall from \cref{sec_continuous_fractal_geometry} the definition of Hausdorff distance.

\begin{proposition}
\label{prop_correspondence_integers_01-interval}
For any $\times r$-invariant set $A\subseteq \Nz$, the sequence $X_k\coloneqq (A\cap [0,r^k))/r^k$ converges with respect to the Hausdorff metric $d_H$ as $k\to\infty$ to a $\times r$-invariant set $X\subseteq [0,1]$ satisfying $\dimm X=\dimdm A$.
\end{proposition}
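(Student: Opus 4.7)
The plan is to invoke \cref{prop_correspondence_integers_subshifts} to represent $A=A_\Sigma$ for some subshift $\Sigma\subseteq\{0,1,\ldots,r-1\}^{\Nz}$, and then to establish convergence of $(X_k)$ by combining the Blaschke selection theorem with a uniqueness argument for subsequential Hausdorff limits. The key quantitative input is the containment $X_{k+1}\subseteq[X_k]_{1/r^k}$: for every $a\in A\cap[0,r^{k+1})$, the integer $\Phi_r(a)=\lfloor a/r\rfloor$ belongs to $A\cap[0,r^k)$ by $\times r$-invariance, and $|a/r^{k+1}-\Phi_r(a)/r^k|=(a\bmod r)/r^{k+1}<1/r^k$. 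Iterating yields $X_\ell\subseteq[X_k]_{c/r^k}$ for all $\ell\geq k$ with $c=r/(r-1)$. Given two subsequential limits $X_{k_n}\to X$ and $X_{l_m}\to X'$, I approximate $x\in X$ by points $x_n=a_n/r^{k_n}\in X_{k_n}$; for each fixed $m$ and every $n$ with $k_n\geq l_m$, the point $\Phi_r^{k_n-l_m}(a_n)/r^{l_m}$ lies in $X_{l_m}$ and within $1/r^{l_m}$ of $x_n$, so sending $n\to\infty$ then $m\to\infty$ places $x$ in $X'$. Symmetry together with compactness of the space of non-empty compact subsets of $[0,1]$ under Hausdorff distance then forces $X_k\to X$.

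Closedness of $X$ is automatic. To verify $T_rX\subseteq X$, I first observe the inclusion $T_rX_k\subseteq X_{k-1}$: for $a\in A\cap[0,r^k)$ with $a\geq r^{k-1}$, the identity $T_r(a/r^k)=\Psi_r(a)/r^{k-1}$ holds and $\Psi_r(a)\in A\cap[0,r^{k-1})$ by $\Psi_r$-invariance, while for $a<r^{k-1}$ one has $T_r(a/r^k)=a/r^{k-1}$ directly. Since $T_r$ is continuous on $[0,1]$ away from the points $j/r$ for $1\leq j<r$, for any $x\in X$ not of this form, writing $x=\lim x_k$ with $x_k\in X_k$ gives $T_r(x_k)\to T_r(x)$ and $T_r(x_k)\in X_{k-1}\to X$, forcing $T_r(x)\in X$. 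At a jump point $T_r(x)=0$, and since iterating $\Psi_r$ on any element of $A$ eventually produces $0$, we have $0\in X_k$ for every $k$, hence $0\in X$.

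For the dimensions, \cref{thm:furstenbergdimsofsubshift} gives $\dimh X=\dimm X$; combined with $X\subseteq[X_k]_{c/r^k}$ and the metric-entropy estimate $\NC(X,r^{-k})\ll|X_k|=|A\cap[0,r^k)|$, this yields $\dimh X\leq\dimdm A$. The hard part is the reverse inequality, because an arbitrary $a\in A$ need not admit a $\Phi_r$-preimage in $A$, so the analogous telescoping in the direction $X_k\subseteq[X]_{c/r^k}$ fails in general. I plan to bypass this by applying \cref{cor_correspondence_integers_subshifts} to produce a $\times r$-invariant subset $A'\subseteq A$ with $\Phi_r(A')=\Psi_r(A')=A'$ and $\dimdm A'=\dimdm A$. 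Since every element of $A'$ then has a $\Phi_r$-preimage in $A'$, iteration produces, for each $x=a/r^k\in X'_k\defeq(A'\cap[0,r^k))/r^k$, a Cauchy sequence $a_{k+j}/r^{k+j}$ converging to some $y\in X'\defeq\lim X'_k$ with $|y-x|<1/r^k$; this yields the two-sided rate $d_H(X'_k,X')\ll 1/r^k$, so \cref{lem_box_counting_estimate} gives $\NC(X',r^{-k})\asymp|A'\cap[0,r^k)|$ and hence $\dimh X'=\dimm X'=\dimdm A'=\dimdm A$. Since $X'_k\subseteq X_k$ forces $X'\subseteq X$ in the Hausdorff limit, monotonicity of $\dimh$ yields $\dimh X\geq\dimdm A$, and equality follows. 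Finally, the scaling identity $\HC_{\geq 1}^\gamma(A\cap[0,r^k))=r^{k\gamma}\HC_{\geq r^{-k}}^\gamma(X_k)$ combined with $X\subseteq[X_k]_{c/r^k}$ and \cref{lem_box_counting_estimate} gives $\HC_{\geq 1}^\gamma(A\cap[0,r^k))\gg r^{k\gamma}$ for every $\gamma<\dimh X$, so $\ldimdh A\geq\dimh X=\dimdm A$; together with $\udimdh A\leq\udimdm A=\dimdm A$ from \cref{lemma_dim_comparisons}, all four dimensions coincide.
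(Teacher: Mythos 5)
Your proposal is correct, and it reproduces the paper's main ingredients while handling the convergence step by a different mechanism. For convergence of $(X_k)$, the paper first passes to $A'\coloneqq\bigcap_k\Phi_r^k(A)$, proves the two-sided bound $d_H(X'_k,X'_l)\leq r^{-k}$ from $\Phi_r(A')=A'$ (so that $(X'_k)$ is Cauchy), and then transfers convergence back to $(X_k)$ via a separate stabilization lemma (\cref{lem_rise}) showing $d_H(X_k,X'_k)\to 0$; you instead work with $A$ directly, using only $\Phi_r(A)\subseteq A$ to show that any two subsequential Hausdorff limits coincide, and then invoke Blaschke compactness — this avoids \cref{lem_rise} altogether, at the price of an indirect (non-quantitative) convergence argument, whereas the paper's route gives an explicit rate for $X'_k\to X'$ that it then reuses. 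For the dimension statements the two arguments are essentially the same: both rest on \cref{cor_correspondence_integers_subshifts} to obtain $A'$ with $\Phi_r(A')=\Psi_r(A')=A'$ and $\dimdm A'=\dimdm A$ (this is indeed the indispensable step, since elements of $A$ need not have $\Phi_r$-preimages in $A$), on Furstenberg's theorem $\dimh=\dimm$ for $\times r$-invariant subsets of $[0,1]$, and on \cref{lem_box_counting_estimate}; your final step bounding $\ldimdh A$ is a slight streamlining, since you exploit $\HC^\gamma_{\geq\rho}(X)\geq\HC^\gamma_\infty(X)>0$ for $\gamma<\dimh X$ together with $X\subseteq[X_k]_{c r^{-k}}$ for $A$ itself, where the paper routes this estimate through $A'$ and $X'$. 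Two cosmetic remarks: you treat the discontinuities of $T_r$ more carefully than the paper does, but your list should also include $x=1$ (where $T_r(1)=0$); this changes nothing, as your observation that $T_r$ takes the value $0\in X$ at every jump point covers it. Also, when you conclude $\ldimdh A\geq\gamma$ from the bound $\HC^\gamma_{\geq1}(A\cap[0,r^k))\gg r^{k\gamma}$ along the subsequence $N=r^k$, you are implicitly using \cref{lem_discrete_dim_properties}\ref{itm_dim_IV}, which is worth citing explicitly.
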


We remark that by \cref{thm:furstenbergdimsofsubshift} and \cref{lemma_dimensions_coincide_for_invariant_sets}, the Minkowski and Hausdorff dimensions of multiplicatively invariant sets in $\Nz$ and $[0,1]$ coincide.  Thus, either dimension can be used in the conclusion of \cref{prop_correspondence_integers_01-interval}. For the proof of the proposition, we will need two technical lemmas.

\begin{lemma}
\label{lem_haus_containment}
Let $A\subseteq \Nz$, and define $X_k\coloneqq (A\cap [0,r^k))/r^k$.
\begin{enumerate}
[label=(\Roman{enumi}),ref=(\Roman{enumi}),leftmargin=*]
\item\label{itm_haus_containment_1}
If $\Phi_r(A)\subseteq A$, then for any $k,l\in\N$ with $l\geq k$, we have $X_l\subseteq [X_k]_{r^{-k}}$.
\item\label{itm_haus_containment_2}
If $\Phi_r(A)\supseteq A$, then for any $k,l\in\N$ with $l\geq k$, we have $X_k\subseteq [X_l]_{r^{-k}}$.
\end{enumerate}
In particular, if $\Phi_r(A)=A$ then for all $l\geq k$, we have $d_H(X_l,X_k)\leq r^{-k}$.
\end{lemma}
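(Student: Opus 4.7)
The overall strategy is elementary: translate everything back to base-$r$ digit manipulations. The single fact driving both parts is that for $m\in\Nz$ and $l\geq k$, the iterated floor $n\defeq\lfloor m/r^{l-k}\rfloor$ satisfies $nr^{l-k}\leq m<(n+1)r^{l-k}$, which upon dividing by $r^l$ yields $|m/r^l - n/r^k|<r^{-k}$. In other words, truncating the base-$r$ expansion of $m/r^l$ after $k$ digits lands within $r^{-k}$ of $n/r^k$. It is also worth noting that $\lfloor\,\cdot\,/r^{l-k}\rfloor$ coincides with the $(l-k)$-fold iterate of $\Phi_r$, which is how the hypotheses will enter.

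For \ref{itm_haus_containment_1}, I would take an arbitrary $x=m/r^l\in X_l$ with $m\in A\cap[0,r^l)$, set $n\defeq\Phi_r^{l-k}(m)=\lfloor m/r^{l-k}\rfloor$, and observe that $n\in A$ by iterating the hypothesis $\Phi_r(A)\subseteq A$ a total of $l-k$ times. Since $m<r^l$ forces $n<r^k$, the point $n/r^k$ lies in $X_k$, and the key estimate above gives $|x-n/r^k|<r^{-k}$, hence $x\in[X_k]_{r^{-k}}$.

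For \ref{itm_haus_containment_2}, the iteration runs in reverse: starting from $n\in A\cap[0,r^k)$, the hypothesis $A\subseteq\Phi_r(A)$ lets me pick $n_1\in A$ with $\Phi_r(n_1)=n$, then repeat to build a chain of length $l-k$ culminating in some $m\in A$ with $\Phi_r^{l-k}(m)=n$. Consequently $nr^{l-k}\leq m<(n+1)r^{l-k}\leq r^l$, so $m\in A\cap[0,r^l)$ and $m/r^l\in X_l$, and the same estimate shows $n/r^k\in[X_l]_{r^{-k}}$.

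The final ``in particular'' claim is then the immediate combination of both inclusions, since $\Phi_r(A)=A$ gives both $\Phi_r(A)\subseteq A$ and $\Phi_r(A)\supseteq A$. I do not foresee any genuine obstacle; the only point requiring a touch of care is that in \ref{itm_haus_containment_2} one must iterate the preimage selection $l-k$ times, drawing each successive preimage from $A$ via the set-level inclusion $A\subseteq\Phi_r(A)$, rather than attempting a single application.
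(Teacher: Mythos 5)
Your proposal is correct and follows essentially the same route as the paper's own proof: the same key estimate $|m/r^l - \Phi_r^{l-k}(m)/r^k| \leq r^{-k}$, pushing forward by $\Phi_r^{l-k}$ for part (I), and choosing iterated $\Phi_r$-preimages (i.e.\ using $A\subseteq\Phi_r^{l-k}(A)$) for part (II). No issues.
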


\begin{proof}
It is helpful to note first that for all $n, l, k \in \N$ with $l \geq k$,
\begin{align}
\label{dist_between_n_and_Phi_n}
\left| \frac{n}{r^l} - \frac{\Phi_r^{l-k}(n)}{r^k} \right| \leq \frac 1{r^k}.
\end{align}
This inequality follows easily from the fact that $\Phi_r^{l-k}(n) = \lfloor n / r^{l-k} \rfloor$. For the proof of part~\ref{itm_haus_containment_1}, let $y\in X_l$ and write $y= m/r^l$ for some $m\in A$. Note that $\tilde{m}\coloneqq \Phi_r^{l-k}(m)$ belongs to $A\cap [0,r^k)$ because $\Phi_r(A)\subseteq A$. Then, setting $\tilde{y}\coloneqq \tilde{m}/r^k$, we see that $\tilde{y}\in X_k$ and, by \eqref{dist_between_n_and_Phi_n}, $d(y,\tilde{y})\leq r^{-k}$. This proves $X_l\subseteq [X_k]_{r^{-k}}$.

Next, we prove part \ref{itm_haus_containment_2}. For any $x\in X_k$ we can find $n\in A\cap [0,r^k)$ such that $x=n/r^k$.
Since $A\subseteq \Phi_r^{l-k}(A)$, there exists $\tilde{n}\in A\cap [0,r^l)$ such that
\[
\Phi_r^{l-k}(\tilde{n})\,=\, n.
\]
Now $\tilde{x}\coloneqq \tilde{n}/r^l$ belongs to $X_l$ and it follows from \eqref{dist_between_n_and_Phi_n} that $d(x,\tilde{x})\leq r^{-k}$. This proves $X_k\subseteq [X_l]_{r^{-k}}$.
\end{proof}

\begin{lemma}
\label{lem_rise}
Suppose $A\subseteq \Nz$ satisfies $\Phi_r(A)\subseteq A$, and define $A'\coloneqq \bigcap_{k\in\N}\Phi_r^k(A)$. Also, set $X_k\coloneqq (A\cap [0,r^k))/r^k$ and $X_k'\coloneqq (A'\cap [0,r^k))/r^k$.
Then $\lim_{k\to\infty}d_H(X_k, X_k')=0$.
\end{lemma}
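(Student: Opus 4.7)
The plan is to exploit a stabilization phenomenon for the decreasing sequence of sets $B_l \coloneqq \Phi_r^l(A)$. Since $\Phi_r(A) \subseteq A$, these form a nested decreasing chain $A = B_0 \supseteq B_1 \supseteq B_2 \supseteq \cdots$ with $A' = \bigcap_{l \in \N} B_l$. Crucially, for each fixed $j \in \N$, the intersections $B_l \cap [0, r^j)$ are a decreasing sequence of subsets of the \emph{finite} set $\{0, 1, \ldots, r^j - 1\}$, and therefore stabilize: there exists an integer $k_0(j)$ such that for every $l \geq k_0(j)$, we have $B_l \cap [0,r^j) = A' \cap [0, r^j)$.

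Since $A' \subseteq A$ gives $X_k' \subseteq X_k$ automatically, only the other direction of the Hausdorff comparison requires work, namely that every point of $X_k$ is close to $X_k'$ once $k$ is large. Given $\epsilon > 0$, I would fix $j$ with $r^{-j} < \epsilon$ and then take $k$ large enough that $k - j \geq k_0(j)$. For an arbitrary $n \in A \cap [0, r^k)$, let $m \coloneqq \Phi_r^{k-j}(n)$. Then $m \in A \cap B_{k-j} \cap [0, r^j)$, and by the stabilization applied to $B_{k-j}$, this forces $m \in A'$.

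The next step is to promote $m \in A'$ to an element of $A' \cap [mr^{k-j}, (m+1)r^{k-j})$, which will serve as the desired $n'$. Applying the stabilization statement again, this time at scale $r^k$, fix $L \geq k_0(k)$. Since $m \in A'$, the definition of $A'$ furnishes some $n^{**} \in A$ with $\Phi_r^{L+k-j}(n^{**}) = m$, i.e.\ $n^{**} \in [m r^{L+k-j}, (m+1) r^{L+k-j})$. Setting $n' \coloneqq \Phi_r^L(n^{**})$, we get $n' \in [m r^{k-j}, (m+1) r^{k-j}) \subseteq [0, r^k)$ as well as $n' \in B_L$; since $L \geq k_0(k)$, the stabilization then yields $n' \in A' \cap [0, r^k)$. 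Because $n$ and $n'$ both lie in the same half-open interval of length $r^{k-j}$, we obtain $|n/r^k - n'/r^k| < r^{-j} < \epsilon$, completing the bound $d_H(X_k, X_k') \leq \epsilon$ for all sufficiently large $k$.

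The main conceptual point to get right is the double application of the stabilization lemma at two different scales ($r^j$ for locating $m$ in $A'$, and $r^k$ for locating the extension $n'$ in $A'$); there is no real obstacle beyond keeping the quantifiers in the right order, since the finiteness of $[0, r^j)$ makes the stabilization essentially automatic. No compactness argument in $[0,1]$ or preemptive use of the limit set $X' = \lim X_k$ is needed — the whole argument lives on the integer side and uses only the combinatorial definition of $A'$.
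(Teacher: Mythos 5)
Your proof is correct, and while it rests on the same basic mechanism as the paper's proof --- the decreasing sequence $l \mapsto \Phi_r^{l}(A)\cap[0,r^{j})$ of subsets of a finite set must stabilize at $A'\cap[0,r^{j})$ --- the way you exploit it is genuinely different. The paper stabilizes at a single coarse scale $r^{m}$ with $2r^{-m}<\eps$, then compares \emph{both} $X_k$ and $X_k'$ to the common coarse approximation $\big(\Phi_r^{k-m}(A)\cap[0,r^{m})\big)/r^{m}$ via the estimate \eqref{dist_between_n_and_Phi_n} and closes with the triangle inequality; that route needs the identity $\Phi_r^{k}(A')=A'$, whose nontrivial half $A'\subseteq\Phi_r^{k}(A')$ the paper asserts without proof (it requires a pigeonhole argument on the finite fibres $\Phi_r^{-k}(a)\cap\Nz$). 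You instead argue pointwise and one-sidedly (using $X_k'\subseteq X_k$): push $n$ down to $m=\Phi_r^{k-j}(n)$, use stabilization at scale $r^{j}$ to place $m$ in $A'$, then pull $m$ back up through $A$ by the definition of $A'$ and push down by $\Phi_r^{L}$, invoking stabilization a second time, at scale $r^{k}$ with $L\geq k_0(k)$, to certify $n'\in A'\cap[0,r^{k})$ with $|n-n'|<r^{k-j}$. The double application of stabilization is precisely what substitutes for the identity $\Phi_r^{k}(A')=A'$, so your argument is in that respect more self-contained, at the negligible cost of the auxiliary parameter $L$; the paper's version, in exchange, reuses the already-proved containment machinery (\cref{lem_haus_containment} and \eqref{dist_between_n_and_Phi_n}) and so is shorter on the page. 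Both give $d_H(X_k,X_k')\leq\eps$ for all sufficiently large $k$, as required.
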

\begin{proof}
Let $\eps > 0$, and let $m \in \N$ such that $2r^{-m} < \eps$.
Since $\Phi_r(A)\subseteq A$, we have
\[
A\cap [0,r^m)~\supseteq~\Phi_r(A)\cap [0,r^m)~\supseteq~ \Phi_r^2(A)\cap [0,r^m)~\supseteq~ \Phi_r^3(A)\cap [0,r^m)~\supseteq~\ldots.
\]
In particular, the sequence $k\mapsto \Phi_r^k(A)\cap [0,r^m)$ eventually stabilizes, which happens exactly when $\Phi_r^k(A)\cap [0,r^m)=A'\cap [0,r^m)$.
It follows from \eqref{dist_between_n_and_Phi_n} that
\[X_k\subset\left[\frac{\Phi_r^{k-m}(A)\cap [0,r^m)}{{r^m}}\right]_{r^{-m}}.\]
Therefore, for large enough $k$, $X_k\subset[X'_m]_{r^{-m}}$. 
On the other hand it is clear that $X_k'\subset X_k$.
Finally, since from \cref{lem_haus_containment} we have that $d_H(X'_k,X'_m)<r^{-m}$, we conclude that $X_k'\subset X_k\subset [X_m']_{r^{-k}}\subset[X_k']_{2r^{-m}}$, when it follows that $d_H(X_k,X_k')<\epsilon$.
\end{proof}

\begin{proof}[Proof of \cref{prop_correspondence_integers_01-interval}] Define $A'\coloneqq \bigcap_{k\in\Nz}\Phi_r^k(A)$ and $X_k'\coloneqq (A'\cap [0,r^k))/r^k$.
In view of \cref{lem_rise}, the sequence $k\mapsto X_k$ converges with respect to the Hausdorff metric if and only if the sequence $k\mapsto X_k'$ converges.
Since $A'=\Phi_r(A')$, it follows from \cref{lem_haus_containment} that
\[
d_H(X_k',X_l')\leq r^{-k},\qquad\text{for all}~k,l\in\N~\text{with}~l\geq k.
\]
This implies that $k\mapsto X_k'$ is a Cauchy sequence, and hence it is convergent (recall that by the Blaschke selection theorem, the set of all non-empty, compact subsets of $[0,1]$ equipped with the Hausdorff distance is a complete metric space).  Let $X' = \lim_{k \to \infty} X_k'$, and note that $X' \subseteq X$.

Next, let us show that $X$ is $\times  r$-invariant. Since $\Psi_r(A)\subseteq A$, a simple computation shows $T_r(X_k)\subseteq X_{k-1}$.
Therefore, using $X=\lim_{k\to\infty} X_k$ and the fact that $T_r$ is continuous on $[0,1)\setminus \{0,\frac{1}{r},\ldots,\frac{r-1}{r}\}$, we get that for any closed set $C\subseteq [0,1)\setminus \{0,\frac{1}{r},\ldots,\frac{r-1}{r}\}$,
\begin{align*}
T_r(X\cap C)&=  T_r \big(\lim_{k\to\infty}(X_k\cap C) \big)
\\
&= \lim_{k\to\infty} T_r(X_k\cap C)
\\
&\subseteq \lim_{k\to\infty} T_r(X_k)
\\
&\subseteq \lim_{k\to\infty} X_{k-1}
\\
&= X.
\end{align*}
It follows that $T_r(X\setminus \{0,\frac{1}{r},\ldots,\frac{r-1}{r}\})\subseteq X$. Since $0\in X$, we obtain $T(\{0,\frac{1}{r},\ldots,\frac{r-1}{r}\})\subseteq X$, and hence $T_r(X)\subseteq X$, as desired.

Finally, we must show $\dimm X=\dimdm A$. As guaranteed by \cref{cor_correspondence_integers_subshifts}, $\dimdm A= \dimdm A'$. 
By combining part \ref{itm_haus_containment_1} of \cref{lem_haus_containment} with \cref{lem_box_counting_estimate}, we see that
\begin{align}\label{eqn_limsup_is_less}\begin{aligned}
0 &\leq \liminf_{k \to \infty} \left( \frac{\log\NC \big(X_k, r^{-k} \big)}{k\log r} - \frac{\log\NC \big(X, r^{-k}\big)}{k\log r} \right)\\
&= \dimdm A - \limsup_{k \to \infty} \frac{\log\NC \big(X, r^{-k}\big)}{k\log r},
\end{aligned}\end{align}
where the equality follows from the fact that $\dimdm A=\lim_{k\to\infty}\frac{1}{k\log r}\log\NC (X_k, r^{-k} )$ (cf.\ equation \eqref{eqn_mass-dim_to_Mink-dim}). On the other hand, using part \ref{itm_haus_containment_2} of \cref{lem_haus_containment}, \cref{lem_box_counting_estimate}, and the fact that $\dimdm A'=\lim_{k\to\infty}\frac{1}{k\log r}\log\NC (X_k', r^{-k} )$, we see
\begin{align}\label{eqn_limsup_is_more}\begin{aligned}
0 &\leq \liminf_{k \to \infty} \left( \frac{\log\NC \big(X', r^{-k} \big)}{k\log r} - \frac{\log\NC \big(X_k', r^{-k} \big)}{k\log r} \right) \\
&= \liminf_{k \to \infty} \frac{\log\NC \big(X', r^{-k} \big)}{k\log r} - \dimdm A'.
\end{aligned}\end{align}
Combining \eqref{eqn_limsup_is_less} and \eqref{eqn_limsup_is_more} with the fact that $X'\subseteq X$, we see
\[\dimdm A' \leq \liminf_{k \to \infty} \frac{\log\NC \big(X', r^{-k} \big)}{k\log r} \leq \limsup_{k \to \infty} \frac{\log\NC \big(X, r^{-k}\big)}{k\log r} \leq \dimdm A.\]
Since $\dimdm A = \dimdm A'$ and $X'\subseteq X$, we conclude that $\dimm X$ exists and is equal to $\dimdm A$.
\end{proof}

\section{Transversality between multiplicatively invariant subsets of the integers}
\label{sec_integer_transversality}

In this section, we prove our main results, Theorems \ref{prop_discrete_furstenberg}, \ref{mainthm_integer_intersections}, \ref{mainthm_integer_sumsets}, and \ref{theorem_lmp_analogue}.  
As in the other sections, the positive integers $r$ and $s$ are fixed, and the implicit constants appearing in asymptotic notation may depend on $r$ and $s$ without further indication.

\subsection{Sets which are simultaneously multiplicatively invariant}
\label{sec_proof_of_thm_B}

In this subsection we give a proof of \cref{prop_discrete_furstenberg}.  We follow the notation and terminology established in \cref{sec_first_dim_results}.  We say that a non-negative integer $n$ \emph{begins with the word $w$ in base $s$} if there exists $d \in \Nz$ and $n_0 \in [0,s^{d})$ such that
\begin{align}\label{eqn_begins_with_omega}
n = (w)_s s^d + n_0.
\end{align}
If $w = w_0 \cdots w_{\ell - 1}$ and $w_0 \neq 0$, this means that the $\ell$ most significant digits in the base-$s$ expansion of $n$ are $w_0$, $w_1$, \dots, $w_{\ell-1}$, in order.

\begin{lemma}
\label{lemma_interals_determine_starting_words}
For all $w \in \Lambda_s^{\ell}$, there is an arc $I_w \subseteq [0,1)$ modulo 1 (meaning that $I_w$ is an interval when $0$ and $1$ are identified) with the property that for all $x \geq (w)_s$, the integer $\lfloor x \rfloor$ begins with $w$ in base $s$ if and only if $\{ \log x / \log s\} \in I_w$.
\end{lemma}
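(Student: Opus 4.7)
The key observation is that $n \in \Nz$ begins with $w$ in base $r$, in the sense of \eqref{eqn_begins_with_omega}, if and only if there exists $d \in \Nz$ with $(w)_r r^d \leq n < ((w)_r+1) r^d$: indeed, $(w)_r r^d \leq n$ is equivalent to $n = (w)_r r^d + n_0$ for some $n_0 \geq 0$, while $n < ((w)_r+1)r^d$ is equivalent to $n_0 < r^d$. Taking $\log_r$ of this two-sided bound converts it to a condition on $\{\log_r n\}$, and the arc $I_w$ will simply be the set of fractional parts thus produced.

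First I would dispose of the degenerate case $(w)_r = 0$ (equivalently $w = (0,\dots,0)$) by taking $I_w = [0,1)$, since in this case every non-negative integer trivially begins with $w$. In the remaining case $(w)_r \geq 1$, set
\[
\beta \,\defeq\, \{\log_r (w)_r\},\qquad \alpha \,\defeq\, \log_r\big((w)_r+1\big) - \log_r(w)_r \,=\, \log_r\!\Big(1 + \tfrac{1}{(w)_r}\Big) \,\in\,(0,1],
\]
and define $I_w$ to be the image of $[0,\alpha)$ under the map $t\mapsto \{\beta + t\}$; that is,
\[
I_w \,\defeq\, \big\{\{\beta + t\} \ \big| \ t\in[0,\alpha)\big\},
\]
which is an arc of length $\alpha$ in $[0,1)$ modulo $1$ (a half-open interval, or the union of two half-open intervals, or the whole circle).

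Next I would verify the required implication. Suppose $x \geq (w)_r$ and $\{\log_r x\} \in I_w$. Unwinding the definition of $I_w$, there exist $t \in [0,\alpha)$ and an integer $m$ such that
\[
\log_r x \,=\, m + \log_r(w)_r + t.
\]
Since $x\geq (w)_r \geq 1$, we have $\log_r x \geq \log_r(w)_r$, forcing $m + t \geq 0$; since $t < 1$, this gives $m \geq 0$. Exponentiating yields $x = (w)_r r^m \cdot r^t$ with $r^t \in [1,\, ((w)_r+1)/(w)_r)$, hence
\[
(w)_r r^m \,\leq\, x \,<\, \big((w)_r+1\big) r^m.
\]
Because the endpoints are integers, applying $\lfloor\cdot\rfloor$ preserves this two-sided bound, so $\lfloor x\rfloor = (w)_r r^m + n_0$ for some $n_0\in[0,r^m)$; by the opening observation, $\lfloor x\rfloor$ begins with $w$ in base $r$.

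I do not anticipate a genuine obstacle: once the correct arc is identified, the argument is a direct translation between the inequality $(w)_r r^d \leq \lfloor x\rfloor < ((w)_r+1)r^d$ and the fractional part of $\log_r x$. The only mild subtlety is the bookkeeping when $\beta + \alpha > 1$, in which case $I_w$ ``wraps around'' the circle; phrasing $I_w$ as the image of a half-open interval under $t\mapsto\{\beta+t\}$ sidesteps this cleanly, and the same computation applies without modification.
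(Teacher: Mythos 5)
Your proposal is correct and follows essentially the same route as the paper: both convert the definition of ``begins with $w$'' into the two-sided bound $(w)_r r^d \leq \lfloor x\rfloor < ((w)_r+1)r^d$, take $\log_r$, and define $I_w$ as the arc from $\{\log_r (w)_r\}$ to $\{\log_r((w)_r+1)\}$ in the positive direction. Your treatment is if anything slightly more careful than the paper's (explicitly handling the all-zero word and checking $m \geq 0$), but it is the same argument.
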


\begin{proof}
Let $w \in \Lambda_s^{\ell+1}$. It follows from \eqref{eqn_begins_with_omega} that a positive integer $n$ begins with $w$ in base $s$ if and only if there exists $d \in \Nz$ such that
\[(w)_s s^d \leq n < \big( (w)_s + 1) s^d.\]
Therefore, a positive real number $x$ has the property that $\lfloor x \rfloor$ begins with $w$ in base $s$ if and only if there exists $d \in \Nz$ such that
\begin{align*}
    (w)_s s^d \leq x < \big( (w)_s + 1) s^d.
\end{align*}
The previous inequality is equivalent to
\begin{align}\label{proof_of_B_interval_inequality}
\frac{\log (w)_s}{\log s}  + d \leq \frac {\log x}{\log s} < \frac{\log \big( (w)_s + 1\big)}{\log s}  + d.
\end{align}
Let $I_w$ be the modulo 1 arc from the fractional part of $\log (w)_s / \log s$ to the fractional part of $\log \big( (w)_s + 1\big) / \log s$ in the positive direction. We see that for all $x \geq (w)_s$, the integer $\lfloor x \rfloor$ begins with $w$ in base $s$ if and only if \eqref{proof_of_B_interval_inequality} holds, which happens if and only if $\big\{ \log x / \log s \big\} \in I_w$.
\end{proof}

Recall from \cref{sec_continuous_fractal_geometry} that $[A]_\delta$ denotes the $\delta$-neighborhood of $A$.

\begin{lemma}
\label{lemma_B_contains_words_beginning_with_omega}
Let $r$ and $s$ be multiplicatively independent positive integers, and let $A \subseteq \Nz$ be $\times r$ invariant and infinite. If $\lambda, \delta > 0$, $\tau \in \R$, and $B \subseteq \Nz$ are such that $\lambda A + \tau \subseteq [B]_\delta$, then for all $w \in \Lambda_s^*$, there exists an integer in $B$ that begins with $w$ in base $s$.
\end{lemma}

\begin{proof}
Let $w \in \Lambda_s^*$, and let $I_w$ be the arc from \cref{lemma_interals_determine_starting_words}.  Let $I_w'$ be the middle 3rd subinterval of $I_w$, and let $\xi$ be the length of $I_w'$. Define $\al = \log r / \log s$. Since $\alpha$ is irrational, there exists $K \in \N$ such that the set $\big\{\{i \al\} \ | \ i \in \{0, \ldots, K\} \big\}$ is $\xi$-dense in $[0,1)$.

Since $A$ is infinite, there exists $n \in A$ sufficiently large (to be specified momentarily) such that $\lambda n / s^K + \tau \geq (w)_s + \delta + \lambda$. Since $A$ is $\Phi_r$-invariant, $n$, $\lfloor n / r \rfloor$, \dots, $\lfloor n / r^K \rfloor$ are all elements of $A$.  Let $i \in \{0, \ldots, K\}$.  Since $\lambda A + \tau \subseteq [B]_\delta$, the real number $\lambda \lfloor n / r^i \rfloor + \tau$ is within a distance $\delta$ of the set $B$.  Therefore, there exists $t_i \in \R$, $|t_i| \leq \lambda + \delta$, such that $\lambda n / r^i + \tau + t_i \in B$.

By the mean value theorem, ensuring that $n$ is sufficiently large, we see that for all $i \in \{0, \ldots, K\}$,
\begin{align}
\label{eqn_xr-inv_approx}
    \left| \frac{\log \big( \lambda n / r^i + \tau + t_i\big)}{\log s} - \frac{\log \big( \lambda n / r^i\big)}{\log s} \right| < \xi.
\end{align}
It follows from the fact that $\log \big( \lambda n / r^i\big) / \log s = \log (\lambda n) / \log s - i \alpha$ and from our choice of $K$ that there exists $i \in \{0, \ldots, K\}$ such that $\big \{\log \big( \lambda n / r^i\big) / \log s\big\} \in I_w'$.  It follows from \eqref{eqn_xr-inv_approx} and the definition of $\xi$ that $\big \{\log \big( \lambda n / r^i + \tau + t_i\big) / \log s\big\} \in I_w$.  By our choice of $n$ and the fact that $i \leq K$, we have that $\lambda n / r^i + \tau + t_i \geq (w)_s$.  Therefore, \cref{lemma_interals_determine_starting_words} gives that $\lambda n / r^i + \tau + t_i$, an integer in $B$, begins with the word $w$ in base $s$, as was to be shown.
\end{proof}

\begin{proof}[Proof of \cref{prop_discrete_furstenberg}]
Let $r$ and $s$ be multiplicatively independent positive integers, and let $A, B \subseteq \Nz$ be $\times r$- and $\times s$-invariant sets, respectively. 
Suppose $\lambda, \eta > 0$, $\sigma, \tau \in \R$ and $\delta > 0$ are such that $\lambda A + \tau \subseteq \big[ \eta B + \sigma \big]_\delta$.
We need to show that
then either $A$ is finite or $B = \Nz$.

Suppose $A$ is infinite; we will argue that $B = \Nz$.  
Since $B$ is $\times s$-invariant, it suffices to show that for all $w \in \Lambda_s^*$, there exists an integer in $B$ that begins with $w$ in base $s$.

Let $w \in \Lambda_s^*$. 
It follows from \eqref{eqn_affine_falls_in_affine_neighborhood} that $\lambda' A + \tau' \subseteq [B]_{\delta'}$, where $\lambda' = \lambda / \eta$, $\tau' = (\tau - \sigma) / \eta$ and $\delta'=\delta/\eta$. 
Since $A$ is $\times r$-invariant and infinite, \cref{lemma_B_contains_words_beginning_with_omega} gives that some integer in $B$ begins with $w \in \Lambda_s^*$ in base $s$, as was to be shown.
\end{proof}

\subsection{Intersections of multiplicatively independent invariant sets}
\label{sec_intersections_section}

In this subsection, we prove \cref{mainthm_integer_intersections}, showing that $\times r$- and $\times s$-invariant sets are geometrically transverse in the sense that the dimension of the intersection of one with any affine image of the other is small.
In fact we prove the following stronger version.

\begin{theorem}
\label{theorem_main_discrete_intersections_v2}
Let $r$ and $s$ be multiplicatively independent positive integers, and let $A, B \subseteq \Nz$ be $\times r$- and $\times s$-invariant sets, respectively. Define $\ogamma = \max \big(0, \ \dimdh A + \dimdh B - 1 \big)$. 
For every compact set $I \subseteq \R \setminus \{0\}$ and $\eps > 0$,
\begin{align}
\label{eqn_intervals_on_inside_suffices_for_intersections}
    \lim_{N \to \infty} \ \sup_{\substack{\lambda, \eta \in I \\ \sigma, \tau \in \R}} \ \frac{ \big| \big\lfloor \lambda \big( A \cap \big[0,N \big) \big) + \tau \big \rfloor \cap \big\lfloor \eta \big( B \cap \big[0,N \big) \big) + \sigma \big\rfloor \big| } {N^{\ogamma + \eps}}  = 0.
\end{align}
In particular, for all $\lambda, \eta, \sigma, \tau \in \R$,
\begin{align}
\label{eqn_intersection_dim_inequa_for_infinite_sets}
    \udimdm \big( \lfloor \lambda A + \tau \rfloor \cap \lfloor \eta B + \sigma \rfloor \big)  \leq \max \big(0, \ \dimdh A + \dimdh B - 1 \big).
\end{align}
\end{theorem}

\begin{proof}
Let $I \subseteq \R \setminus \{0\}$ be compact and $\eps > 0$.
Since $\big\lfloor \lambda \big( A \cap \big[0,N \big) \big) + \tau \big\rfloor \subseteq \big[ \lambda \big( A \cap \big[0,N \big) \big) + \tau \big]_1$ and $\big\lfloor \eta \big( B \cap \big[0,N \big) \big) + \sigma \big\rfloor \subseteq \big[ \eta \big( B \cap \big[0,N \big) \big) + \sigma \big]_1$, the cardinality in the numerator on the left hand side of \eqref{eqn_intervals_on_inside_suffices_for_intersections} is bounded from above by
\[\NC \big( \big[ \lambda \big( A \cap \big[0,N \big) \big) + \tau \big]_1 \cap \big[ \eta \big( B \cap \big[0,N \big) \big) + \sigma \big]_1, 1 \big),\]
which is quickly seen to be equal to
\begin{align}
\label{eqn_intersections_div_by_N}
    \NC \left( \left[ \lambda \left( \frac {A \cap \big[0,N \big)}{N} \right) + \frac{\tau}{N} \right]_{N^{-1}} \cap \left[ \eta \left( \frac{B \cap \big[0,N \big)}{N} \right) + \frac{\sigma}N \right]_{N^{-1}}, N^{-1} \right).
\end{align}

Define for every $k,\ell\in\N$ the sets 
\[X_k \coloneqq \frac{A\cap[0,r^k)}{r^k} \ \ \text{ and } \ \ Y_\ell \coloneqq \frac{B\cap[0,s^\ell)}{s^\ell}.\]
Define $k_N\coloneqq \lfloor \log N/ \log r\rfloor + 1$ and $\ell_N\coloneqq \lfloor \log N/ \log s\rfloor + 1$, and note that
\[N=r^{k_N} r^{\{\log N/\log r\} - 1} = s^{\ell_N} s^{\{\log N/\log s\} - 1}.\]
Since $N \leq \min(r^{k_N},s^{\ell_N})$, we have that $A \cap [0,N)\subseteq A \cap [0,r^{k_N})$ and $B \cap [0,N)\subseteq B \cap [0,s^{\ell_N})$.  Therefore, the expression in \eqref{eqn_intersections_div_by_N} is bounded from above by
\[\NC \big( \big[ \lambda r^{1 - \{\log N/\log r\}} X_{k_N} + \tau / N \big]_{N^{-1}} \cap \big[ \eta s^{1-\{\log N/\log s\}} Y_{\ell_N} + \sigma / N \big]_{N^{-1}}, N^{-1} \big).\]

Since $I\subseteq \R \setminus \{0\}$ is compact, there exists $t>1$ such that $I \subseteq \pm [t^{-1}, t]$. If $\lambda$ and $\eta$ belong to $I$, then $\lambda r^{1-\{\log N/\log r\}}$ and $\eta s^{1-\{\log N/\log s\}}$ belong to $J \defeq \pm [t^{-1}, \max (r,s) t]$. Therefore, to show \eqref{eqn_intervals_on_inside_suffices_for_intersections}, it suffices to prove
\begin{align}
\label{eqn_suffices_for_intersections_with_xk_yl_reform}
    \lim_{N \to \infty} \ \sup_{\substack{\lambda, \eta \in J \\ \sigma, \tau \in \R}} \ \frac{ \NC \big( \big[ \lambda X_{k_N} + \tau \big]_{N^{-1}} \cap \big[ \eta Y_{\ell_N} + \sigma \big]_{N^{-1}}, N^{-1} \big) } {N^{\ogamma + \eps}}  = 0.
\end{align}

In view of \cref{prop_correspondence_integers_01-interval}, the limits $X:=\lim_{k \to \infty} X_k$ and $Y:=\lim_{\ell \to \infty} Y_\ell$ exist in the Hausdorff metric.
Moreover, $X$ and $Y$ are $\times r$- and $\times s$-invariant, respectively, and $\dimh X=\dimdh A$, $\dimh Y=\dimdh B$.  
By \cref{lem_haus_containment}, we have that $d_H(X_{k_N}, X) \leq N^{-1}$ and $d_H(Y_{\ell_N}, Y) \leq N^{-1}$. Put $a = \max J$, and note that for all $\lambda, \eta \in J$ and $\sigma, \tau \in \R$,
\begin{align}
\label{eqn_replace_approx_to_X_with_X_by_increasing_nbhd}
    \big[ \lambda X_{k_N} + \tau \big]_{N^{-1}} \cap \big[ \eta Y_{\ell_N} + \sigma \big]_{N^{-1}} \subseteq \big[ \lambda X  + \tau \big]_{aN^{-1}} \cap \big[ \eta Y + \sigma \big]_{aN^{-1}}.
\end{align}

We can now manipulate the left hand side of \eqref{eqn_suffices_for_intersections_with_xk_yl_reform} using \eqref{eqn_replace_approx_to_X_with_X_by_increasing_nbhd}, \cref{lemma_metric_entropy_at_scaled_scale}, and \cref{theorem_main_continuous_intersections_v2} (with $J$ as $I$), to get
\begin{align*}
    & \limsup_{N \to \infty} \ \sup_{\substack{\lambda, \eta \in J \\ \sigma, \tau \in \R}} \ \frac{ \NC \big( \big[ \lambda X_{k_N} + \tau \big]_{N^{-1}} \cap \big[ \eta Y_{\ell_N} + \sigma \big]_{N^{-1}}, N^{-1} \big) } { N^{\ogamma + \eps}} \\
    & \qquad \leq  \limsup_{N \to \infty} \ \sup_{\substack{\lambda, \eta \in J \\ \sigma, \tau \in \R}} \ \frac{ \log \NC \big( \big[ \lambda X + \tau \big]_{aN^{-1}} \cap \big[ \eta Y + \sigma \big]_{aN^{-1}}, N^{-1} \big) } { N^{\ogamma + \eps}} \\
    & \qquad \ll  \lim_{N \to \infty} \ \sup_{\substack{\lambda, \eta \in J \\ \sigma, \tau \in \R}} \ \frac{ \NC \big( \big[ \lambda X + \tau \big]_{aN^{-1}} \cap \big[ \eta Y + \sigma \big]_{aN^{-1}}, a N^{-1} \big) } { (aN)^{\ogamma + \eps}} = 0.
\end{align*}
This verifies \eqref{eqn_suffices_for_intersections_with_xk_yl_reform} and concludes the proof of \eqref{eqn_intervals_on_inside_suffices_for_intersections}.

To show \eqref{eqn_intersection_dim_inequa_for_infinite_sets}, let $\lambda, \eta, \sigma, \tau \in \R$.  Put $M = 3\max \big( |\lambda|^{-1}, |\eta|^{-1} \big)$, and note that for all $N \geq \max \big(|\sigma|, |\tau| \big)$,
\[\big\lfloor \lambda A + \tau \big\rfloor \cap \big\lfloor \eta B + \sigma \big\rfloor \cap \big[0, N \big) \subseteq \big\lfloor \lambda \big(A \cap\big[0, MN \big) \big) + \tau \big\rfloor \cap \big\lfloor \eta \big( B \cap  \big[0, MN \big) \big)+ \sigma \big\rfloor.\]
It follows from this containment and \eqref{eqn_intervals_on_inside_suffices_for_intersections} that for all $\eps > 0$,
\[\frac{1}{M^{\ogamma + \eps}} \lim_{N \to \infty} \frac{\big| \big\lfloor \lambda A + \tau \big\rfloor \cap \big\lfloor \eta B + \sigma \big\rfloor \cap \big[0, N \big) \big|}{N^{\ogamma + \eps}}  = 0.\]
This proves \eqref{eqn_intersection_dim_inequa_for_infinite_sets} and concludes the proof of the theorem.
\end{proof}

\begin{remark}
We note two modifications to the statement of \cref{theorem_main_discrete_intersections_v2} that can be proved with minor corresponding modifications made to the proof.  First, the initial interval $[0,N)$ can be replaced by an interval symmetric about the origin, $(-N, N)$.  Though $A$ and $B$ consist of positive integers, this is meaningful because the theorem allows for $\lambda$ and/or $\eta$ to be negative. Second, using the floor function to round to the integer lattice is a mere convenience: the result hold when the sets $\lambda X + \tau$ and $\eta Y + \sigma$ are rounded to any other discrete subgroup (or translate of a discrete subgroup) of $\R$.
\end{remark}

\subsection{Sums of multiplicatively independent invariant sets}\label{sec_sumsintegers}

In this subsection, we prove \cref{mainthm_integer_sumsets}, showing that sets which are multiplicatively invariant with respect to multiplicatively independent bases are transverse in an additive combinatorial sense.  The results can be phrased in terms of the size (cardinality or Hausdorff content) of finite subsets of multiplicatively invariant sets.  The upper bounds on the size of the sumsets are contained in \cref{lemma_upper_bounds_on_dims_of_sumsets} and follow from general considerations.  The difficulty in the main results is in proving the lower bounds, which are handled in \cref{theorem_main_discrete_sums_v2} and are derived from their continuous counterparts in \cref{theorem_main_continuous_sumsets_v2}.

\begin{lemma}
\label{lemma_upper_bounds_on_dims_of_sumsets}
For all finite, non-empty $A', B' \subseteq \N_0$, all $\lambda, \eta > 0$, and all $0 \leq \gamma \leq 1$,
\begin{align}
\label{eqn_mass_dim_sumset_upper_bound}
    \big|  \big\lfloor \lambda A' + \eta B' \big\rfloor \big| &\leq \big| A' \times B' \big|,\\
\label{eqn_haus_content_sumset_upper_bound}
    \HC_{\geq 1}^{\gamma} \big( \big\lfloor \lambda A' + \eta B' \big\rfloor \big) &\ll_{\max(\lambda, \eta)} \HC_{\geq 1}^{\gamma} \big( A' \times B' \big).
\end{align}
Moreover, for all $A, B \subseteq \Nz$, all $\dim \in \{\ldimdm, \udimdm, \ldimdh, \udimdh\}$, and all $\lambda, \eta > 0$,
\[\dim \big( \lfloor \lambda A + \eta B \rfloor \big) \leq \min \big( 1, \dim (A \times B) \big).\]
\end{lemma}

\begin{proof}
Let $A', B' \subseteq \N_0$ be finite, non-empty, let $\lambda, \eta > 0$, and let $0 \leq \gamma \leq 1$. Denote by $\varphi: \R^2 \to \R$ the map $\varphi(x,y) = \lambda x + \eta y$; it is Lipschitz with Lipschitz constant $\max(\lambda, \eta)$.  Note that $\varphi (A' \times B') = \lambda A' + \eta B'$.

The upper bound in \eqref{eqn_mass_dim_sumset_upper_bound} follows from the fact that $|\lfloor \varphi(A' \times B')\rfloor| \leq |\varphi(A' \times B')| \leq |A' \times B'|$, while the upper bound in \eqref{eqn_haus_content_sumset_upper_bound} follows from \cref{lemma_hausdorff_near_implies_contents_near} and \cref{lemma_hausdorff_content_bound_under_lipschitz_map} via
\[\HC_{\geq 1}^{\gamma} \big( \lfloor \varphi(A' \times B') \rfloor \big) \asymp\HC_{\geq 1}^{\gamma} \big( \varphi( A' \times B') \big) \ll_{\max(\lambda, \eta)} \HC_{\geq 1}^{\gamma} \big( A' \times B' \big).\]

To prove the dimension inequality for $A, B \subseteq \Nz$, note that there exists $M \in \N$, depending only on $\max(\lambda,\eta)$, such that for all $N \in \N$,
\begin{align}
    \label{eqn_containment_on_finite_segments}
    \big\lfloor \lambda A + \eta B \big\rfloor \cap [0,N) \subseteq \big\lfloor \lambda (A \cap [0,NM)) + \eta (B \cap [0,NM)) \big\rfloor.
\end{align}
Let $\dim \in \{\ldimdm, \udimdm, \ldimdh, \udimdh\}$, and let $\gamma > \dim (A \times B)$.  It follows from \eqref{eqn_mass_dim_sumset_upper_bound}, \eqref{eqn_haus_content_sumset_upper_bound}, and \eqref{eqn_containment_on_finite_segments} that
\begin{align*}
    \frac{\big|\big\lfloor \lambda A + \eta B \big\rfloor \cap [0,N)\big|}{N^{\gamma}} &\leq M^{\gamma }\frac{\big|\big( A \times B \big) \cap [0,NM)^2\big| }{(NM)^{\gamma}}, \\
    \frac{\HC_{\geq 1}^{\gamma} \big( \big\lfloor \lambda A + \eta B \big\rfloor \cap [0,N)\big)}{N^{\gamma}} &\ll_{\max(\lambda, \eta)} M^{\gamma }\frac{\HC_{\geq 1}^{\gamma} \big(\big( A \times B \big) \cap [0,NM)^2\big) }{(NM)^{\gamma}}.
\end{align*}
Considering the first or second inequality (if $\dim$ is the discrete Minkowski or Hausdorff dimension, respectively), the limit infimum or limit supremum (if $\dim$ is a lower or upper dimension, respectively) of the quantity on the right hand side is equal to zero because $\gamma > \dim (A \times B)$.  It follows that $\dim \big( \lfloor \lambda A + \eta B \rfloor \big) \leq \gamma$. This suffices for the conclusion of the lemma since $\gamma > \dim (A \times B)$ was arbitrary and since $\dim \big( \lfloor \lambda A + \eta B \rfloor \big)$ is clearly bounded from above by 1.
\end{proof}

\begin{theorem}
\label{theorem_main_discrete_sums_v2}
Let $r$ and $s$ be multiplicatively independent positive integers, and let $A, B \subseteq \Nz$ be $\times r$- and $\times s$-invariant sets, respectively. Define $\ogamma = \max(0, \dimh (A \times B) - 1)$.  For all compact $I \subseteq (0, \infty)$, all $0\leq \gamma \leq 1$, all $\eps > 0$, all sufficiently large $N$ (depending on $A$, $B$, $I$, $\gamma$, and $\eps$), all non-empty $A' \subseteq A \cap [0,N)$ and $B' \subseteq B \cap [0,N)$, and all $\lambda, \eta \in I$,
\begin{align}
\label{eqn_equivalence_D_E_1_mass_dim}
    \big|  \big\lfloor \lambda A' + \eta B' \big\rfloor \big| &\geq \frac{\big| A' \times B' \big|}{N^{\ogamma + \eps}}, \text{ and}\\
\label{eqn_equivalence_D_E_1}
    \frac{\HC_{\geq 1}^{\gamma} \big( \big\lfloor \lambda A' + \eta B' \big\rfloor \big)}{N^{\gamma}} &\gg_{I, \gamma, \eps} \frac{\HC_{\geq 1}^{\gamma + \ogamma + \eps} \big( A' \times B' \big)}{N^{\gamma + \ogamma + \eps}}.
\end{align}
\end{theorem}

\begin{proof}
For all $k,\ell\in\N$, define the sets 
\[X_k \coloneqq \frac{A\cap[0,r^k)}{r^k} \quad \text{ and } \quad Y_\ell \coloneqq \frac{B\cap[0,s^\ell)}{s^\ell}.\]
Let $X=\lim_{k \to \infty} X_k$ and $Y=\lim_{\ell \to \infty} Y_\ell$ in the Hausdorff metric; \cref{prop_correspondence_integers_01-interval} gives that these limits exist, that $X$ and $Y$ are $\times r$- and $\times s$-invariant subsets of $[0,1]$, respectively, and that $\dimh X=\dimdh A$ and $\dimh Y=\dimdh B$.  For $N \in \N$, define $k_N\coloneqq \lfloor \log N/ \log r\rfloor + 1$ and $\ell_N \coloneqq \lfloor \log N/ \log s\rfloor + 1$, and note that
\begin{align}
\label{eqn_N_written_as_powers_of_r_and_s}
    N=r^{k_N} r^{\{\log N/\log r\}-1}=s^{\ell_N} s^{\{\log N/\log s\}-1}.
\end{align}
By \cref{lem_haus_containment}, we have that
\begin{align}
\label{eqn_dist_from_approx_to_full_sets}
    d_H(X_{k_N}, X) \leq N^{-1} \text{ and }d_H(Y_{\ell_N}, Y) \leq N^{-1}.
\end{align}

Let $I \subseteq (0, \infty)$ be compact, $0 \leq \gamma \leq 1$, and $\eps > 0$. Define $J \defeq [\min I, rs \max I]$.  
Next we invoke \cref{theorem_main_continuous_sumsets_v2} with $J$ in place of $I$ and either $\eps/2$ in place of $\eps$ (to prove \eqref{eqn_equivalence_D_E_1_mass_dim}) or $\eps$ as it is (to prove \eqref{eqn_equivalence_D_E_1}).
Let $N$ be sufficiently large, to be specified later, but in particular so that $\rho:=1/N$ is sufficiently small for \cref{theorem_main_continuous_sumsets_v2} to apply (with $\eps$ as either $\eps / 2$ or $\eps$).

Let $A' \subseteq A \cap [0,N)$ and $B' \subseteq B \cap [0,N)$ be non-empty, and $\lambda, \eta \in I$.  
It follows from \eqref{eqn_N_written_as_powers_of_r_and_s} that $N \leq \min(r^{k_N}, s^{\ell_N})$, whereby
\[\frac{A'}{r^{k_N}} \subseteq X_{k_N} \text{ and } \frac{B'}{s^{k_N}} \subseteq Y_{k_N}.\]
Combining these facts with \eqref{eqn_dist_from_approx_to_full_sets}, it follows from \cref{lemma_subsets_for_hausdorff_distance} that there exist non-empty compact sets $X' \subseteq X$ and $Y' \subseteq Y$ such that
\begin{align}
\label{eqn_Xprime_and_Yprime_as_near_subsets}
    d_H \left( X', \frac{A'}{r^{k_N}} \right) \leq N^{-1} \text{ and } d_H \left( Y', \frac{B'}{s^{\ell_N}} \right) \leq N^{-1}.
\end{align}
Define $\lambda' = r^{k_N} \lambda / N = r^{1-\{\log N/\log r\}} \lambda$ and $\eta' = s^{\ell_N} \eta / N = s^{1-\{\log N/\log s\}} \eta$.  Note that $\lambda', \eta' \in J$ and that
\begin{align}
\label{eqn_sumset_equality_in_proof_of_discrete_sumset_theorem}
    \lambda' \frac{A'}{r^{k_N}} + \eta' \frac{B'}{s^{\ell_N}} = \frac{\lambda A' + \eta B'}{N}.
\end{align}
Combining \eqref{eqn_Xprime_and_Yprime_as_near_subsets} and \eqref{eqn_sumset_equality_in_proof_of_discrete_sumset_theorem} with basic properties of the Hausdorff distance, we see that
\begin{align}
\label{eqn_sumset_hausdorff_distance_1}
    d_H \left( \lambda' X' + \eta' Y', \frac{\lambda A' + \eta B'}{N}\right) &\leq 2 rs \max (I) N^{-1}, \text{ and} \\
    \label{eqn_product_set_hausdorff_distance_1}
    d_H \left( X' \times Y', \frac{A'}{r^{k_N}} \times \frac{B'}{s^{\ell_N}} \right) &\leq N^{-1}.
\end{align}
It follows from \cref{lemma_hausdorff_content_bound_under_lipschitz_map} and \eqref{eqn_product_set_hausdorff_distance_1} that
\begin{gather}
\label{eqn_sumset_hausdorff_distance_2}
    \NC \left( X' \times Y', N^{-1} \right) \asymp \NC \left( \frac{A' \times B'}{N}, N^{-1} \right) = \NC \left( A' \times B', 1 \right) = |A' \times B'|, \text{ and} \\
    \label{eqn_product_set_hausdorff_distance_2}
    \HC_{\geq N^{-1}}^{\gamma + \ogamma + \eps} \left( X' \times Y' \right) \asymp \HC_{\geq N^{-1}}^{\gamma + \ogamma + \eps} \left( \frac{A' \times B'}{N} \right) = \frac{\HC_{\geq 1}^{\gamma + \ogamma + \eps} \left( A' \times B' \right)}{N^{\gamma + \ogamma + \eps}}.
\end{gather}

Appealing to \eqref{eqn_sumset_hausdorff_distance_1},  \cref{lemma_hausdorff_near_implies_contents_near}, \cref{theorem_main_continuous_sumsets_v2} (with $\eps / 2$ as $\eps$), and \eqref{eqn_sumset_hausdorff_distance_2}, we see that
\begin{gather*}
    |\lfloor \lambda A' + \eta B' \rfloor| 
    \asymp
    \NC \Big( \lambda A' + \eta B', 1 \Big) 
    =
    \NC \left( \frac{\lambda A' + \eta B'}{N}, N^{-1} \right) 
    \\ \asymp_I
    \NC \big( \lambda' X' + \eta' Y', N^{-1} \big) 
    \geq 
    \frac{\NC \big( X' \times Y', N^{-1} \big)}{N^{\ogamma + \eps / 2}}
    \asymp 
    \frac{|A' \times B'|}{N^{\ogamma + \eps / 2}}.
\end{gather*}
Thus, there exists a constant $C > 0$ depending only on $r$, $s$, and $I$ for which $|\lfloor \lambda A' + \eta B' \rfloor| \geq |A' \times B'| / (CN^{\ogamma + \eps / 2})$.  The inequality in \eqref{eqn_equivalence_D_E_1_mass_dim} follows as long as $N^{\eps/2} > C$.

Replacing cardinality and packing number with the $\gamma$-dimensional discrete Hausdorff content and appealing to  \eqref{eqn_sumset_hausdorff_distance_1}, \cref{lemma_hausdorff_near_implies_contents_near}, \cref{theorem_main_continuous_sumsets_v2} (with $\eps$ as $\eps$), and \eqref{eqn_product_set_hausdorff_distance_2} in the same way, we see that
\begin{gather*}
    \frac{\HC_{\geq 1}^{\gamma} \big( \big\lfloor \lambda A' + \eta B' \big\rfloor \big)}{N^\gamma}
    \asymp
    \frac{\HC_{\geq 1}^{\gamma} \big( \lambda A' + \eta B' \big)}{N^\gamma} 
    =
    \HC_{\geq N^{-1}}^{\gamma} \left( \frac{\lambda A' + \eta B'}{N} \right)
    \\\asymp_I 
    \HC_{\geq N^{-1}}^{\gamma} \big( \lambda' X' + \eta' Y' \big)
    \gg_{I,\gamma,\eps}
    \HC_{\geq N^{-1}}^{\gamma + \ogamma + \eps} \big( X' \times Y' \big)
    \asymp
    \frac{\HC_{\geq 1}^{\gamma + \ogamma + \eps} \big( A' \times B' \big)}{ N^{\gamma+\ogamma + \eps}}.
\end{gather*}
This is precisely the inequality in \eqref{eqn_equivalence_D_E_1}, completing the proof.
\end{proof}

In the following corollary, note that it is a consequence of \cref{cor_dim_of_products_of_invariant_sets} that all four discrete notions of dimension, $\ldimdm, \udimdm, \ldimdh, \udimdh$, coincide for multiplicatively invariant sets $A$ and $B$ and their Cartesian product $A \times B$.  In particular,
\[\dim (A \times B) = \dim A + \dim B\]
for any $\dim \in \{\ldimdm, \udimdm, \ldimdh, \udimdh\}$.

\begin{corollary}
\label{cor_dim_of_sumsets_of_artbirary_subsets_v2}
Let $r$ and $s$ be multiplicatively independent positive integers, and let $A, B \subseteq \Nz$ be $\times r$- and $\times s$-invariant sets, respectively. For all $\dim \in \{\ldimdm, \udimdm, \ldimdh, \udimdh\}$ and $\lambda, \eta \in (0,\infty)$,
\begin{align}
\label{eqn_correct_dim_for_sumsets_of_invariant_sets}
    \dim \big( \lfloor \lambda A + \eta B \rfloor \big) = \min \big(1, \dim (A \times B) \big).
\end{align}
Moreover, for all $A' \subseteq A$ and $B' \subseteq B$,
\begin{itemize}
    \item if $\dim A + \dim B \leq 1$, then
\begin{align}
\label{cor_ldimdh_for_arbitrary_subsets_discrete_sumsets_case_1}
    \dim \big( \lfloor \lambda A' + \eta B' \rfloor \big) = \dim \big( A' \times B' \big);
\end{align}
    \item if $\dim A + \dim B > 1$, then
\begin{align}
\label{cor_ldimdh_for_arbitrary_subsets_discrete_sumsets_case_2}
    \dim \big( \lfloor \lambda A' + \eta B' \rfloor \big) \geq \dim \big( A' \times B' \big) - \dim \big(A \times B \big) + 1.
\end{align}
\end{itemize}
\end{corollary}

\begin{proof}
First, note that \eqref{eqn_correct_dim_for_sumsets_of_invariant_sets} is a consequence of \eqref{cor_ldimdh_for_arbitrary_subsets_discrete_sumsets_case_1} and \eqref{cor_ldimdh_for_arbitrary_subsets_discrete_sumsets_case_2}.  
Indeed, setting $A' = A$ and $B' = B$, if $\dim A + \dim B \leq 1$ then \eqref{eqn_correct_dim_for_sumsets_of_invariant_sets} becomes \eqref{cor_ldimdh_for_arbitrary_subsets_discrete_sumsets_case_1}, and if $\dim A + \dim B > 1$ then \eqref{cor_ldimdh_for_arbitrary_subsets_discrete_sumsets_case_2} implies that $\dim \big( \lfloor \lambda A + \eta B \rfloor \big) \geq 1$.  
Since any subset of $\Nz$ has dimension at most $1$, \eqref{eqn_correct_dim_for_sumsets_of_invariant_sets} follows in this case as well.

Define $\ogamma = \max(0, \dimh (A \times B) - 1)$, and let $A' \subseteq A$ and $B' \subseteq B$. To show \eqref{cor_ldimdh_for_arbitrary_subsets_discrete_sumsets_case_1} and \eqref{cor_ldimdh_for_arbitrary_subsets_discrete_sumsets_case_2},
it suffices to show
\begin{align}
\label{eqn_sufficient_to_show_in_discrete_corollary}
    \dim \big( \lfloor \lambda A' + \eta B' \rfloor \big) \geq \dim \big( A' \times B' \big) - \ogamma.
\end{align}
Indeed, this is the lower bound in \eqref{cor_ldimdh_for_arbitrary_subsets_discrete_sumsets_case_2}, and the upper bound guaranteed by \cref{lemma_upper_bounds_on_dims_of_sumsets} combined with this lower bound gives the desired equality in \eqref{cor_ldimdh_for_arbitrary_subsets_discrete_sumsets_case_1}.

Let $\dim \in \{\ldimdm, \udimdm, \ldimdh, \udimdh\}$ and $\lambda, \eta \in (0,\infty)$.  If $\dim(A' \times B') = 0$, the conclusion is immediate, so we can proceed under the assumption that $\dim(A' \times B') > 0$.

There exists $M \in \N$ such that for all $N \in \N$,
\begin{align*}
    \big\lfloor \lambda A' + \eta B' \big\rfloor \cap [0,N) \supseteq \Big\lfloor\lambda \big( A' \cap [0,N/M) \big) + \eta \big( B' \cap [0,N/M) \big)\Big\rfloor.
\end{align*}
Let $\eps > 0$, and let $\gamma = \dim(A' \times B') - \ogamma - 2\eps$.  
Let $N$ be large enough that \cref{theorem_main_discrete_sums_v2} holds with $N/M$ in place of $N$, and define $A'' = A' \cap [0,N/M)$ and $B'' = B' \cap [0,N/M)$.  
It follows from \cref{theorem_main_discrete_sums_v2} that
\begin{align*}
    \frac{\big| \big( \lambda A' + \eta B' \big) \cap [0,N) \big|}{N^\gamma} 
    &\geq
    \frac{|A'' \times B''|}{N^\gamma(N/M)^{\ogamma + \eps}} 
    =
    M^{\ogamma + \eps}\frac{|( A' \times B') \cap [0,N/M)^2|}{N^{\gamma + \ogamma + \eps}},
    \\
    \frac{\HC_{\geq 1}^{\gamma} \big( \big\lfloor \lambda A' + \eta B' \big\rfloor\cap[0,N) \big)}{N^{\gamma}} 
    & \gg_{\lambda,\eta,\gamma,\eps} 
    \frac{\HC_{\geq 1}^{\gamma + \ogamma + \eps}(A'' \times B'')}{N^{\gamma} (N/M)^{\ogamma + \eps}}
    \\
    &=
    M^{\ogamma + \eps} \frac{\HC_{\geq 1}^{\gamma + \ogamma + \eps}(( A' \times B') \cap [0,N/M)^2)}{N^{\gamma + \ogamma + \eps}}.
\end{align*}
Consider the first inequality if $\dim$ is the discrete Minkowski dimension and the second inequality if $\dim$ is the discrete Hausdorff dimension.  Because $\gamma + \ogamma + \eps = \dim(A' \times B') - \eps$, the limit infimum (if $\dim$ is a lower dimension) or limit supremum (if $\dim$ is an upper dimension) as $N$ tends to infinity of the right hand side is positive. It follows that
\[\dim \big( \lfloor \lambda A' + \eta B' \rfloor \big) \geq \gamma.\]
The inequality in \eqref{eqn_sufficient_to_show_in_discrete_corollary} now follow from the fact that $\gamma=\dim (A' \times B') - \ogamma - 2\eps$ and $\eps  >0$ was arbitrary, concluding the proof.
\end{proof}

\subsection{An example that shows \texorpdfstring{$\Phi$}{R}-invariance does not suffice}
\label{section_counterexample}

Fix $2 \leq r < s$. In this section, we construct two sets $A, B \subseteq \Nz$ which satisfy the following properties:
\begin{enumerate}[label=(\Roman*)]
    \item\label{example_mass_dim_exists} the mass dimensions of $A$ and $B$ exist and $\dimdm A = \dimdm B = 1/2$;
    \item\label{example_mult_invariance} $rA \subseteq A$ and $sB \subseteq B$;
    \item\label{example_phi_invariance} $\Phi_r(A) = A$ and $\Phi_s(B)  = B$; and
    \item\label{example_sumset_small_dim} $\udimdm(A+B) \leq 4/5$.
\end{enumerate}

This example demonstrates that neither $\Phi$-invariance nor the invariance indicated in \ref{example_mult_invariance} suffice to obtain the result in \cref{cor_dim_of_sumsets_of_artbirary_subsets_v2}. This is in contrast to \cref{prop_discrete_furstenberg}, where the conclusion holds under the weaker assumption that the sets $A$ and $B$ are $\Phi_r$- and $\Phi_s$-invariant, respectively.  We do not know whether $\Psi$-invariance alone suffices in either \cref{prop_discrete_furstenberg} or \cref{cor_dim_of_sumsets_of_artbirary_subsets_v2}, but invariance under multiplication by $r$ and $s$ (in the sense of \ref{example_mult_invariance}) does not suffice to reach the conclusions in either theorem: the set of squares is invariant under multiplication by both 4 and 9 simultaneously, but has dimension equal to $1/2$, while the sets $A$ and $B$ above demonstrate that \cref{cor_dim_of_sumsets_of_artbirary_subsets_v2} does not hold under the assumption of invariance under multiplication.

In what follows, the interval notation $[a,b]$ is understood to mean $[a,b] \cap \Nz$. For $i, j \in \Nz$, let
\begin{align*}
    I_i = [r^i, r^i + r^{(i+1)/2}], \qquad J_j = [s^j, s^j + s^{(j+1)/2}],
\end{align*}
and then define
\begin{align*}
    A = \{0\} \cup \bigcup_{i, \ell \geq 0} r^\ell I_i, \qquad B = \{0\} \cup \bigcup_{j,m \geq 0} s^m J_j.
\end{align*}

First we will verify \ref{example_mass_dim_exists} by showing that the mass dimension of $A$ exists and is equal to $1/2$; the argument for $B$ is the same. It is easy to see that for all $N \geq 1$,
\[I_{N-1} \subseteq A \cap [1,r^N) \subseteq \bigcup_{\substack{i, \ell \geq 0 \\ i + \ell \leq N}} r^\ell I_i,\]
from which it follows that
\[r^{N/2} \leq \big|A \cap [0,r^N) \big| \leq (N+1)^2 (r^{(N+1) / 2}+1).\]
This shows that $\ldimdm A = \udimdm A = \dimdm A = 1/2$.

It is clear from the definition of the sets $A$ and $B$ that \ref{example_mult_invariance} holds.

Next we will verify \ref{example_phi_invariance} by showing that $\Phi_r(A) = A$; the same argument works to show that $\Phi_s(B) = B$. Since $r A \subseteq A$, we have that
\[A = \Phi_r(rA) \subseteq \Phi_r(A) = \{0\} \cup \bigcup_{i, \ell \geq 0} \Phi_r(r^\ell I_i).\]
Since $0 \in A$, we need only to verify that for all $i, \ell \geq 0$, $\Phi_r(r^\ell I_i) \subseteq A$. If $\ell \geq 1$, then $\Phi_r(r^\ell I_i) = r^{\ell-1} I_i \subseteq A$.  If $\ell = 0$ and $i = 0$, then we see $\Phi_r(I_0) = \{0\} \subseteq A$. If $\ell = 0$ and $i \geq 1$, then we see $\Phi_r(I_i) = [r^{i-1}, r^{i-1} + r^{(i-1)/2}] \subseteq I_{i-1} \subseteq A$.  Thus, $\Phi_r(A) = A$.

Finally we will verify \ref{example_sumset_small_dim} by showing that for all $N$ sufficiently large,
\begin{align}\label{example_inequality_to_show_for_sumset}
    \big|(A+B) \cap [0,r^N) \big| \leq 4 N^4 r^{4N / 5}.
\end{align}
Let $\sigma = \log s / \log r$. Because
\[B \cap [1,r^N) \subseteq \bigcup_{\substack{i, \ell \geq 0 \\ \sigma(j + m) \leq N}} s^m J_j,\]
we have that
\begin{align}\label{example_sum_to_split}
    \big|(A+B) \cap [0,r^N) \big| \leq 1+ \sum_{i,j,\ell,m} |r^\ell I_i + s^m J_j|,
\end{align}
where the sum is over all $i, j, \ell, m \geq 0$ for which $i + \ell \leq N$ and $\sigma(j+m) \leq N$.  We will estimate this sum from above by splitting the sum indices into two sets depending on the ``type'' of the pair $(i,j)$, which we now define.

A pair $(i,j)$ is of Type I if
\begin{align*}
    \frac{i+1}2 + \sigma \frac{j+1}2 \leq \frac {4N}5.
\end{align*}
Using the trivial bound $|C+D| \leq |C||D|$ for finite sets $C, D \subseteq \Nz$, we see that if $i, j, \ell$, and $m$ are such that $(i,j)$ is of Type I, then
\begin{align}\label{example_bound_for_type_I}
    |r^\ell I_i + s^m J_j| \leq |I_i||J_j| = r^{(i+1)/2} s^{(j+1)/2} \leq r^{4N/5}.
\end{align}

A pair $(i,j)$ is of Type II if it is not of Type I, that is, if
\begin{align}\label{example_type_two_inequality}
    \frac{i+1}2 + \sigma \frac{j+1}2 > \frac {4N}5.
\end{align}
Using the fact that $\sigma j \leq N$ and that $N$ is sufficiently large, we see from \eqref{example_type_two_inequality} that $(i-1)/2 > N/4$. It follows then from the fact that $i + \ell \leq N$ that
\begin{align}\label{example_key_inequality_one}
\ell + \frac{i+1}2 < \frac{4N}5.
\end{align}
Similarly, using that $i \leq N$ and the fact that $N$ is sufficiently large, we see from \eqref{example_type_two_inequality} that $\sigma (j-1) / 2 > N/ 4$.  It follows from the fact that $\sigma (j + m) \leq N$ that 
\begin{align}\label{example_key_inequality_two}
    \sigma \left( m + \frac{j+1}2 \right) < \frac{4N}5.
\end{align}
Now we are in a position to use the following fact: if $C, D \subseteq \Nz$ are contained in intervals of length $L$, $M$, respectively, then $C+D$ is contained in an interval of length $L+M$ and hence $|C+D| \leq L+M+1$. If $i, j, \ell$, and $m$ are such that $(i,j)$ is of Type II, then
\[|r^\ell I_i + s^m J_j| \leq r^{\ell + (i+1)/2} + s^{m + (j+1)/2} + 1.\]
Using \eqref{example_key_inequality_one} and \eqref{example_key_inequality_two}, we have that
\begin{align}\label{example_bound_for_type_II}
    |r^\ell I_i + s^m J_j| \leq 3 r^{4N/5}.
\end{align}

Finally, by splitting up the sum in \eqref{example_sum_to_split} into tuples for which the pairs $(i,j)$ are of Type I or Type II, we see by combining \eqref{example_bound_for_type_I} and \eqref{example_bound_for_type_II} that
the desired inequality in \eqref{example_inequality_to_show_for_sumset} holds.

\subsection{Iterated sums of a multiplicatively invariant set}
\label{sec_proof_of_thm_C}

In this section, we will prove \cref{theorem_lmp_analogue}. The strategy is to use tools from \cref{sec_connection_01-interval} to derive \cref{theorem_lmp_analogue} from the theorem of Lindenstrauss-Meiri-Peres, \cref{LMP_theorem}. Throughout this section, $r \geq 2$ is fixed and all of the asymptotic notation may implicitly depend on it.

\begin{remark}\label{lmp_remark}
There are some useful remarks to make before the proof.  Let $X_1$, $X_2$, \dots, $X_n \subseteq [0,1]$ be $\times r$-invariant sets.  The sumset $X_1 + \cdots + X_n$ may be interpreted in $\R/\Z$ or in $\R$.  Denote temporarily by $W_n$ the set $X_1 + \cdots + X_n$ interpreted modulo 1 as a subset of $[0,1]$ and by $Y_n$ the set $X_1 + \cdots + X_n$ interpreted in $\R$ as a subset of $[0,n]$. Two facts of particular relevance to us are: 1) set $W_n$ is $\times r$-invariant, and 2) $\dimh W_n = \dimh Y_n$.  The first fact follows easily from the fact that multiplication by $r$ is a group endomorphism of $(\R/\Z,+)$.  (In contrast, note that the sumset of $\times r$-invariant subsets of $\Nz$ is not necessarily $\times r$-invariant: if $A$ is the base-$10$ restricted digit Cantor set with allowed digits $0$ and $5$, then $A+A$ contains $10$ but does not contain $\Phi_{10}(10) = 1$, for example). The second fact follows immediately by writing $W_n = \cup_{i=0}^{n-1} \big((Y_n \cap [i,i+1]) - i\big)$ and using the translation-invariance and finite (countable) stability under unions of the Hausdorff dimension.
\end{remark}

\begin{proof}[Proof of \cref{theorem_lmp_analogue}]
Recall that $(A_i)_{i =1}^\infty$ is a sequence of $\times r$-invariant subsets of $\Nz$. For each $i \in \N$, let $A_i'$ be the set described in \cref{cor_correspondence_integers_subshifts}, and define $X_i \subseteq [0,1]$ to be the Hausdorff limit of the sequence $(A_i' \cap [0,r^N) / r^N)_{N =1}^\infty$ as in \cref{prop_correspondence_integers_01-interval}.  Since $\dimh X_i = \dimdh A_i' = \dimdh A_i$ and $\sum_{i=1}^\infty \dimdh A_i / \allowbreak | \log \dimdh A_i |$ diverges, we have that $\sum_{i=1}^\infty \dimdh X_i / \allowbreak | \log \dimdh X_i |$ diverges.  It follows by \cref{LMP_theorem} that
\begin{align}\label{eqn_lmp_result_cts}
    \lim_{n \to \infty} \dimh \big( X_1+ \cdots + X_n \big) = 1.
\end{align}
According to \cref{lmp_remark}, we can and will interpret the sum $X_1+ \cdots + X_n$ to be in $\R$.

We claim now that for all $n \in \N$, the discrete Hausdorff dimension of the set $A_1' + \cdots + A_n'$ exists and
\begin{align}\label{more_than_sufficient_in_LMP}
    \dimdh \big( A_1' + \cdots + A_n' \big) = \dimh \big( X_1+ \cdots + X_n \big).
\end{align}
Combined with \eqref{eqn_lmp_result_cts}, this suffices to conclude the proof of \cref{theorem_lmp_analogue} since $A_i' \subseteq A_i$ implies that $\dimdh \big( A_1' + \cdots + A_n' \big) \leq \ldimdh \big( A_1 + \cdots + A_n \big)$.

To show \eqref{more_than_sufficient_in_LMP}, let $n \in \N$, and define $k = \lfloor \log n / \log r \rfloor + 1$.  Define $B_n = A_1' + \cdots + A_n'$ and $Y_n = X_1 + \cdots + X_n$, where the sum defining $Y_n$ is understood to be in $\R$. Note that for all $N \geq k$,
\begin{align}\label{lmp_proof_eqn_five}
    \sum_{i=1}^n \frac{A_i'\cap [0,r^{N-k})}{r^N} \subseteq \frac{B_n \cap [0,r^N)}{r^N} \subseteq \sum_{i=1}^n \frac{A_i'\cap [0,r^N)}{r^N},
\end{align}
where the sums indicate sumsets.
The goal now is to compare the discrete Hausdorff contents of each of these sets at scale $r^{-N}$.

By the definition of the set $X_i$, it follows from \cref{lem_haus_containment} that \begin{align}\label{lmp_proof_eqn_one}
    d_H\left( \frac{A_i'\cap [0,r^N)}{r^N}, X_i \right) \ll r^{-N},
\end{align}
which implies by \cref{lemma_hausdorff_near_implies_contents_near} that for all $\gamma \in [0,1]$,
\begin{align}\label{lmp_proof_eqn_two}
    \HC_{\geq r^{-N}}^\gamma \left( \sum_{i=1}^n \frac{A_i'\cap [0,r^N)}{r^N} \right) \asymp_n \HC_{\geq r^{-N}}^\gamma \big( Y_n \big).
\end{align}
It also follows from \eqref{lmp_proof_eqn_one} that
\begin{align*}
    d_H\left( \frac{A_i'\cap [0,r^{N-k})}{r^N}, \frac{X_i}{r^{k}} \right) \ll_n r^{-N},
\end{align*}
which implies by \cref{lemma_hausdorff_near_implies_contents_near} that
\begin{align}\label{lmp_proof_eqn_four}
    \HC_{\geq r^{-N}}^\gamma \left( \sum_{i=1}^n \frac{A_i'\cap [0,r^{N-k})}{r^N} \right) \asymp_n \HC_{\geq r^{-N}}^\gamma \left( \frac{Y_n}{r^k} \right).
\end{align}

Combining \eqref{lmp_proof_eqn_five} with \eqref{lmp_proof_eqn_two} and \eqref{lmp_proof_eqn_four}, we see that
\[\HC_{\geq r^{-N}}^\gamma \left( \frac{Y_n}{r^k} \right) \ll_n \HC_{\geq r^{-N}}^\gamma \left( \frac{B_n \cap [0,r^N)}{r^N} \right) = \frac{\HC_{\geq 1}^\gamma \left( B_n \cap [0,r^N) \right)}{r^{N \gamma}} \ll_n \HC_{\geq r^{-N}}^\gamma \big( Y_n \big).\]
Letting $N$ tend to infinity and noting that $n$, and hence $k$, are fixed, these inequalities combine with \cref{lemma_connection_between_discrete_and_unlimited_H_contents}, \cref{lem_discrete_dim_properties} \ref{itm_dim_IV}, \eqref{eqn_lmp_result_cts}, and the fact that $\dimh (Y_n / r^k) = \dimh Y_n$ to prove the equality in \eqref{more_than_sufficient_in_LMP}.
\end{proof}

\section{Open directions}
\label{sec_future}

We collect in this section a number of interesting open questions concerning multiplicatively invariant subsets of the non-negative integers.
Though these questions and conjectures are stated for arbitrary $\times r$-invariant subsets of $\Nz$, many are already open and interesting for the special case of base-$r$ restricted digit Cantor sets.

\subsection{Positive density for sumsets of full dimension}
\label{sec_pos_den_for_sumsets}

In \cite[Problem 4.10]{hochman_dimension_theory_2018},
Hochman asks whether the sumset $X+Y$ of a $\times r$- and a $\times s$-invariant subset of $[0,1]    $ satisfying $\dimh X+\dimh Y>1$ has positive Lebesgue measure.
We remark that a projection theorem of Marstrand \cite[Theorem I]{Marstrand_1954} implies that $\lambda X + \eta Y$ has positive Lebesgue measure for a.e.\ $(\lambda, \eta)\in\R^2$, suggesting a possible affirmative answer. 
In \cite[Theorem 1.4]{glasscock_marstrand_for_integers}, a version of Marstrand's projection theorem for subsets of the integers was obtained, with Lebesgue measure replaced by the notion of upper natural density.\footnote{Given a set $E\subseteq\Z$, its \define{upper natural density} is defined by $\bar d(E) \defeq \limsup_{N\to\infty}|E\cap\{-N,\dots,N\}|/(2N+1)$.}
It therefore makes sense to consider the following integer analogue of Hochman's question.

\begin{question}\label{question_posden}
Let $r,s\in\N$ be multiplicatively independent, and let $A,B\subseteq\N_0$ be $\times r$- and $\times s$-invariant, respectively.
If $\dimdm A+\dimdm B>1$, then does the sumset $A+B$ have positive upper natural density? 
\end{question}

\subsection{Small intersections}
\label{sec_small_intersections}

While \cref{question_posden} considers the sum $A+B$ when sum of the dimensions is larger than $1$, it is also natural to ask about the intersection $A\cap B$ when the sum of the dimensions is below $1$.
A special case of a conjecture posed by Furstenberg in \cite{furstenbergtransversality} asserts that if $r,s\in\N$ are multiplicatively independent and $X,Y\subseteq[0,1]$ are $\times r$- and $\times s$-invariant, respectively, then 
\begin{align*}
\dim X+\dim Y<1 \implies X\cap Y\subseteq\Q.
\end{align*}
Furstenberg showed that an affirmative answer to this question implies that any large enough power of $2$ contains every digit (in base $10$), which is a variant of the conjecture of Erd\H os \cite{erdosmathmag} mentioned in the introduction.

The following question is inspired by Furstenberg's conjecture.
\begin{question}
\label{question_finiteintersection}
Let $r,s\in\N$ be multiplicatively independent, and let $A,B\subseteq\Nz$ be $\times r$- and $\times s$-invariant, respectively.
Is it true that
\begin{align*}
\dim A+\dim B<1 \implies A\cap B~\text{is finite}?
\end{align*}
\end{question}
A special case of this question is formulated in \cite[Conjecture 6.2]{yu_additive_properties_restricted_digits_arxiv}.
If the answer to \cref{question_finiteintersection} is positive, then Erd\H os' conjecture holds (this can be seen by taking $r=2$, $s=3$, $A$ to be the powers of $2$, and $B$ to be a restricted digit Cantor set).
A weaker version of this statement was established by Lagarias \cite{lagariasternarypowersoftwo}.

One can formulate a natural quantitative strengthening of \cref{question_finiteintersection} as follows.
Given $n,r,k\in\N$, let $d_{r,k}(n)$ be the number of subwords of $(n)_r$ of length at most $k$. Then the answer to \cref{question_finiteintersection} is positive if one can show that
\begin{align}
\label{eqn_EQN}
\limsup_{k \to \infty}\ \liminf_{n\to\infty}\left(\frac{\log d_{r,k}(n)}{k\log r}+
\frac{\log d_{s,k}(n)}{k\log s}\right)
= 1.
\end{align}
In fact, it suffices to prove that the expression in \eqref{eqn_EQN} is greater than or equal to 1. Indeed, by considering $n$ to be a power of $r$, for any $k\in\N$, $\liminf_{n\to\infty} \log d_{r,k}(n) / k\log r = \log 2k / k\log r$, whereby the expression in \eqref{eqn_EQN} is at most 1.  
We believe the limit in \eqref{eqn_EQN} as $k$ tends to infinity exists, but this would not be necessary to imply a positive answer to \cref{question_finiteintersection}.

\subsection{Difference sets}
For closed subsets $X,Y\subseteq[0,1]$, working with the difference set $X-Y$ is no harder than working with the sumset $X+Y$.
In particular, proving that
\[
\dimm\big( X - Y \big) =\min{}\big(\dimm X + \dimm Y, \ 1 \big)
\]
in \cref{eqn_in_thm_HS_localentropy} requires no additional work.
The story changes in the setting of the non-negative integers, where difference sets are much more cumbersome to handle, ultimately because the fibers of the map $(a,b) \mapsto a-b$ are not compact.
This observation explains why our main results in the integer setting only deal with sumsets $\lambda A + \eta B$ with $\lambda$ and $\eta$ both positive, and it naturally leads us to the following question.

\begin{question}\label{question_differences}
Let $r$ and $s$ be multiplicatively independent positive integers, and let $A, B \subseteq \Nz$ be $\times r$- and $\times s$-invariant, respectively. 
Is it true that
\[\dimdm ( A - B  )  \,=\, \min \big(\dimdm A + \dimdm B, \ 1 \big)?\]
\end{question}

The methods used in \cref{sec_sumsintegers} allow us to establish the lower bound
$\ldimdm ( A - B  )  \geq \min (\dimdm A + \dimdm B, \ 1 ).$
However, the upper bound $\udimdm ( A - B  )  \leq \min (\dimdm A + \dimdm B, \ 1 )$, which is straightforward for sums, remains open for differences.

There are many natural variants and extensions of \cref{question_differences}: one can replace $A-B$ with a more general expression $\lfloor\lambda A+\eta B\rfloor$ for any non-zero real numbers $\lambda,\eta$, or one can replace $\dimdm$ with $\dimdh$. One can ask about combinations of the form $\lfloor\lambda A'+\eta B'\rfloor$ for arbitrary subsets $A'$ and $B'$ of $A$ and $B$, or one can look only at the positive portion $(A-B)\cap\N$ of the difference set.
Our methods provide an outline for obtaining lower bounds, but upper bounds seem to require a new strategy.

\subsection{Analogous results for other notions of discrete dimension}
The \emph{upper Banach dimension} (or \emph{upper counting dimension}, cf.\ \cite{limamoreira} and \cite{glasscock_marstrand_for_integers}) of a set $A \subseteq \Nz$ is 
\[\dim^*A \defeq \limsup_{N-M\to\infty}\frac{\log\big|A\cap[M,N]\big|}{\log(N-M)}.\]
In general, we only have the inequality $\dim^*A\geq\udimdm A$, but if $A\subseteq\Nz$ is $\times r$-invariant, then it can be shown that $\dimdm A=\dimdh A = \dim^* A$.

\begin{question}
Let $r$ and $s$ be multiplicatively independent positive integers, and let $A, B \subseteq \Nz$ be $\times r$- and $\times s$-invariant, respectively. 
Is it true that
\begin{align*}
    \dim^* ( A + B  )  \,&=\, \min \big(\dim^* A + \dim^* B, \ 1 \big), \text{ and / or}\\
    \dim^* ( A \cap B  )  \,&\leq\, \max \big(\dim^* A + \dim^* B - 1, \ 0 \big).
\end{align*}
\end{question}
Note that the lower bound $\dim^* ( A + B  ) \geq \min \big(\dim^* A + \dim^* B, \ 1 \big)$ follows from \cref{mainthm_integer_sumsets} using the fact that $\dim^* \geq\udimdm$.

There are several other ways to define natural notions of dimensions for subsets of $\Nz$.  
Barlow and Taylor \cite{barlow_taylor_92} define, for example, a discrete notion of packing dimension.  
The main results in this article suggest possible analogues for their discrete packing dimension.

\subsection{Polynomial functions of multiplicatively invariant sets}
\label{subsec_polynomial}

The dimension of the sumset of affine images of multiplicatively invariant sets $A$ and $B$ is described in \cref{mainthm_integer_sumsets}.
It is natural to ask about the extent to which the results in that theorem might hold for the sumset of images of $A$ and $B$ under other functions.

In this subsection, for $n \in \N$, denote by $A^{(n)}$ the set of $n^{\text{th}}$-powers of elements of $A$: $A^{(n)} \defeq \{a^n \ | \ a\in A\}$.
The following question is a (special case of a) natural polynomial extension of \cref{mainthm_integer_sumsets}. 

\begin{question}
\label{question_polynomials}
Let $n,m\in\N$, let $r,s\in\N$ be multiplicatively independent, and let $A,B\subseteq\Nz$ be $\times r$- and $\times s$-invariant, respectively.
Is it true that
\begin{align}
\label{eq_openproblemeq1}
\dimdm \big( A^{(n)} + B^{(m)} \big) &= \min\left(\frac1n\dimdm A+\frac1m\dimdm B\,,\,1\right)?
\end{align}
\end{question}

When $A=B=\Nz$, an affirmative answer to \cref{question_polynomials} follows from basic facts in number theory.  It is easy to see that for any $A\subseteq\Nz$ for which the Minkowski dimension exists, the set $A^{(n)}$ has dimension $\dimdm A^{(n)}=\dimdm A / n$ (however it is not true in general that $A^{(n)}$ is $\times r$-invariant when $A$ is). Thus, for arbitrary sets $A$ and $B$ that satisfy a natural dimension condition (see footnote \ref{dim_condition_footnote}), it follows from the discrete version of Marstrand's projection theorem in \eqref{marstrand_result_intro_integers} that $\dimdm \big( \lfloor \lambda A^{(n)} + \eta B^{(m)} \rfloor \big)$ is equal to the right hand side of \eqref{eq_openproblemeq1} for Lebesgue almost every $\lambda, \eta > 0$.

We cannot rule out the possibility that \eqref{eq_openproblemeq1} holds when $n, m \geq 2$ for arbitrary sets $A$ and $B$ for which the Minkowski dimensions exist.  When $A = B$ and $n = m = 2$, equality in \eqref{eq_openproblemeq1} is an infinitary version of a conjecture attributed to Ruzsa; see \cite[Conjecture 5]{Cilleruelo_Granville}.

\subsection{Multiplicatively invariant sets in relation to other arithmetic sets in the integers}

In this paper, we are concerned with transversality between $\times r$- and $\times s$-invariant sets whenever $r$ and $s$ are multiplicatively independent. 
In principle, it makes sense to inquire about transversality (or \emph{independence}) between any two sets which are structured in different ways.
To keep the discussion short, we restrict to infinite arithmetic progressions (or congruence classes), the set of perfect squares, and the set of primes.
\begin{question}
Let $A\subseteq\Nz$ be a $\times r$-invariant set, and let $P$ be an infinite arithmetic progression.
Is it true that $\dimdm(A\cap P)$ is either $0$ or $\dimdm(A)$?
\end{question}

The answer is yes for restricted digit Cantor sets. In fact, it is proved in \cite{erdos_mauduit_sarkozy_restricted_digit_cantor_sets} that such sets satisfy ``good equidistribution properties'' in residue classes.

More generally, one could ask about the sum or the intersection of a $\times r$-invariant set and the image of an arbitrary polynomial with integer coefficients, for instance the set of perfect squares, $S=\{n^2 \ | \ n\in\Nz\}$.  Note that $\dimdm S = 1/2$.

\begin{question}
Let $A\subseteq\Nz$ be a $\times r$-invariant set.
Is is true that
\[\dimdm \big( A + S  \big)  \,=\, \min \big(\dimdm A + 1/2, \ 1 \big)\]
and/or
\[\udimdm\big(A\cap S\big)\,\leq\, \max\big(\dimdm A-1/2, \ 0\big)?\]
\end{question}
Note that the first expression in this question is a special case of the equality in \cref{question_polynomials}.  In a similar vein, one can ask about intersections with the set of prime numbers, $\P$. Note that $\dimdm \P = 1$.

\begin{question}\label{question_primes}
Let $A\subseteq\Nz$ be a $\times r$-invariant set.
Is is true that $\dimdm(A\cap\P)$ is either $0$ or $\dimdm(A)$?
\end{question}

Maynard showed in \cite{maynard_primes_with_missing_digits} that the answer to \cref{question_primes} is positive when $A$ is a restricted digit Cantor set where the number of restricted digits is small enough with respect to the base. 
In fact, he obtains a Prime Number Theorem in such sets, which is stronger than simply $\dimdm (A \cap \mathbb{P}) = \dimdm A$. 
\cref{question_primes} is open for general restricted digit Cantor sets, and may be very difficult in general.
The methods in this paper do not appear to shed new light on this line of inquiry.

\subsection{Transversality of multiplicatively invariant sets in the \texorpdfstring{$rs$}{rs}-adics}

The $rs$-adics is a non-Archimedean regime in which it is easy to ask questions analogous to those asked in this work.  Furstenberg proved in \cite[Theorem 3]{furstenbergtransversality} an analogue of \cref{thm_furstenberg} in the $rs$-adics.

Following Furstenberg, note that the maps $\Phi_r$ and $\Phi_s$, with domains extended to $\Z$, are uniformly continuous with respect to the $rs$-adic metric on $\Z$, and therefore extend to continuous transformations of the set of $rs$-adic integers, $\Z_{rs}$.  As a compact metric space, there is a natural Hausdorff dimension to measure the size of subsets of $\Z_{rs}$.
Let us call a set $X \subseteq \Z_{rs}$ \define{$\times r$-invariant} if it is closed and $\Phi_r X \subseteq X$.

\begin{question}
Let $r$ and $s$ be multiplicatively independent positive integers, and let $X, Y \subseteq \Z_{rs}$ be $\times r$- and $\times s$-invariant sets, respectively. Is it true that
\begin{align*}
    \dimh \big(X + Y\big) &= \min{} \big( \dimh X + \dimh Y, \ \dimh \Z_{rs} \big), \text{ and / or}\\
    \dimh \big(X \cap Y \big) &\leq \max{} \big( \dimh X + \dimh Y - \dimh \Z_{rs},  0 \big)?
\end{align*}
\end{question}

The upper bound on $\dimh \big(X \cap Y \big)$ in the previous question was conjectured by Furstenberg in \cite[Conjecture 3]{furstenbergtransversality}.  A positive answer to these questions would bring transversality results in the $rs$-adics in line with those in the real and integer settings.

\bibliographystyle{alphanum}
\bibliography{integercantorbib}

\bigskip
\footnotesize
\noindent
Daniel Glasscock\\
\textsc{University of Massachusetts Lowell}\par\nopagebreak
\noindent
\href{mailto:daniel_glasscock@uml.edu}
{\texttt{daniel{\_}glasscock@uml.edu}}

\bigskip
\footnotesize
\noindent
Joel Moreira\\
\textsc{University of Warwick}\par\nopagebreak
\noindent
\href{mailto:joel.moreira@warwick.ac.uk}
{\texttt{joel.moreira@warwick.ac.uk}}

\bigskip
\footnotesize
\noindent
Florian K.\ Richter\\
\textsc{{\'E}cole Polytechnique F{\'e}d{\'e}rale de Lausanne (EPFL)}\par\nopagebreak
\noindent
\href{mailto:f.richter@epfl.ch}
{\texttt{f.richter@epfl.ch}}

\end{document}